\newcommand*\LyXoverline{\raisebox{2.6ex}{\_}}
\newcommand{\lyxmathsym}[1]{\ifmmode\begingroup\def\b@ld{bold}
  \text{\ifx\math@version\b@ld\bfseries\fi#1}\endgroup\else#1\fi}
\DeclareTextSymbolDefault{\textquotedbl}{T1}
\def\RSthmtxt{theorem~}\newref{thm}{name = \RSthmtxt}}
\def\RSlemtxt{lemma~}\newref{lem}{name = \RSlemtxt}}
\theoremstyle{plain}
\newtheorem{thm}{\protect\theoremname}[section]
\theoremstyle{definition}
\newtheorem{defn}[thm]{\protect\definitionname}
\theoremstyle{plain}
\newtheorem{cor}[thm]{\protect\corollaryname}
\theoremstyle{plain}
\newtheorem{question}[thm]{\protect\questionname}
\theoremstyle{plain}
\newtheorem{lem}[thm]{\protect\lemmaname}
\theoremstyle{plain}
\newtheorem{prop}[thm]{\protect\propositionname}
\theoremstyle{remark}
\newtheorem*{acknowledgement*}{\protect\acknowledgementname}
\theoremstyle{remark}
\newtheorem{notation}[thm]{\protect\notationname}
\theoremstyle{plain}
\newtheorem*{fact*}{\protect\factname}
\theoremstyle{remark}
\newtheorem{rem}[thm]{\protect\remarkname}
\date{}
\def\moverlay{\mathpalette\mov@rlay}
\def\mov@rlay#1#2{\leavevmode\vtop{%
   \baselineskip\z@skip \lineskiplimit-\maxdimen
   \ialign{\hfil$\m@th#1##$\hfil\cr#2\crcr}}}
\newcommand{\charfusion}[3][\mathord]{
    #1{\ifx#1\mathop\vphantom{#2}\fi
        \mathpalette\mov@rlay{#2\cr#3}
      }
    \ifx#1\mathop\expandafter\displaylimits\fi}
\providecommand{\acknowledgementname}{Acknowledgement}
\providecommand{\corollaryname}{Corollary}
\providecommand{\definitionname}{Definition}
\providecommand{\factname}{Fact}
\providecommand{\lemmaname}{Lemma}
\providecommand{\notationname}{Notation}
\providecommand{\propositionname}{Proposition}
\providecommand{\questionname}{Question}
\providecommand{\remarkname}{Remark}
\providecommand{\theoremname}{Theorem}
\begin{document}
\title{Lifting Generators in Connected Lie Groups}
\author{Tal Cohen\thanks{Weizmann Institute of Science, tal.cohen@weizmann.ac.il.}
~and Itamar Vigdorovich\thanks{University of California San Diego, ivigdorovich@ucsd.edu.}\\
\emph{}\\
\emph{In memory of Gennady A. Noskov}\\
}

\maketitle
\global\long\def\GasC#1#2{\text{\lightning}_{#1}(#2)}%

\global\long\def\gas#1{\text{\lightning}^{*}(#1)}%

\global\long\def\Gas#1{\text{\lightning}(#1)}%

\global\long\def\act{\curvearrowright}%

\global\long\def\res{\!\restriction}%

\global\long\def\tr#1{\mathrm{d}(#1)}%

\global\long\def\sr#1{\rho(#1)}%

\global\long\def\conn#1{#1^{0}}%

\global\long\def\explain#1#2{\underset{\underset{\mathclap{#2}}{\downarrow}}{#1}}%

\global\long\def\bbm#1{\boldsymbol{#1}}%

\global\long\def\N{\mathbb{N}}%
\global\long\def\Z{\mathbb{Z}}%
\global\long\def\Q{\mathbb{Q}}%
\global\long\def\R{\mathbb{R}}%
\global\long\def\C{\mathbb{C}}%
\global\long\def\H{\mathbb{H}}%
\global\long\def\T{\mathbb{T}}%
\global\long\def\P{\mathbb{P}}%

\begin{abstract}
Given an epimorphism between topological groups $f:G\to H$, when
can a generating set of $H$ be lifted to a generating set of $G$? 

We show that for connected Lie groups the problem is fundamentally
abelian: generators can be lifted if and only if they can be lifted
in the induced map between the abelianisations (assuming the number
of generators is at least the minimal number of generators of $G$).
As a consequence, we deduce that connected perfect Lie groups satisfy
the Gasch\"utz lemma: generating sets of quotients can always be lifted.
If the Lie group is not perfect, this may fail. The extent to which
a group fails to satisfy the Gasch\"utz lemma is measured by its\emph{
Gasch\"utz rank}, which we bound for all connected Lie groups, and compute
exactly in most cases. Additionally, we compute the maximal size of
an irredundant generating set of connected abelian Lie groups, and
discuss connections between such generation problems with the Wiegold
conjecture. 

\end{abstract}

\section{Introduction}

Many problems and results in group theory are concerned with generating
sets of groups, and with lifting generators through quotients. A classical
example is the Gasch\"utz lemma, which states the following: given an
epimorphism between finite groups $f:G\to H$, every generating set
$h_{1},...,h_{n}$ of $H$ can be lifted via $f$ to a generating
set $g_{1},...,g_{n}$ of $G$, provided that $G$ can be generated
by $n$ elements. This result has proved to be very useful in the
study of finite groups and the automorphism group of the free group
$\mathrm{Aut}(F_{n})$. Furthermore, its extension to profinite groups
serves as a key tool in the study of presentations of groups, subgroup
growth, and the Wiegold conjecture (which we further discuss below),
see \cite{Lub01,LS12,GL09,BCL22}.

Generator lifting problems may be studied for general topological
groups. In such case, generation is understood in the topological
sense: a subset $S$ of a topological group $G$ is said to \emph{generate}
$G$ if the group abstractly generated by $S$ (which is denoted by
$\left\langle S\right\rangle $) is dense in $G$. We denote by $\tr G$
the minimal size of a generating set of $G$. 

Henceforward, we assume all homomorphisms between topological groups
are continuous. By an epimorphism, we mean a surjective (continuous)
homomorphism. It will be useful \textit{not }to assume by default
that topological groups are Hausdorff; rather, we will explicitly
indicate whenever the Hausdorff property is required. 

Following \cite{CG18}, we define:
\begin{defn}
\label{def:Gaschutz-group}A topological group $G$ is said to be
\emph{Gasch\"utz} if, whenever $f:G\to H$ is an open\footnote{The restriction to open maps is important when dealing with topological
groups which admit weaker group topologies. For more on weak topologies,
see \cite{DS91,BL22}.} epimorphism onto a Hausdorff topological group $H$, and $h_{1},...,h_{n}\in H$
generate $H$ with $n\geqslant\tr G$, there exist lifts $g_{1},...,g_{n}\in G$
(i.e., elements satisfying $f(g_{i})=h_{i}$) that generate $G$.
\end{defn}

In the terminology of Definition \ref{def:Gaschutz-group}, the Gasch\"utz
lemma states that finite groups are Gasch\"utz \cite{Gas55}. In \cite{CG18},
it is shown that all first-countable\footnote{It is in fact enough to assume the connected component of the identity
is first-countable.} compact groups are Gasch\"utz as well. Our first result welcomes another
class of groups to this family:
\begin{thm}
\label{thm:intro-perfect-groups}Every perfect connected Lie group
is Gasch\"utz.
\end{thm}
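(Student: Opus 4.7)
The plan is to deduce Theorem~\ref{thm:intro-perfect-groups} from the main abelianisation theorem of the paper, announced in the abstract: for any open epimorphism $f\colon G\to H$ and any $n\geq\tr{G}$, a generating tuple $(h_1,\dots,h_n)$ of $H$ lifts through $f$ to topological generators of $G$ if and only if its image in $H^{\mathrm{ab}}$ lifts through the induced map $G^{\mathrm{ab}}\to H^{\mathrm{ab}}$ to topological generators of $G^{\mathrm{ab}}$. When $G$ is perfect, $G^{\mathrm{ab}}$ is the trivial group, so any $n$ elements of $G^{\mathrm{ab}}$ ``generate'' it; the abelianisation hypothesis is then vacuous and the theorem follows.

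Two small checks verify the reduction applies. Since $H$ is Hausdorff and $f$ is open and continuous, $N:=\ker f$ is a closed normal Lie subgroup of $G$ and $f$ descends to a topological isomorphism $G/N\cong H$, so $H$ is itself a connected Lie group. Moreover $H$ is topologically perfect: because $\overline{[G,G]}=G$ and $f$ is continuous and surjective,
\[
H = f(G) = f\bigl(\overline{[G,G]}\bigr) \subseteq \overline{f([G,G])} \subseteq \overline{[H,H]},
\]
so $\overline{[H,H]}=H$. Thus both abelianisations (taken as the Hausdorff quotients $G/\overline{[G,G]}$ and $H/\overline{[H,H]}$) are trivial, and the abelianisation side of the biconditional really is the trivial statement ``the all-identity tuple lifts to the all-identity tuple''.

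The substance, of course, lies in the main theorem rather than in this corollary. To prove it directly in the perfect case, I would fix any initial lift $(\tilde g_1,\dots,\tilde g_n)$ of $(h_1,\dots,h_n)$ and study the ``bad'' set of $(\eta_1,\dots,\eta_n)\in N^n$ for which $(\tilde g_1\eta_1,\dots,\tilde g_n\eta_n)$ fails to topologically generate $G$. This set decomposes as a union, indexed by closed Lie subgroups $K\subsetneq G$ satisfying $K\cdot N=G$, of proper analytic subvarieties of $N^n$. Perfectness of $G$ forces every such $K$ to have Lie algebra of strictly smaller dimension than $\mathfrak{g}$; the main obstacle is then a Baire-category / stratification argument showing the resulting union is meagre, which requires controlling the (finite-dimensional) parameter families of ``filling'' Lie subalgebras of $\mathfrak{g}$ via a Levi decomposition.
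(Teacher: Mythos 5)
Your deduction from the abelianisation theorem is exactly the paper's proof of this statement: when $G$ is perfect, $G/\overline{G'}$ is trivial, hence $f(\overline{G'})=f(G)=H$ and $H/f(\overline{G'})$ is trivial, so the abelianisation-side lifting condition in Theorem~\ref{thm:intro-main} holds vacuously and the result follows. The paper explicitly states that Theorem~\ref{thm:intro-perfect-groups} is an immediate consequence of Theorem~\ref{thm:intro-main}, and your first two paragraphs flesh out that reduction correctly; the checks that $H$ is a connected Lie group and that $H$ is itself topologically perfect are fine, though the latter is not actually required --- all that matters is that the domain $G/\overline{G'}$ of $f^{\mathrm{ab}}$ is trivial.

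The speculative third paragraph is not how the paper proves the main theorem, and as a direct argument it has gaps. First, the remark that ``perfectness forces every proper closed Lie subgroup $K$ to have strictly smaller Lie algebra'' does not use perfectness at all: any proper closed subgroup of a connected Lie group already has strictly smaller dimension, since equal dimension plus closedness would force $K^{0}=G$. Second, the proposed Baire-category/stratification argument over closed subgroups $K$ with $KN=G$ is not developed enough to be convincing: there is no natural countable or finite-dimensional family exhausting all proper closed subgroups, and the bad set of $(\eta_1,\dots,\eta_n)\in N^n$ is not obviously a countable union of analytic subvarieties. The paper's actual route is quite different: for semisimple $G$ it constructs explicit word maps landing in a strongly Zassenhaus neighbourhood and invokes a diagonal-embedding lemma of Gelander and Le Ma\^{i}tre (Lemma~\ref{lem:lem} and Lemma~\ref{lem:semi-simple-main-lemma}), and for the general case it passes to Abels--Noskov groups and uses the explicit module-theoretic description of generating tuples from \cite{AN24} (Sections~\ref{sec:Reductive}--\ref{sec:GeneralLie}). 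None of this resembles a meagreness argument. As a proof of the stated corollary, however, your first two paragraphs suffice and match the paper.
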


This includes, of course, all connected semisimple Lie groups. The
assumption that $G$ is perfect (i.e., that the derived subgroup $G'$
of $G$ is dense) is essential. In fact, any connected abelian Lie
group which is not compact and not isomorphic to $\R$ is not Gasch\"utz
(see \propref{lowerbnd}). We demonstrate that this is the only source
of difficulty, in the sense that the challenge of lifting generators
is ultimately reduced to the abelian case. This is the content of
our main theorem, from which Theorem \ref{thm:intro-perfect-groups}
is an immediate consequence.
\begin{thm}
\label{thm:intro-main}Let $f:G\to H$ an open epimorphism between
connected Lie groups. Let $f^{\mathrm{ab}}:G/\overline{G'}\to H/f\left(\overline{G'}\right)$
be the map\footnote{Observe that $f\left(\overline{G'}\right)$ might not be closed in
$H$, so $H/f\left(\overline{G'}\right)$ might not be Hausdorff.} induced by $f$. If $h_{1},\dots,h_{n}\in H$ generate $H$ with
$n\geqslant\tr G$, then they admit generating lifts to $G$ via $f$
if and only if $h_{1}f\left(\overline{G'}\right),\dots,h_{n}f\left(\overline{G'}\right)$
admit generating lifts to $G/\overline{G'}$ via $f^{\mathrm{ab}}$.
\end{thm}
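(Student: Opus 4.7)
\emph{Proof sketch.} The implication $(\Rightarrow)$ is immediate: if $g_1,\ldots,g_n$ topologically generate $G$ and satisfy $f(g_i)=h_i$, then the projections $g_i\overline{G'}$ topologically generate $G/\overline{G'}$ and satisfy $f^{\mathrm{ab}}(g_i\overline{G'})=h_if(\overline{G'})$.

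For the converse, suppose $\bar g_1,\ldots,\bar g_n\in G/\overline{G'}$ topologically generate $G/\overline{G'}$ with $f^{\mathrm{ab}}(\bar g_i)=h_if(\overline{G'})$. The first step is to produce a preliminary lift in $G$: take any $g_i^0\in G$ with $g_i^0\overline{G'}=\bar g_i$, and note that $f(g_i^0)$ and $h_i$ lie in the same coset of $f(\overline{G'})$, so one can select $d_i\in\overline{G'}$ with $f(d_i)=h_i^{-1}f(g_i^0)$; then $g_i:=g_i^0d_i^{-1}$ satisfies $f(g_i)=h_i$ and still projects to $\bar g_i$. In particular the $g_i$ topologically generate $G$ modulo $\overline{G'}$, though not necessarily $G$ itself.

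The plan from here is to perturb each $g_i$ by an element $c_i\in\overline{G'}\cap\ker f$: such a perturbation preserves both $f(g_i)=h_i$ and the projection $g_i\overline{G'}=\bar g_i$. The problem therefore reduces to showing that one can select $c_1,\ldots,c_n\in\overline{G'}\cap\ker f$ such that $\overline{\langle g_1c_1,\ldots,g_nc_n\rangle}=G$, given that $\overline{\langle g_1,\ldots,g_n\rangle}$ already surjects onto $G/\overline{G'}$.

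The principal obstacle is this final perturbation step, where the hypothesis $n\geq\tr G$ is used decisively. I would handle it by an inductive reduction along a descending normal series of connected closed subgroups of $G$, such as the derived or lower central series, at each stage of which the obstruction to generation lies in an abelian quotient; Theorem~\ref{thm:intro-perfect-groups} on perfect connected Lie groups provides a natural base case, covering the situation in which no abelian layers remain. The delicate technical point will be to verify at each step that the required correction can be found inside $\overline{G'}\cap\ker f$, and this I expect to depend crucially on the openness of $f$, which endows $f|_{\overline{G'}}$ with enough flexibility on its kernel to absorb the adjustments forced by each successive abelian layer of the series.
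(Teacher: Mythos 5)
The forward direction and the preliminary lift step (adjusting an arbitrary lift $g_i^0$ of $\bar g_i$ by an element of $\overline{G'}$ so that $f(g_i)=h_i$) are fine. The converse as proposed, however, has two substantive problems.

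First, there is a circularity. You invoke Theorem~\ref{thm:intro-perfect-groups} (perfect connected Lie groups are Gasch\"utz) as a base case, but in the paper that theorem is stated as an \emph{immediate consequence} of Theorem~\ref{thm:intro-main} --- the very statement you are proving. What the paper establishes independently is only the semisimple case (Corollary~\ref{cor:SsCase}); perfect but non-semisimple connected Lie groups such as $\mathrm{SL}_2(\mathbb{R})\ltimes\mathbb{R}^2$ are not covered by any self-contained argument you can lean on.

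Second, the proposed inductive reduction along a derived or lower central series is left entirely schematic, and there is little reason to expect it to succeed as sketched. The paper's route is genuinely different: it passes to the Frattini quotient via the Abels--Noskov structure theorem (every connected Lie group has a non-generating normal subgroup $B$ so that $G/B$ is finitely covered by a group of the form $(S\times A)\ltimes_\rho V$ with $\rho$ completely reducible and fixed-point-free), then uses the precise module-theoretic criterion for generating tuples of such groups (Theorem~\ref{thm:gen-of-AN}) together with Lemma~\ref{lem:AbsGas}, which lets one divide out non-generating normal subgroups on both sides at once. The real work --- and the place where $n\geqslant\tr G$ is actually consumed --- happens in Lemmas \ref{lem:submodule-case} and \ref{lem:trivial-case}, where $\ker f$ is split against the $L$-module $V$ and isotypic multiplicities are counted. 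A blind induction over a derived series gives no access to this: the ``abelian layers'' in such a series do not decouple, because the crux of the problem is precisely the interaction between the abelianisation and the unipotent radical under the $L$-action, which is what the Abels--Noskov decomposition isolates. So the proposal identifies the right target (perturb inside the kernel to upgrade a ``mod $\overline{G'}$ and mod $\ker f$'' generating tuple to a genuine one) but supplies no mechanism for carrying it out. As a minor point, constraining the perturbation to lie in $\overline{G'}\cap\ker f$ is also unnecessarily restrictive; you may perturb in all of $\ker f$, since any resulting generating tuple of $G$ automatically projects to some generating lift along $f^{\mathrm{ab}}$, not necessarily the original $\bar g_i$.
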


We are thus left with understanding the abelian case. As mentioned
above, even $\mathbb{R}\times\mathbb{R}$ is not Gasch\"utz: it is generated
by $3$ elements but the quotient $\mathbb{R}\times\mathbb{R}/\mathbb{Z}$
admits $3$ generators that cannot be lifted. However, every $4$
generators can be lifted. We therefore introduce the following notion.
\begin{defn}
\label{def:gaschutz-rank}Let $f:G\to H$ be an epimorphism between
topological groups and let $n\in\mathbb{N}$. We say that $f$ is
$n$-Gasch\"utz if for every $n$ elements $h_{1},...,h_{n}\in H$ that
generate $H$ there are lifts $g_{1},...,g_{n}\in G$ that generate
$G$. 

The \emph{Gasch\"utz rank }of $G$, denoted $\Gas G$, is the minimal
$m\in\mathbb{N}\cup\left\{ \infty\right\} $ such that every open
epimorphism $f:G\to H$ onto a Hausdorff topological group $H$ is
$n$-Gasch\"utz for every $n\geqslant m$.
\end{defn}

Interestingly, we do not know if every $n$-Gasch\"utz map is necessarily
$(n+1)$-Gasch\"utz. This seems to be an interesting question.

Obviously, $\tr G\leqslant\Gas G$ holds for all groups. By definition,
a group is Gasch\"utz precisely when the equality $\tr G=\Gas G$ holds.
The abelian case is settled by the following.
\begin{thm}
\label{thm:into-abel}If $A$ is a connected abelian Lie group with
maximal torus $T$, then 
\[
\Gas A=\begin{cases}
1 & A\text{ is compact,}\\
2\dim A-\dim T & A\text{ is non-compact.}
\end{cases}
\]
\end{thm}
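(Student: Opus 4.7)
I treat the compact and non-compact cases separately.

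When $A=T^{b}$ is compact, every open epimorphism lands in a quotient torus $T^{c}$, and a topological (Kronecker) generator $v\in\R^{c}$ (meaning $1,v_{1},\ldots,v_{c}$ are $\Q$-linearly independent) lifts to a topological generator of $T^{b}$ by extending to a $\Q$-linearly independent family in $\R^{b}$. Hence $\Gas A=1$, which also follows from the Gasch\"utz property of compact connected Lie groups proved in \cite{CG18}.

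For the non-compact case $A=\R^{a}\times T^{b}$ with $a\geq 1$, I pass to the Pontryagin dual: $g_{1},\ldots,g_{n}\in A$ generate $A$ iff the map $\hat A=\R^{a}\times\Z^{b}\to\T^{n}$, $\chi\mapsto(\chi(g_{i}))_{i}$, is injective. For an open epi $f:A\to H$ with generators $h_{1},\ldots,h_{n}\in H$, $\hat H\hookrightarrow\hat A$ is closed and the restriction $\hat H\to\T^{n}$ is injective; choosing lifts of the $h_{i}$'s is equivalent to extending this injection to an injective $\hat A\to\T^{n}$. For the \emph{upper bound} $\Gas A\leq 2a+b$: obstructions to injectivity are characters in $\hat A\setminus\hat H$ which pair non-trivially with $K=\ker f$, and a parameter-counting argument (Baire-category when $K$ has a continuous identity component, and an arithmetic count on the integer lattice when $K$ is discrete, as in the tight case $K=\Z^{a}$ for the map $f:A\to T^{a+b}$) shows that $n\geq 2a+b$ parameters suffice to simultaneously avoid all obstructions.

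For the \emph{lower bound} $\Gas A\geq 2a+b$, take the quotient $f:\R^{a}\times T^{b}\to T^{a+b}$ killing $\Z^{a}\subset\R^{a}$ (identity on $T^{b}$), so lifts have only $\Z^{a}$-freedom in the $\R^{a}$-components. Construct $h_{1},\ldots,h_{2a+b-1}$ generating $T^{a+b}$ whose first $a$ coordinates are arranged with enough rational structure that every choice of integer shifts leaves the $\R^{a}$-projections of the lifts in a proper $\Q$-affine subspace of $\R^{a}$, obstructing topological generation of $A$. The remaining ``irrational'' coordinates and the $T^{b}$-components are used to guarantee that the $h_{i}$'s generate $T^{a+b}$.

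\emph{Main obstacle.} The delicate step is the explicit lower-bound configuration: one must balance generating $T^{a+b}$ (demanding enough irrational ``Kronecker''-style coordinates) against blocking every lift (demanding enough rational coordinates to force every integer-shifted configuration into a rationally-deficient subspace of $\R^{a}$), so that the critical threshold is exactly $n=2a+b-1$. The upper bound is a generic-choice argument in the dual picture that is routine in the continuous-$K$ case but requires an arithmetic count for discrete kernels.
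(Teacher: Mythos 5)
Your compact case is correct and matches the paper (cite \cite{CG18} and the fact $\tr{(\T^b)}=1$). Your Pontryagin-dual reformulation of the non-compact case is valid and is genuinely different in spirit from the paper's direct lattice/torus approach; but as written it leaves precisely the two nontrivial steps unproved, and in both places the difficulty is greater than the sketch suggests.

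For the upper bound, the phrase ``a parameter-counting argument shows that $n\geq 2a+b$ parameters suffice'' conceals the entire content of the theorem. In the dual picture you must extend an injective homomorphism $\hat H\to\T^n$ to an injective homomorphism $\hat A\to\T^n$, and the obstruction characters $\chi\in\hat A\setminus\hat H$ with $\chi(g_i)=1$ for all $i$ form an \emph{uncountable} family; a Baire-category argument does not immediately apply, and the discrete-kernel case (lifting freedom $\Z^a$ per generator) is not a ``routine arithmetic count.'' The paper handles exactly this difficulty in two steps that have no counterpart in your sketch: first lift a full-rank lattice using a determinant-continuity argument with a large integer scaling $N$ (the paper's Lemma~\ref{lem:lifting-basis}), then correct the remaining generators inside a torus by elements of a dense finitely generated subgroup after passing to finite-index connected pieces (Lemma~\ref{lem:correction}). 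These are the specific mechanisms that make the count $2\dim A-\dim T$ come out; your proposal does not supply a replacement.

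For the lower bound you explicitly flag the construction as the ``main obstacle'' and do not carry it out. The paper's Proposition~\ref{prop:lowerbnd} produces, for $G=\R^{n+1}\times\T^m$ mapping to $(\R^n\times\T^m)\times\T$ (killing a single $\Z$, not all of $\Z^a$ as you suggest), an explicit matrix of $\sqrt 2$'s and then verifies non-liftability by Kronecker's theorem via a system of $2n+2m$ homogeneous linear equations in $2n+2m+1$ rational unknowns, which always has a nontrivial solution. This exact balance between ``enough irrational entries to generate the quotient'' and ``an underdetermined rational system blocking every integer shift'' is the argument; calling it delicate and deferring it leaves the lower bound unestablished.

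In short: the dual framework is a legitimate and arguably cleaner language for the problem, but both directions of the bound are stated as goals rather than proved, and the steps you describe as routine are where the real work is.
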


We are thus able to estimate the Gasch\"utz rank of arbitrary connected
Lie groups.
\begin{cor}
\label{intro-cor:gashutz rank}Let $G$ be a connected Lie group.
Then 
\[
\Gas G\leq\max\{\tr G,2\dim(G/\overline{G'})-\dim T\}
\]
where $T$ is the maximal torus of the abelianisation $G/\overline{G'}$.
If $G/\overline{G'}$ is non-compact, then equality holds.
\end{cor}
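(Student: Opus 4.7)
The plan is to derive both inequalities by combining Theorem~\ref{thm:intro-main}, which reduces every lifting problem on $G$ to the analogous problem on the abelianisation $A:=G/\overline{G'}$, with Theorem~\ref{thm:into-abel}, which computes $\Gas A$.

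For the upper bound, I take an open epimorphism $f:G\to H$ onto a Hausdorff group $H$ and a generating tuple $h_1,\ldots,h_n\in H$ with $n\ge\max\{\tr G,\,2\dim A-\dim T\}$. Since $H$ is Hausdorff, $\ker f$ is closed and $H$ is automatically a connected Lie group, so Theorem~\ref{thm:intro-main} applies: the task reduces to finding $a_1,\ldots,a_n\in A$ that generate $A$ and satisfy $f^{\mathrm{ab}}(a_i)=h_i f(\overline{G'})$. Since $A$ is a connected abelian Lie group with maximal torus $T$, Theorem~\ref{thm:into-abel} gives $\Gas A\le 2\dim A-\dim T$ (with room to spare in the compact case, where $\Gas A=1\le \dim T=2\dim A-\dim T$ whenever $A$ is non-trivial). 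This will produce the required lift, subject to the technical caveat addressed below.

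For the lower bound, suppose $A$ is non-compact, so Theorem~\ref{thm:into-abel} gives $\Gas A=2\dim A-\dim T$. Unwinding Definition~\ref{def:gaschutz-rank}, there exist an open epimorphism $f_A:A\to B$ onto a Hausdorff group $B$ and an integer $n\ge 2\dim A-\dim T-1$ together with a generating $n$-tuple $b_1,\ldots,b_n$ of $B$ that admits no generating lift to $A$. Composing with the abelianisation map $\pi:G\to A$ yields an open epimorphism $f:=f_A\circ\pi:G\to B$; since $f(\overline{G'})=f_A(\pi(\overline{G'}))=\{e\}$ the induced map $f^{\mathrm{ab}}$ coincides with $f_A$. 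When $n\ge\tr G$ — automatic in the only non-trivial regime $2\dim A-\dim T>\tr G$ — Theorem~\ref{thm:intro-main} transfers the failure from $A$ to $G$, giving $\Gas G\ge n+1\ge 2\dim A-\dim T$. In the complementary regime $2\dim A-\dim T\le\tr G$ the trivial bound $\Gas G\ge\tr G$ already matches the claim, so equality holds throughout the non-compact case.

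The main obstacle is the non-Hausdorffness of the codomain of $f^{\mathrm{ab}}$ in the upper bound, which blocks a direct invocation of Theorem~\ref{thm:into-abel}. I plan to resolve this either by verifying that the proof of Theorem~\ref{thm:into-abel} extends verbatim to arbitrary open epimorphisms from a connected abelian Lie group, or by first factoring through the Hausdorffication $H/\overline{f(\overline{G'})}$ to produce generating lifts $a_i\in A$ whose images there agree with $h_i\overline{f(\overline{G'})}$, and then adjusting the $a_i$ by elements of $\ker f^{\mathrm{ab}}\subseteq\overline{\ker f^{\mathrm{ab}}}=\ker(A\to H/\overline{f(\overline{G'})})$ to satisfy $f^{\mathrm{ab}}(a_i)=h_i f(\overline{G'})$ on the nose. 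The delicate step is ensuring the adjustment preserves the generating property; I expect this to follow from the flexibility built into the choice of generating lifts in Theorem~\ref{thm:into-abel}, together with the density of $\ker f^{\mathrm{ab}}$ in its closure.
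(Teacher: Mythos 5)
Your reduction strategy is the same as the paper's: pass to the abelianisation via \thmref{intro-main} and then import the abelian computation. But for the upper bound the proposal stops precisely where the real work begins. You correctly identify the obstruction --- the codomain $H/f\left(\overline{G'}\right)$ of $f^{\mathrm{ab}}$ need not be Hausdorff, so \thmref{into-abel} does not directly apply --- but neither of your proposed fixes is carried out, and the first is not true as stated. A ``verbatim'' extension of \thmref{into-abel} to non-Hausdorff targets would give $\Gas A=1$ for a torus $A=\mathbb{T}^{n}$, yet \lemref{GasStarofaTorus} exhibits a non-Hausdorff quotient of $\mathbb{T}^{n}$ in which $n-1$ generators fail to lift, so the non-Hausdorff analogue of the rank of a torus is $n$, not $1$. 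The correct non-Hausdorff statement is \propref{AbelianCase}, which gives the bound $2\dim A-\dim T$ for a not-necessarily-closed kernel $\Delta$ under the additional hypothesis that $\Delta$ be topologically finitely generated. That hypothesis is the missing piece you must supply: it holds here because $\ker f$ is closed in $G$ (as $H$ is Hausdorff), hence topologically finitely generated by \lemref{NormalisFG}, and $\ker f^{\mathrm{ab}}$ is its image in $G/\overline{G'}$. The paper packages exactly this into the final clause of \thmref{main-thm}, which you should invoke in place of \thmref{into-abel}. Your second plan (adjusting lifts coming from the Hausdorffication by dense kernel elements while preserving generation) is not a side remark one can hope to wave through; it is precisely the technical content of \lemref{correction}, the hardest step of Section~\ref{sec:Abelian-groups}.

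Your lower-bound argument is correct but circuitous. The direction of \thmref{intro-main} you actually use --- failure to lift to $A=G/\overline{G'}$ implies failure to lift to $G$ --- is the trivial direction and holds without the hypothesis $n\geqslant\tr G$, so the case split on that inequality is superfluous. The cleaner route, taken in \secref{GasRank}, is the monotonicity $\Gas G\geqslant\Gas Q$ for any Hausdorff quotient $Q$ of $G$ (a generating tuple in a target of $Q$ that fails to lift to $Q$ a fortiori fails to lift to $G$ through the composed epimorphism), applied with $Q=G/\overline{G'}$ together with \thmref{into-abel}.
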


The value $\tr G$ of a connected Lie group $G$ was computed explicitly
in \cite{AN24}; see Section \ref{sec:AN} for details. Thus, Corollary
\ref{intro-cor:gashutz rank} provides an explicit bound for the Gasch\"utz
rank. It moreover provides a precise value for the Gasch\"utz rank for
most connected Lie groups. Indeed, the only case which remains undetermined
is when $G$ is not compact, $G/\overline{G'}$ is compact, and in
addition $\dim G/\overline{G'}>\tr G$. The nature of this unique
situation is quite subtle. By Corollary \ref{intro-cor:gashutz rank},
$\Gas G$ is at most $\max\{\tr G,2\dim(G/\overline{G'})-\dim T\}$
and at least $\tr G$; both ends of the spectrum are possible. See
\secref{GasRank} for more details.

Corollary \ref{intro-cor:gashutz rank} in particular states that
the Gasch\"utz rank of any connected Lie group is finite. We note that
this is not true for general groups. Indeed, there are connected locally
compact groups (e.g., $G=\mathbb{R}\times(\mathbb{R}/\mathbb{Z})^{\aleph_{0}}$)
with $\tr G<\infty$ but $\Gas G=\infty$ . However, we do not know
the answer to the following question:
\begin{question}
Does there exist a finitely generated discrete group with infinite
Gasch\"utz rank?
\end{question}

The latter is equivalent to asking whether the free group $F_{n}$
on $n$ generators has a finite Gasch\"utz rank for every $n$. 

\subparagraph{The Wiegold Conjecture.}

The Gasch\"utz rank of an epimorphism from $F_{n}$ onto a finite group
is related to the Wiegold conjecture, which we now recall. For a discrete
group $G$, denote by $\mathcal{G}_{n}(G)$ the space of epimorphisms
$F_{n}\to G$, or equivalently the space of generating $n$-tuples
of $G$. We have a natural action of $\mathrm{Aut}(F_{n})$ on $\mathcal{G}_{n}(G)$
by precomposition. The Wiegold conjecture states that the action of
$\mathrm{Aut}(F_{3})$ on $\mathcal{G}_{3}(G)$ is transitive for
every finite simple group $G$. A stronger conjecture, first formulated
as a question by Pak \cite[Question 2.4.6]{Pak01}, states that the
action of $\mathrm{Aut}(F_{n})$ on $\mathcal{G}_{n}(G)$ is transitive
whenever $G$ is a finite group that can be generated by $n-1$ elements.
See \cite{Lub11} for more details.

It is easy to see the following:
\begin{lem}
Let $G$ be a discrete group. The action of $\mathrm{Aut}(F_{n})$
on $\mathcal{G}_{n}(G)$ is transitive if and only if every epimorphism
$F_{n}\to G$ is $n$-Gasch\"utz.
\end{lem}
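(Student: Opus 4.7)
The plan is to identify $\mathcal{G}_n(G)$ with $\mathrm{Epi}(F_n,G)$ via $f\mapsto(f(x_1),\dots,f(x_n))$, where $x_1,\dots,x_n$ is the standard basis of $F_n$, so that the $\mathrm{Aut}(F_n)$-action becomes precomposition $\phi\cdot f=f\circ\phi$. The whole statement then rests on one classical fact, which I would state up front: any $n$ elements that generate $F_n$ form a free basis (equivalently, surjective endomorphisms of $F_n$ are automorphisms, since finitely generated free groups are Hopfian). Under this identification, a tuple $(w_1,\dots,w_n)\in F_n^n$ generates $F_n$ if and only if there exists $\phi\in\mathrm{Aut}(F_n)$ with $\phi(x_i)=w_i$.

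For the forward direction, I would assume the action of $\mathrm{Aut}(F_n)$ on $\mathcal{G}_n(G)$ is transitive and let $f\colon F_n\to G$ be an arbitrary epimorphism with a generating tuple $(h_1,\dots,h_n)$ in $G$. Define the epimorphism $f'\colon F_n\to G$ by $f'(x_i)=h_i$. By transitivity there is $\phi\in\mathrm{Aut}(F_n)$ with $f'=f\circ\phi$, so $w_i:=\phi(x_i)$ satisfies $f(w_i)=h_i$, and the $w_i$ generate $F_n$ because $\phi$ is an automorphism. Hence $f$ is $n$-Gasch\"utz.

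For the converse, I would assume every epimorphism $F_n\to G$ is $n$-Gasch\"utz and take two epimorphisms $f,f'\in\mathcal{G}_n(G)$. Letting $h_i:=f'(x_i)$, the $n$-Gasch\"utz property of $f$ supplies lifts $w_1,\dots,w_n\in F_n$ with $f(w_i)=h_i$ that generate $F_n$. By the Hopfian fact cited above, the $w_i$ form a basis, so there is $\phi\in\mathrm{Aut}(F_n)$ with $\phi(x_i)=w_i$, and then $(f\circ\phi)(x_i)=h_i=f'(x_i)$ for each $i$, giving $f\circ\phi=f'$.

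There is no real obstacle here: the lemma is essentially a translation between two languages once one recognises that generating $n$-tuples of $F_n$ are exactly images of the standard basis under automorphisms. The only substantive input is the Hopficity of $F_n$, which I would simply cite rather than prove.
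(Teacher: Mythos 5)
Your argument is correct, and it is the standard one; the paper itself omits the proof (prefacing the lemma with ``It is easy to see''). The one substantive input you identify, that any $n$-element generating set of $F_n$ is a free basis (via Hopficity), is indeed exactly what makes the correspondence between ``lift to a generating $n$-tuple of $F_n$'' and ``realise as $\phi(x_1),\dots,\phi(x_n)$ for some $\phi\in\mathrm{Aut}(F_n)$'' go through in both directions, so your two implications are complete as written.
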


Therefore, while it is known that $F_{3}$ is not Gasch\"utz, the question
whether every epimorphism from $F_{3}$ onto a finite simple group
is $3$-Gasch\"utz is a major open problem.

In light of this, one readily sees that the analogous question for
profinite groups has a positive solution. That is, the action of $\mathrm{Aut}(\hat{F}_{n})$
on the space of epimorphisms $\hat{F}_{n}\to G$ is always transitive,
since the free profinite group $\hat{F}_{n}$ is Gasch\"utz. This is
a key idea in \cite{Lub01,BCL22} and in \cite[§2.3]{GL09}.

When $G$ is a finite simple group and $n=2$, the action of $\mathrm{Aut}(F_{2})$
on $\mathcal{G}_{2}(G)$ is very often not transitive \cite[Remark 1.4]{Gel24}.
Thus, there exists an epimorphism $F_{2}\to G$ and a generating pair
for $G$ which cannot be lifted to $F_{2}$. However, the abelianisation
map is trivial in this case. This demonstrates that the unique phenomenon
exhibited in \thmref{intro-main} for Lie groups, does not apply for
general groups. 

\subparagraph{The Redundancy Rank.}

Another natural invariant, which was studied before in many contexts,
is the \emph{redundancy rank,} which we denote by $\rho(G)$. It is
the minimal $m\in\mathbb{N}\cup\left\{ \infty\right\} $ such that,
if $g_{1},\dots,g_{n}\in G$ generate $G$ and $n>m$, then one of
the $g_{i}$'s is redundant (i.e., there is some $S\subsetneq\left\{ g_{1},\dots,g_{n}\right\} $
that still generates $G$). 

The redundancy rank has been extensively studied for finite groups,
see \cite{Whi00,LMS00,Luc13.1,Luc13.2,Har23} and \cite{CC02,EG16,HHR07}.
We are not aware of it ever having been given a name. 

In Section \ref{sec:Abelian-groups}, we show the following. 
\begin{thm}
The redundancy rank of a connected abelian Lie group $A$ is equal
to $2\dim A-\dim T$, where $T$ is the maximal torus of $A$. 
\end{thm}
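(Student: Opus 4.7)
The plan is to write $A \cong \mathbb{R}^p \times \mathbb{T}^q$, so that the target value $2\dim A - \dim T$ equals $2p+q$, and prove matching upper and lower bounds.

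For the lower bound $\rho(A) \geq 2p+q$, I would exhibit the explicit irredundant generating set
\[
\{(e_i, 0),\ (\sqrt{2}\,e_i, 0) : 1 \leq i \leq p\} \cup \{(0, \alpha_j f_j) : 1 \leq j \leq q\},
\]
where $\{e_i\}, \{f_j\}$ are the standard bases and each $\alpha_j \in \mathbb{R}$ is irrational. Density follows by Kronecker's theorem applied to each coordinate factor, and irredundance is immediate: removing any single element collapses the corresponding one-dimensional factor to a discrete or trivial subgroup.

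For the upper bound $\rho(A) \leq 2p+q$, I would induct on the quantity $2p+q$. The base cases ($A$ trivial or $A \cong \mathbb{T}$) are routine. For the inductive step, take any irredundant generating set $\{g_1, \dots, g_n\}$ with $n \geq 2$. Two preliminary reductions apply: no $g_i$ can be a torsion element (since a connected Lie group is not a finite union of cosets of a proper closed subgroup), and $\overline{\mathbb{Z} g_1}$ must be a proper closed subgroup of $A$ (otherwise $g_1$ alone generates $A$, making every other $g_j$ redundant).

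The central step is to pass to the quotient $A' := A/\overline{\mathbb{Z} g_1}$, which is again a connected abelian Lie group, say $A' \cong \mathbb{R}^{p'} \times \mathbb{T}^{q'}$. Writing $g_1 = (x_1, y_1)$: if $x_1 \neq 0$ then $\mathbb{Z} g_1$ is already closed and discrete in $A$, and a direct lattice computation in the universal cover $\mathbb{R}^{p+q}$ yields $A' \cong \mathbb{R}^{p-1} \times \mathbb{T}^{q+1}$, so $2p' + q' = 2p + q - 1$. If $x_1 = 0$, then the non-torsion element $g_1 \in \mathbb{T}^q$ has closure whose identity component is a subtorus $\mathbb{T}^s$ with $s \geq 1$, and $A' \cong \mathbb{R}^p \times \mathbb{T}^{q-s}$ gives $2p' + q' \leq 2p + q - 1$. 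Either way, the induction parameter drops by at least one.

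Finally, I would check that the images $\pi(g_2), \dots, \pi(g_n)$ form an irredundant generating set of $A'$. Density is automatic from the openness of the quotient map $\pi$. For irredundance I would argue by contradiction: a hypothetical redundancy of some $\pi(g_j)$ in $A'$, combined with the identities $\pi^{-1}(\overline{D}) = \overline{\pi^{-1}(D)}$ (valid for Lie group quotient maps) and $\overline{S + \overline{H}} = \overline{S + H}$ (valid whenever $H$ is a subgroup), would force $\overline{\langle g_k : k \neq j\rangle} = A$, contradicting the original irredundance. The inductive hypothesis then yields $n - 1 \leq \rho(A') \leq 2p' + q' \leq 2p + q - 1$, so $n \leq 2p + q$. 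The main subtlety I anticipate is precisely this last piece of topological bookkeeping with closures and sums across the quotient map; everything else is a routine application of the structure of connected abelian Lie groups.
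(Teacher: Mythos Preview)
Your proof is correct, and it takes a genuinely different route from the paper's for the upper bound. The paper first invokes its structural Lemma~\ref{lem:generation-of-RnTm} to peel off a subset $E\subseteq S$ of size $p$ projecting to a basis of $\mathbb{R}^p$, thereby reducing the whole question to the torus $\mathbb{T}^{p+q}$; it then runs an induction on the torus dimension by looking at the connected component of $\overline{\langle S\setminus\{s_0\}\rangle}$ and replacing each $s$ by a suitable multiple $ns$. Your argument instead inducts directly on $2p+q$ by quotienting out $\overline{\mathbb{Z}g_1}$ one generator at a time, handling the two cases $x_1\neq 0$ and $x_1=0$ to see that the parameter drops. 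Your approach is more self-contained (it does not need the preliminary lemma extracting a lattice basis) and treats the $\mathbb{R}$ and $\mathbb{T}$ factors uniformly; the paper's approach has the conceptual advantage of isolating the compact case once and for all. The lower-bound constructions are essentially identical. The topological identities you flag as the ``main subtlety''---$\pi^{-1}(\overline{D})=\overline{\pi^{-1}(D)}$ for an open quotient map and $\overline{S+\overline{H}}=\overline{S+H}$---are indeed exactly what is needed to push irredundance through the quotient, and both hold for the stated elementary reasons.
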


Using this and the results of \cite{AN24}, it is possible to reduce
the computation of the redundancy rank of any connected Lie group
to the computation of the redundancy rank of its maximal semisimple
Lie subgroup. Computing the redundancy rank of these groups seems
to be an intricate problem.
\begin{question}
What is the redundancy rank of a connected simple Lie group?
\end{question}

Curiously, the redundancy rank of $\mathrm{SL}_{2}(\mathbb{R})$ and
$\mathrm{SL}_{2}(\mathbb{C})$ is $\infty$! This was proved in \cite{Min13}
(although it is not explicitly stated there). Indeed, Minsky constructs,
for every $n\geq2$, a generating $n$-tuple in $\mathrm{SL}_{2}(\mathbb{C})$
which is not redundant. In fact, he demonstrates a much stronger property,
namely, that this $n$-tuple remains irredundant even after applying
Nielsen transformations. We refer to \cite{Gel24} for an excellent
survey on actions of $\mathrm{Aut}(F_{n})$ on representation spaces,
covering, among others, the result by Minsky.

It seems reasonable to conjecture that the redundancy rank of any
non-compact semisimple Lie group is infinite. In a future work, we
investigate the redundancy rank of connected compact simple Lie groups,
as well as the \emph{Zariski }redundancy rank of algebraic semisimple
groups, which is analogously defined using the Zariski topology. 

\paragraph{Paper Overview.}

Section \ref{sec:Preliminaries} is a preliminary section in which
we introduce general tools and terminology useful for lifting generators.
In Section \ref{sec:Abelian-groups} we restrict to the case $G$
is abelian. We show that every open epimorphism $f:G\to H$ is $n$-Gasch\"utz
for every $n\geqslant\sr G$, and we show this bound is tight.

In Section \ref{sec:SS} we consider semisimple Lie groups $G$. The
main step there is to show the following.
\begin{prop}
Consider two connected simple Lie groups $G$ and $H$. Then any generating
tuple $x_{1},...,x_{n}\in G$ can be paired with a (generating) tuple
$y_{1},...,y_{n}\in H$ such that $(x_{1},y_{1}),...,(x_{n},y_{n})$
generate the product $G\times H$. 
\end{prop}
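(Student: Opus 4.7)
The plan is to use Goursat's lemma for $G \times H$ to reduce the statement to a dimension-counting argument. Write $L := \overline{\langle (x_i,y_i):i=1,\dots,n\rangle}$ for the closed subgroup generated by the prospective tuple. Provided the $y_i$ are chosen to generate $H$, $L$ surjects onto both factors. Goursat's lemma then identifies $L$ with a graph relative to closed normal subgroups $N_1 \trianglelefteq G$ and $N_2 \trianglelefteq H$: concretely, $L/(N_1\times N_2)$ is the graph of an isomorphism $\phi : G/N_1 \to H/N_2$. Since $G$ and $H$ are connected simple Lie groups, each of $N_1, N_2$ is either the whole group or a discrete central subgroup. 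The case $N_1 = G$ gives $G\times\{1\}\leq L$, hence $L = G\times H$, which is exactly what I want. So the obstruction arises only from near-isomorphisms $\phi : G/N_1 \to H/N_2$ with $N_1\leq Z(G)$ and $N_2\leq Z(H)$; in that case the condition $(x_i,y_i)\in L$ forces $y_i \in \phi(x_i) N_2$ for every $i$.

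If $G$ and $H$ are not locally isomorphic, no such $\phi$ exists, so any generating tuple $(y_i)$ of $H$ works. Otherwise $G$ and $H$ share a universal cover $\widetilde G$, and every candidate $\phi$ arises from an automorphism of $\widetilde G$, i.e.\ from an element of the $(\dim G)$-dimensional Lie group $\mathrm{Aut}(\widetilde G)$. Fixing lifts $\widetilde x_i \in \widetilde G$ of the $x_i$, the ``bad'' tuples $(y_i)\in H^n$ — those for which some such $\phi, N_1, N_2$ witness the failure $y_i \in \phi(x_i) N_2$ — lie in the image of a continuous map $\mathrm{Aut}(\widetilde G)\times C \to H^n$, where $C$ is a countable index set encoding the choices of $(N_1,N_2)$ together with the discrete ambiguity of $\phi(x_i)$ modulo $N_2$. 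Here I use that $Z(G)$ and $Z(H)$ are finitely generated abelian, hence have countably many subgroups. Consequently the bad set is a countable union of subsets of $H^n$ of dimension at most $\dim G = \dim H$.

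Since $n\geq 2$ (a connected non-abelian simple Lie group is not topologically monothetic) and $n\dim H > \dim H$, the bad set is meager and of Haar-measure zero in $H^n$. On the other hand, the set of $n$-tuples topologically generating $H$ is residual for $n\geq 2$ in any connected simple Lie group. Since $H^n$ is a Baire space, the intersection of these two residual sets is nonempty, and any element of it furnishes the desired $(y_i)$.

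The main obstacle is the locally isomorphic case: one must set up the parameterisation of near-isomorphisms by $\mathrm{Aut}(\widetilde G)$ carefully — accounting for the various discrete central ambiguities in passing between $G$, $H$, and $\widetilde G$ — so that the resulting bad set is visibly of dimension at most $\dim H$ and hence meager in $H^n$. Once this parameterisation is set up, Baire category (or equivalently a measure-theoretic argument) closes the proof.
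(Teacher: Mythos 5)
Your argument breaks at the claim that once the $y_i$ generate $H$, the closed subgroup $L=\overline{\langle(x_i,y_i)\rangle}$ surjects onto both factors. This is false: a closed subgroup of $G\times H$ can project densely but non\-surjectively to each factor, and in particular can be discrete. For example, take $G=H=\mathrm{PSL}_2(\mathbb{R})$ and let $L$ be the image of $\mathrm{PSL}_2(\mathcal{O}_K)$ under the two real embeddings of a real quadratic field $K$: this is a discrete (hence closed) irreducible lattice in $G\times H$ whose projection to each factor is a dense, countable, hence proper subgroup. For such an $L$ Goursat's lemma does not apply (the projections are not surjective, and $L\cap(G\times 1)$, $L\cap(1\times H)$ need not be normal in $G$, $H$), so your parameterisation of the bad set by local isomorphisms $\phi:G/N_1\to H/N_2$ misses this discrete degeneracy entirely.

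This is not a technicality but precisely the hard part of the statement. To make your category/measure argument close, you would additionally have to show that the set of tuples $(y_i)\in H^n$ for which $L$ has trivial identity component is meager, and that is a substantial fact requiring something like the Breuillard--Gelander theorem. The paper's proof is organised around exactly this issue: it constructs word maps $w_1,w_2$ in the $h_i$'s, with prescribed abelianisation so that the associated evaluation $G^2\to G^2$ has surjective differential, to push two elements into a strongly Zassenhaus neighbourhood $W\times\Omega$ of the identity; there \cite[Theorem 2.4]{BG03} guarantees that non-generating pairs lie in a closed null set, generation reduces to generating the Lie algebra $\mathfrak{g}\oplus\mathfrak{h}$, and the graph-type degeneracy is then excluded by combining the measure-zero automorphism-orbit fact with \cite[Lemma 3.7]{GM17}. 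Without some replacement for this Zassenhaus/word-map step, the Goursat reduction alone does not prove the proposition.
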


The key idea here is the use of suitable word maps to transfer elements
of $G$ into a small `Zassenhaus neighbourhood' of the identity.
We believe the proof of this proposition may be useful for other generation
problems in semisimple Lie groups. 

In \secref{Reductive}, we consider products of semisimple Lie groups
with abelian Lie groups. In Section \ref{sec:AN}, we consider what
we call \emph{Abels--Noskov groups}, a very tame class of groups,
the generating subsets of which were completely classified in \cite{AN24}
(see \defref{abels-noskov} and \thmref{gen-of-AN}). Using their
results, we can reduce the proof of \thmref{intro-main} to Abels--Noskov
groups in Section \ref{sec:GeneralLie}, thus concluding the proof.
Finally, in Section \ref{sec:GasRank} we discuss the Gasch\"utz rank
of connected Lie groups.
\begin{acknowledgement*}
We extend our gratitude to Uri Bader, Tsachik Gelander, Alex Lubotzky
and Gregory Soifer for inspiration and very helpful discussions. We
thank Abels and Noskov for communications regarding this project as
well as their closely related project \cite{AN24} which we strongly
rely on. Sadly, Noskov passed away before we finished the project.
Finally, we would like to express our sincere thanks to the referees
for their attentive reading and insightful comments.

Our research is partially supported by the ERC Advanced Grant 101052954
-- SRS. The second author is supported by NSF postdoctoral fellowship
DMS-2402368. 
\end{acknowledgement*}

\section{Preliminaries\label{sec:Preliminaries}}

In this section we establish our terminology and prove a few basic
lemmas. Throughout this paper, we assume all homomorphisms are continuous;
similarly, if $G$ is a topological group, we say that a subset $S\subseteq G$
\emph{generates $G$ }if it topologically generates $G$: that is,
if $\left\langle S\right\rangle $ (the group abstractly generated
by $S$) is dense in $G$. Recall that $\tr G$, the \emph{(topological)
generating rank }of $G$, is the minimal size of a generating set
of $G$.

As we will see below, it is going to be useful for our purposes to
consider non-Hausdorff groups. Accordingly,\emph{ we do not assume
topological groups are necessarily Hausdorff. }
\begin{notation}
If $f:G\to H$ is a map and $h_{1},\dots,h_{n}\in H$, we say that
$g_{1},\dots,g_{n}\in G$ are \emph{lifts }if $f(g_{i})=h_{i}$ for
$i=1,\dots,n$. 
\end{notation}

\begin{defn}
Let $G$ be a topological group, $f:G\to H$ an epimorphism. We say
that $f$ is \emph{absolutely Gasch\"utz }if, whenever $h_{1},\dots,h_{n}\in H$
generate $H$ (for arbitrary $n\in\mathbb{N}$), and $g_{1},\dots,g_{n}\in G$
are lifts, necessarily $g_{1},\dots,g_{n}$ generate $G$.
\end{defn}

\begin{defn}
Let $G$ be a topological group, $N\trianglelefteqslant G$. We say
that $N$ is \emph{non-generating }if, for every $H\leqslant G$ such
that $HN$ is dense in $G$, necessarily $H$ is dense in $G$.
\end{defn}

The following is immediate:
\begin{lem}
If $N$ is a non-generating normal subgroup of a topological group
$G$, then $f:G\to G/N$ is absolutely Gasch\"utz.
\end{lem}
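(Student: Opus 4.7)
The plan is to unravel the definitions directly; the statement should reduce to a one-line verification once the notion of ``non-generating'' is pushed through the quotient map. Given generators $h_1,\dots,h_n\in G/N$ and arbitrary lifts $g_1,\dots,g_n\in G$, I set $H := \langle g_1,\dots,g_n\rangle$ (the abstract subgroup) and aim to show $H$ is dense in $G$. By hypothesis on $N$, it will suffice to prove that $HN$ is dense in $G$.

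The key observation is that the quotient map $f:G\to G/N$ is open, so density in $G/N$ pulls back cleanly. Concretely, for any nonempty open $U\subseteq G$, the image $f(U)$ is a nonempty open subset of $G/N$. Since $f(H)\supseteq\{h_1,\dots,h_n\}$ generates $G/N$, the set $f(H)$ is dense in $G/N$, so $f(U)\cap f(H)\neq\emptyset$. Picking $u\in U$ and $h\in H$ with $f(u)=f(h)$ gives $u\in hN\subseteq HN$, hence $U\cap HN\neq\emptyset$. This shows $HN$ is dense in $G$.

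Invoking the hypothesis that $N$ is non-generating then yields $\overline{H}=G$, so $g_1,\dots,g_n$ topologically generate $G$, as required. There is no real obstacle here: the only subtle point is that one must remember to use the openness of $f$ (implicit in the fact that $f$ is the canonical projection onto the quotient topology) in order to deduce density of $HN$ in $G$ from density of $f(H)$ in $G/N$; without openness, one would only get that $HN$ meets every $N$-saturated open set.
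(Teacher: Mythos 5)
The paper states this lemma as ``immediate'' and gives no proof, and your argument is exactly the intended unraveling of definitions: passing to $H=\langle g_1,\dots,g_n\rangle$, noting $HN=f^{-1}\bigl(\langle h_1,\dots,h_n\rangle\bigr)$, and using openness of the quotient map to conclude $HN$ is dense, at which point the non-generating hypothesis finishes. This is correct and is essentially the only route.
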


In particular, if $N\trianglelefteqslant G$ is non-generating, then
$\tr G=\tr{G/N}$. As for lifting generators, we have the following:
\begin{lem}
\label{lem:AbsGas}Let $G$ be a topological group, $f:G\to H$ an
epimorphism. Let $N\trianglelefteqslant G$ be a non-generating normal
subgroup, and consider the induced epimorphism $\tilde{f}:G/N\to H/f(N)$,
which makes the following diagram commute:
\[
\xymatrix{G\ar[r]^{f}\ar[d] & H\ar[d]\\
G/N\ar[r]^{\tilde{f}} & H/f(N)
}
\]
Let $h_{1},\dots,h_{n}\in H$ be generators. Then they admit generating
lifts to $G$ via $f$ if and only if $h_{1}f(N),\dots,h_{n}f(N)$
admit generating lifts to $G/N$ via $\tilde{f}$. 
\end{lem}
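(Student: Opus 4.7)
The plan is to prove both directions directly from the definitions, with the nontrivial content concentrated in the use of the hypothesis that $N$ is non-generating.

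The forward direction is essentially formal. Suppose $g_{1},\dots,g_{n}\in G$ are generating lifts of $h_{1},\dots,h_{n}$ via $f$. Writing $q:G\to G/N$ for the quotient map, commutativity of the diagram gives $\tilde{f}(q(g_{i}))=h_{i}f(N)$, so $q(g_{1}),\dots,q(g_{n})$ are lifts of the cosets $h_{i}f(N)$; moreover they generate $G/N$ topologically because $q$ is a continuous surjection and $\langle g_{1},\dots,g_{n}\rangle$ is dense in $G$.

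For the converse, suppose $\bar{g}_{1},\dots,\bar{g}_{n}\in G/N$ are generating lifts of the cosets $h_{i}f(N)$ via $\tilde{f}$. I would proceed in three steps. First, I would promote the $\bar{g}_{i}$ to genuine lifts of the $h_{i}$ in $G$: pick arbitrary $g_{i}'\in G$ with $q(g_{i}')=\bar{g}_{i}$, note that $f(g_{i}')\in h_{i}f(N)$, write $f(g_{i}')=h_{i}f(m_{i})$ for some $m_{i}\in N$, and set $g_{i}:=g_{i}'m_{i}^{-1}$. Then $f(g_{i})=h_{i}$ exactly, and since $m_{i}\in N$ we still have $q(g_{i})=\bar{g}_{i}$. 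Second, I would argue that the subgroup $H_{0}:=\langle g_{1},\dots,g_{n}\rangle\leqslant G$ satisfies $\overline{H_{0}N}=G$: its image in $G/N$ is $\langle\bar{g}_{1},\dots,\bar{g}_{n}\rangle$, which is dense by hypothesis, so for every nonempty open $U\subseteq G$ the image $q(U)$ is open and meets $\langle\bar{g}_{i}\rangle$, which produces an element of $H_{0}$ inside $q^{-1}(q(U))=UN$, hence $H_{0}N\cap U\neq\emptyset$. Third, since $N$ is non-generating and $H_{0}N$ is dense in $G$, the definition of non-generating yields that $H_{0}$ itself is dense in $G$, so $g_{1},\dots,g_{n}$ generate $G$.

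There is no serious obstacle here; the only point that requires a little care is the first step, where one must convert a lift of $\bar{g}_{i}$ under $q$ into an exact lift of $h_{i}$ under $f$, and this uses the obvious fact that $f$ restricted to $N$ surjects onto $f(N)$. The argument is otherwise a direct unpacking of the definitions of \emph{lift}, \emph{non-generating}, and topological generation, combined with the fact that the quotient map $q:G\to G/N$ is open and continuous.
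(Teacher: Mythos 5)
Your proof is correct and takes essentially the same approach as the paper's: both rely on adjusting an arbitrary lift of $\bar g_i$ by an element of $N$ to obtain a genuine lift of $h_i$, and both invoke the non-generating hypothesis to pass density up from $G/N$ to $G$. The only difference is organizational — you correct the lifts first and apply the non-generating property once, whereas the paper (phrased in terms of its "absolutely Gasch\"utz" reformulation) concludes generation, then corrects, then reapplies the property.
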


Observe that, even if $G$ and $H$ are Hausdorff, $f(N)$ might not
be closed, so $H/f(N)$ might not be Hausdorff.
\begin{proof}
Clearly, if $h_{1},\dots,h_{n}\in H$ admit generating lifts $g_{1},\dots,g_{n}\in G$,
then $g_{1}N,\dots,g_{n}N$ are generating lifts of $h_{1}f(N),\dots,h_{n}f(N)$. 

Now, in order to show the opposite direction, let $h_{1},\dots,h_{n}\in H$
be generators of $G$ such that $h_{1}f(N),\dots,h_{n}f(N)$ admit
generating lifts $g_{1}N,\dots,g_{n}N\in G/N$. Since $G\to G/N$
is absolutely Gasch\"utz, we get that $g_{1},\dots,g_{n}$ generate
$G$. It might seem that we are done, but not quite: $g_{1},\dots,g_{n}$
might not be lifts of $h_{1},\dots,h_{n}$. All we know is that $f(g_{i})f(N)=h_{i}f(N)$
for $i=1,\dots,n$. But this means that there are $\varepsilon_{1},\dots,\varepsilon_{n}\in N$
such that $f(g_{i}\varepsilon_{i})=h_{i}$ for $i=1,\dots,n$. Since
$G\to G/N$ is absolutely Gasch\"utz, the elements $g_{1}\varepsilon_{1},\dots,g_{n}\varepsilon_{n}$
still generate $G$, so we are done. 
\end{proof}
It will be useful to consider non-generating normal subgroups $N$
whose image in some quotients might not be closed; therefore, it is
useful to consider quotients by non-closed normal subgroups as well,
if we want to be able to quotient out any non-generating normal subgroup
we like. Luckily, we don't need to investigate quotients by \emph{any
}non-closed normal subgroup, such as $\mathbb{Q}$ in $\mathbb{R}$.
Consider the diagram in the last lemma: while $\ker\tilde{f}$ might
not be closed, it is still the image of $\ker f$ under the quotient
$G\to G/N$, and $\ker f$ \emph{is }closed. This gives us some information
on $\ker f$, and hence $\ker\tilde{f}$. For instance, they are finitely
generated: 
\begin{lem}
\label{lem:NormalisFG}A closed normal subgroup of a connected Lie
group is (topologically) finitely generated. 
\end{lem}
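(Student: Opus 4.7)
The plan is to split $N$ into its connected component and the discrete quotient $N/N^0$. Since $N$ is closed in $G$, Cartan's closed subgroup theorem makes $N$ a Lie subgroup, and its identity component $N^0$ is characteristic in $N$ and hence (because $N \trianglelefteq G$) a closed normal subgroup of $G$. Consequently both $G/N^0$ and $G/N$ are connected Lie groups, and the projection $\pi : G/N^0 \to G/N$ is a regular covering with deck group $N/N^0$ (the latter is discrete because $N^0$ is open in $N$).

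The connected piece $N^0$ is a connected Lie group, and every such group is topologically finitely generated (indeed the precise value of $\tr{\cdot}$ is the content of the Abels--Noskov formula reviewed in Section \ref{sec:AN}). So it suffices to generate $N/N^0$ finitely as a discrete abstract group. Since $G$ is connected, its conjugation action on the discrete normal subgroup $N/N^0$ of $G/N^0$ is trivial, so $N/N^0$ is central in $G/N^0$. From the long exact sequence of homotopy groups of the covering $\pi$ I would extract the short exact sequence
\[ 0 \to \pi_1(G/N^0) \to \pi_1(G/N) \to N/N^0 \to 0, \]
exhibiting $N/N^0$ as a quotient of $\pi_1(G/N)$.

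At this point the key input is the classical fact that any connected Lie group has finitely generated fundamental group: it deformation retracts onto a maximal compact subgroup, which is a compact manifold and hence has finitely generated $\pi_1$. Applied to $G/N$, this makes $N/N^0$ finitely generated, and combining a finite topological generating set of $N^0$ with lifts of a finite generating set of $N/N^0$ yields a finite topological generating set of $N$. The only potential pitfall is the non-Hausdorff subtleties the paper is otherwise careful about, but here both $N$ and $N^0$ are closed in $G$, so the standard Hausdorff Lie-theoretic machinery applies verbatim and no step is genuinely delicate.
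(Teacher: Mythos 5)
Your proof is correct and follows essentially the same route as the paper: reduce to the discrete quotient $N/N^0$, realise it as the deck group of a covering of connected Lie groups, and invoke finite generation of $\pi_1$ of a connected Lie group via the deformation retraction onto a maximal compact subgroup. The only cosmetic difference is that you extract $N/N^0$ as a quotient of $\pi_1(G/N)$ directly from the long exact sequence of the covering $G/N^0 \to G/N$, whereas the paper reduces first to a discrete central subgroup and then passes through the universal cover; both arguments rest on the same two facts.
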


\begin{proof}
Let $N$ be a closed, normal subgroup of a connected Lie group $G$.
Its connected component $\conn N$ is a connected Lie group, hence
finitely generated. Therefore, it is enough to show $N/\conn N$ is
finitely generated. Since $\conn N$ is characteristic in $N$, it
is normal in $G$, and $N/\conn N$ is a normal subgroup of $G/\conn N$.
Thus, it is enough to show that discrete, normal subgroups of connected
Lie groups are finitely generated.

Let $\Gamma\trianglelefteqslant G$ be a discrete, normal subgroup
of a connected Lie group $G$. Then $\Gamma$ is central in $G$.
Consider the universal cover $\tilde{G}$ of $G$, and denote by $\pi:\tilde{G}\to G$
the covering map. Since $\pi$ is a local homeomorphism, we get that
$\pi^{-1}(\Gamma)$ is discrete. It is also normal in $\tilde{G}$,
hence central. Now, consider $\tilde{G}/\pi^{-1}(\Gamma)$; since
$\tilde{G}$ is simply-connected, $\pi^{-1}(\Gamma)$ is the fundamental
group of $\tilde{G}/\pi^{-1}(\Gamma)$. It is well-known that every
connected Lie group is homotopy equivalent to a maximal compact subgroup,
and that the fundamental group of a compact manifold is finitely generated.
Since $\Gamma$ is a quotient of $\pi^{-1}(\Gamma)$, we get that
$\Gamma$ is finitely generated.
\end{proof}

Observe that, if $\Delta\trianglelefteqslant G$ is a normal subgroup
of a topological group $G$, then $g_{1},\dots,g_{n}\in G$ descend
to generators of $G/\Delta$ if and only if they descend to generators
of $G/\overline{\Delta}$, if and only if $\left\langle g_{1},\dots,g_{n}\right\rangle \Delta$
is dense. 

It will sometimes be more convenient to focus on subgroups rather
than maps. So, instead of considering an epimorphism $f:G\to H$ and
looking for lifts of some generators $h_{1},\dots,h_{n}\in H$, we
may begin with elements $g_{1},\dots,g_{n}\in G$ for which $\left\langle g_{1},\dots,g_{n}\right\rangle \ker f$
is dense in $G$ (i.e., elements descending to generators of $H$)
and look for $k_{1},\dots,k_{n}\in\ker f$ such that $g_{1}k_{1},\dots,g_{n}k_{n}$
generate $G$.

\section{Abelian Groups\label{sec:Abelian-groups}}

If $A$ is a connected abelian Lie group, then its maximal compact
subgroup $T$ is always isomorphic to a torus. Putting $m=\dim T$
and $n=\dim A-\dim T$, we always have $A\cong\mathbb{R}^{n}\times\mathbb{T}^{m}$. 
\begin{notation}
For the purposes of this section, the \emph{free rank }of a connected
abelian Lie group $A$ is $\dim A/T$, where $T$ is its maximal compact
subgroup.
\end{notation}

The purpose of this section is to show that, if $f:\mathbb{R}^{n}\times\mathbb{T}^{m}\to H$
is an open epimorphism and $h_{1},\dots,h_{k}\in H$ are generators
of $H$ with $k\geqslant2n+m$, there are lifts $g_{1},\dots,g_{k}\in\mathbb{R}^{n}\times\mathbb{T}^{m}$
that generate $\mathbb{R}^{n}\times\mathbb{T}^{m}$. 

If $n=0$ and $H$ is Hausdorff, the requirement $k\geqslant2n+m$
can be removed (e.g., by \cite{CG18}); however, we will see that
if either $n>0$ or $H$ is allowed not to be Hausdorff, there is
an epimorphism $f:\mathbb{R}^{n}\times\mathbb{T}^{m}\to H$ and generators
$h_{1},\dots,h_{2n+m-1}\in H$ of $H$ that do not admit generating
lifts (see \propref{lowerbnd} and \lemref{GasStarofaTorus}).

We begin with two standard lemmas. Recall that $v_{1},\dots,v_{n}\in\mathbb{R}^{n}$
are a basis if and only if $\left\langle v_{1},\dots,v_{n}\right\rangle \leqslant\mathbb{R}^{n}$
is a lattice.
\begin{lem}
\label{lem:latticeprojection}Let $G$ be a connected abelian Lie
group with maximal compact subgroup $K$, and let $\pi:G\to G/K$
be the projection. Set $n=\dim G/K$ and let $g_{1},\dots,g_{n}\in G$.
Then $g_{1},\dots,g_{n}$ generate a lattice in $G$ if and only if
$\pi(g_{1}),\dots,\pi(g_{n})$ generate a lattice in $G/K$.
\end{lem}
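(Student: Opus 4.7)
The plan is to reduce everything to the following observation: since $\pi(g_{1}),\dots,\pi(g_{n})$ are $n$ elements of the $n$-dimensional real vector space $G/K\cong\R^{n}$, they generate a lattice in $G/K$ if and only if they are linearly independent, equivalently, form an $\R$-basis of $\R^{n}$. Both implications of the lemma will then be extracted from the way the short exact sequence $1\to K\to G\xrightarrow{\pi}G/K\to1$ interacts with $\Lambda:=\langle g_{1},\dots,g_{n}\rangle$.

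For the forward direction, I would argue by contrapositive. If $\pi(g_{1}),\dots,\pi(g_{n})$ are linearly dependent, they lie in a proper subspace $V\subsetneq\R^{n}$, so $\Lambda\subseteq\pi^{-1}(V)$, which is a closed subgroup of $G$ (by continuity of $\pi$) containing $K$ and properly contained in $G$. The induced continuous surjection $G/\Lambda\twoheadrightarrow G/\pi^{-1}(V)\cong\R^{n}/V$ then exhibits $G/\Lambda$ as mapping onto the non-compact vector group $\R^{n-\dim V}$, which precludes $\Lambda$ from being a lattice.

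For the backward direction, assume $\pi(g_{1}),\dots,\pi(g_{n})$ form an $\R$-basis, and set $L:=\pi(\Lambda)$, a lattice in $\R^{n}$. I would first establish discreteness of $\Lambda$: the restriction $\pi|_{\Lambda}\colon\Lambda\to L$ is an injective (by linear independence of the $\pi(g_{i})$) continuous homomorphism into the discrete group $L$, so $\{0\}$ is open in $\Lambda$. For compactness of $G/\Lambda$, I would use that $\Lambda K=\pi^{-1}(L)$ is closed in $G$ (since $L$ is closed in $\R^{n}$), and examine the short exact sequence
\[
1\to\Lambda K/\Lambda\to G/\Lambda\to G/\Lambda K\to1,
\]
whose outer terms are the torus $G/\Lambda K\cong\R^{n}/L$ and the quotient $\Lambda K/\Lambda\cong K/(K\cap\Lambda)$ of the compact group $K$; both being compact forces $G/\Lambda$ to be compact.

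I do not foresee any serious obstacle: the proof is a routine packaging of the short exact sequence $K\hookrightarrow G\twoheadrightarrow G/K$ together with the basis criterion above. The only technicalities to be careful with are the closedness of $\pi^{-1}(V)$ and $\pi^{-1}(L)=\Lambda K$, and the identification $G/\Lambda K\cong\R^{n}/L$, all of which follow immediately from continuity of $\pi$ together with $\ker\pi=K$.
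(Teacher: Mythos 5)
Your proof is correct. The overall skeleton is the same as the paper's (exploit $K\hookrightarrow G\twoheadrightarrow G/K\cong\R^n$, together with the fact that $n$ vectors in $\R^n$ generate a lattice iff they form a basis, and then check discreteness and cocompactness separately), but the two directions are carried out by genuinely different arguments. For the forward implication, the paper argues directly: a lattice is closed, hence its image in $G/K$ is a closed \emph{countable} subgroup of a locally compact group and therefore discrete, and cocompactness passes to the quotient; you instead argue by contrapositive, observing that linear dependence of the $\pi(g_i)$ traps $\Lambda$ inside $\pi^{-1}(V)$ for a proper subspace $V$, so $G/\Lambda$ surjects onto the non-compact $\R^n/V$. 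Your route is more elementary (it avoids the Baire-type ``closed countable $\Rightarrow$ discrete'' step) at the cost of relying on the basis characterisation, which is in any case needed for the other direction. For the backward implication, the paper proves discreteness of $\Lambda$ by a sequential argument combined with $\Lambda\cap K=\{0\}$, while your observation that $\pi|_\Lambda$ is an injective continuous map into the discrete group $L$, so $\{0\}=\pi|_\Lambda^{-1}(\{0\})$ is open in $\Lambda$, is cleaner; and your compactness-of-$G/\Lambda$ argument via the exact sequence $\Lambda K/\Lambda\to G/\Lambda\to G/\Lambda K$ is a standard tightening of the paper's terser ``$K$ compact and $\langle\pi(E)\rangle$ cocompact'' phrasing. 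No gaps.
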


\begin{proof}
If $\left\langle g_{1},\dots,g_{n}\right\rangle $ is a lattice then
is it is particular closed, so $\left\langle g_{1},\dots,g_{n}\right\rangle K$
is closed, so $\pi(\left\langle g_{1},\dots,g_{n}\right\rangle )=\left\langle \pi(g_{1}),\dots,\pi(g_{n})\right\rangle $
is closed. Since it is countable and $G/K$ is locally compact, it
is discrete. Since $\left\langle g_{1},\dots,g_{n}\right\rangle $
is cocompact, $\left\langle \pi(g_{1}),\dots,\pi(g_{n})\right\rangle $
is cocompact as well, hence a lattice.

Now, assume $\left\langle \pi(g_{1}),\dots,\pi(g_{n})\right\rangle $
is a lattice in $G/K$. Recall that $G/K\cong\mathbb{R}^{n}$, so
$(\pi(g_{1}),\dots,\pi(g_{n}))$ is a basis. Denote $E=\{g_{1},\dots,g_{n}\}$;
we claim $\left\langle E\right\rangle $ is discrete. First, observe
that $\left\langle E\right\rangle $ intersects $K$ trivially (if
$x\in\left\langle E\right\rangle \cap K$, then $x=\sum_{i=1}^{n}\alpha_{i}g_{i}$
for some $\alpha_{1},\dots,\alpha_{n}\in\mathbb{Z}$ and also $\pi(x)=\sum_{i=1}^{n}\alpha_{i}\pi(g_{i})=0$;
since $\pi(g_{1}),\dots,\pi(g_{n})$ are linearly independent, this
means $\alpha_{i}=0$ for every $i$, which means $x=0$). If $\left\langle E\right\rangle $
is not discrete, then there is a sequence of nontrivial elements in
$\left\langle E\right\rangle $ approaching zero; since $\pi(\left\langle E\right\rangle )$
is discrete, almost all of them must be mapped to zero under $\pi$.
This is a contradiction, since we just showed $\pi$ is injective
on $\left\langle E\right\rangle $. Since $K$ is compact and $\left\langle \pi(E)\right\rangle $
is cocompact, we get that $\left\langle E\right\rangle $ is discrete
and cocompact, as needed.
\end{proof}
\begin{lem}
\label{lem:generation-of-RnTm}A subset $S\subseteq\mathbb{R}^{n}\times\mathbb{T}^{m}$
generates $\mathbb{R}^{n}\times\mathbb{T}^{m}$ if and only if there
is some $E\subseteq S$ projecting to a basis of $\mathbb{R}^{n}$
such that $S\backslash E$ projects to a generating set of $(\mathbb{R}^{n}\times\mathbb{T}^{m})/\langle E\rangle$.
Moreover, if $E$ has exactly $n$ elements, then $(\mathbb{R}^{n}\times\mathbb{T}^{m})/\langle E\rangle$
is isomorphic to $\mathbb{T}^{n+m}$.
\end{lem}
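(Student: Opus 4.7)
The plan is to project via the quotient map $\pi\colon G := \mathbb{R}^{n}\times\mathbb{T}^{m}\to G/\mathbb{T}^{m}\cong\mathbb{R}^{n}$ and use \lemref{latticeprojection} to transport lattice information between $G$ and the vector space $\mathbb{R}^{n}$, where matters are linear-algebraic and easy to handle.

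For the forward direction, I assume $S$ generates $G$, so $\langle\pi(S)\rangle$ is dense in $\mathbb{R}^{n}$. The real linear span of $\pi(S)$ is a closed subgroup of $\mathbb{R}^{n}$ containing this dense subgroup, hence equals $\mathbb{R}^{n}$; so $\pi(S)$ spans $\mathbb{R}^{n}$, and I can pick $E\subseteq S$ with $\pi(E)$ a basis (forcing $|E|=n$ and $\pi|_{E}$ injective). By \lemref{latticeprojection}, $\langle E\rangle$ is then a lattice in $G$, so $G/\langle E\rangle$ is a connected compact abelian Lie group of dimension $n+m$ and hence isomorphic to $\mathbb{T}^{n+m}$; this also establishes the ``moreover'' clause. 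Writing $q\colon G\to G/\langle E\rangle$ for the quotient map, the image $q(\langle S\rangle)=\langle q(S)\rangle=\langle q(S\setminus E)\rangle$ (the last equality because $q(E)=\{0\}$) is dense in $G/\langle E\rangle$, since $q$ is continuous and surjective and $\langle S\rangle$ is dense in $G$.

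For the converse, suppose $\pi(E)$ is a basis of $\mathbb{R}^{n}$ and $S\setminus E$ projects to a generating set of $G/\langle E\rangle$. Invoking \lemref{latticeprojection} again, $\langle E\rangle$ is a lattice in $G$; since $\langle E\rangle\subseteq\langle S\rangle$, the subgroup $\langle S\rangle$ is a union of $\langle E\rangle$-cosets. The hypothesis says that $q(\langle S\rangle)=\langle q(S\setminus E)\rangle$ is dense in $G/\langle E\rangle$, and $q$ is open; therefore, for any $g\in G$ and any open $U\ni g$, the open set $q(U)$ meets $q(\langle S\rangle)$, producing $s\in\langle S\rangle$ and $u\in U$ with $s-u\in\langle E\rangle\subseteq\langle S\rangle$, whence $u\in U\cap\langle S\rangle$. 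Hence $\langle S\rangle$ is dense in $G$, i.e.\ $S$ generates $G$.

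The only mildly subtle point is the density-lifting argument in the converse; once one observes that $\langle S\rangle$ absorbs the kernel $\langle E\rangle$ of $q$, it is routine. Everything else is a direct translation, via \lemref{latticeprojection}, of elementary vector-space facts about $\mathbb{R}^{n}$ into the language of lattices in $G$.
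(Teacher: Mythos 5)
Your proof is correct and follows essentially the same route as the paper's: project to $\mathbb{R}^{n}$, extract a subset $E$ whose projections form a basis, and pull density back through the quotient $q\colon G\to G/\langle E\rangle$ using openness of $q$ together with the inclusion $\langle E\rangle\subseteq\langle S\rangle$. The paper phrases the converse tersely (``$\ker\tau\cdot\langle S\rangle$ is dense, and $\ker\tau\subseteq\langle S\rangle$''); you unpack exactly the same density-lifting argument.

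One small remark: in the converse you invoke \lemref{latticeprojection} to assert that $\langle E\rangle$ is a lattice. That assertion is both unnecessary for your argument and, strictly speaking, unjustified as stated: \lemref{latticeprojection} applies to an $n$-tuple, whereas in the converse direction $E$ is only assumed to project onto a basis and may have more than $n$ elements (the ``moreover'' clause of the lemma is formulated precisely because $|E|=n$ is a separate hypothesis). For instance, in $\mathbb{R}\times\mathbb{T}$ with $E=\{(1,0),(1,\sqrt{2})\}$, the projection $\pi(E)=\{1\}$ is a basis of $\mathbb{R}$, yet $\langle E\rangle$ contains $(0,\sqrt{2})$ and is therefore dense in $\{0\}\times\mathbb{T}$, so not a lattice. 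Happily, your density argument never uses the lattice property --- it only uses $\langle E\rangle\subseteq\langle S\rangle$ and openness of $q$ (which holds for any quotient of topological groups, Hausdorff or not) --- so deleting that one sentence leaves the proof intact and brings it in line with the paper.
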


\begin{proof}
Denote by $p_{1}:\mathbb{R}^{n}\times\mathbb{T}^{m}\to\mathbb{R}^{n}$
the projection and set $G=\mathbb{R}^{n}\times\mathbb{T}^{m}$, $K=\mathbb{T}^{m}$.
If $S\subseteq G$ generates $G$, then $p_{1}(S)$ generates $\mathbb{R}^{n}$,
so clearly there is some $E\subseteq S$ such that $p_{1}(E)$ is
a basis of $\mathbb{R}^{n}$, and obviously $S\backslash E$ projects
to a generating set of $G/\langle E\rangle$ (since $S$ does, and
$E$ projects to the trivial element there). Conversely, assume there
is $E\subseteq S$ such that $p_{1}(E)$ is a basis of $\mathbb{R}^{n}$
and $S\backslash E$ projects to a generating set of $G/\langle E\rangle$.
Denote $\tau:G\to G/\langle E\rangle$. Since $\tau(\left\langle S\right\rangle )$
is dense, we get that $\ker\tau\cdot\left\langle S\right\rangle $
is dense in $G$; but $\ker\tau\subseteq\left\langle S\right\rangle $,
so we are done.

The `moreover' part follows from the previous lemma: if $p_{1}(E)$
is a basis of $\mathbb{R}^{n}$, then $\left\langle E\right\rangle $
is a lattice, so that $G/\left\langle E\right\rangle $ is a torus
of dimension $n+m$.
\end{proof}

\subsection{The Redundancy Rank}
\begin{defn}
A generating subset $S$ of a topological group $G$ is \emph{irredundant
}if, for every $s\in S$, the set $S\backslash\left\{ s\right\} $
no longer generates $G$. The \emph{redundancy rank }of a topological
group $G$, denoted $\sr G$, is the maximal size of an irredundant
finite generating subset. In other words, it is the minimal $m\in\mathbb{N}\cup\left\{ \infty\right\} $
such that every generating subset $S$ with $\left|S\right|>m$ is
redundant. 
\end{defn}

In this subsection we calculate the redundancy rank of connected abelian
Lie groups.
\begin{prop}
\label{prop:sr-of-RnTm}The redundancy rank of $\mathbb{R}^{n}\times\mathbb{T}^{m}$
is $2n+m$ (for every $n,m\geqslant0$).
\end{prop}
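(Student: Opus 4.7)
My goal is to prove the two inequalities $\sr{\R^n\times\T^m}\ge 2n+m$ and $\sr{\R^n\times\T^m}\le 2n+m$.

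For the lower bound, I would exhibit an explicit irredundant generating set of size $2n+m$. Writing $\T^m=(\R/\Z)^m$, letting $e_1,\ldots,e_n$ be the standard basis of $\R^n$, fixing irrationals $\alpha_1,\ldots,\alpha_m$, and letting $t_j\in\T^m$ have $j$-th coordinate $\alpha_j\bmod 1$ and other coordinates zero, consider
\[
S=\bigl\{(e_i,0),(\sqrt{2}\,e_i,0):1\le i\le n\bigr\}\cup\bigl\{(0,t_j):1\le j\le m\bigr\}.
\]
Applying \lemref{generation-of-RnTm} with $E=\{(e_i,0)\}$ as the lattice-generating subset, the images of $S\setminus E$ in $\T^{n+m}$ are each supported on a single coordinate with irrational value, so $S$ generates $\R^n\times\T^m$. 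For irredundancy, a coordinate check suffices: removing any $(e_i,0)$ or $(\sqrt{2}\,e_i,0)$ leaves the projection to the $i$-th $\R$-factor equal to $\sqrt{2}\Z$ or $\Z$ respectively (discrete in $\R$), and removing $(0,t_j)$ kills the projection to the $j$-th $\T$-factor.

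For the upper bound, I would use \lemref{generation-of-RnTm} to reduce to the compact case. Given a generating set $S\subseteq \R^n\times\T^m$ with $|S|\ge 2n+m+1$, choose $E\subseteq S$ of size exactly $n$ with $p_1(E)$ a basis of $\R^n$ (possible since $S$ projects onto $\R^n$); by \lemref{latticeprojection}, $\langle E\rangle$ is a lattice and $(\R^n\times\T^m)/\langle E\rangle\cong\T^{n+m}$. The image of $S\setminus E$ generates $\T^{n+m}$ and has cardinality at least $n+m+1$. Assuming the torus case $\sr{\T^k}\le k$ proved below, some element $s\in S\setminus E$ becomes redundant modulo $\langle E\rangle$, and invoking \lemref{generation-of-RnTm} in the reverse direction shows that $S\setminus\{s\}$ still generates $\R^n\times\T^m$.

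The main obstacle is to prove $\sr{\T^k}\le k$, which I plan to do via Pontryagin duality, identifying continuous characters $\T^k\to\T$ with $\Z^k$. Suppose towards contradiction that $g_1,\ldots,g_{k+1}$ densely generate $\T^k$ with no $g_i$ redundant. Then each $H_i:=\overline{\langle g_j:j\ne i\rangle}$ is a proper closed subgroup, and its annihilator $A_i:=\{\chi\in\Z^k:\chi|_{H_i}=0\}$ is a nonzero subgroup of $\Z^k$. The key technical step is to choose $\chi_i\in A_i$ with $\chi_i(g_i)$ of infinite order in $\T$: otherwise all values $\chi(g_i)$ for $\chi\in A_i$ would be torsion, so letting $N$ be the lcm of the orders of these values on a finite generating set of $A_i$ would yield $Ng_i\in H_i$, whence $\langle H_i,g_i\rangle=\bigcup_{\ell=0}^{N-1}(H_i+\ell g_i)$ is a finite union of closed cosets equal to $\T^k$, contradicting the connectedness of $\T^k$ (since $H_i\subsetneq\T^k$ and the cosets are disjoint by minimality of $N$). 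With each $\chi_i(g_i)$ of infinite order, the $k+1$ characters $\chi_1,\ldots,\chi_{k+1}\in\Z^k$ admit a nontrivial integer relation $\sum_i c_i\chi_i=0$; evaluating at $g_j$ and using $\chi_i(g_j)=0$ for $i\ne j$ leaves $c_j\chi_j(g_j)=0$ in $\T$, which forces $c_j=0$ for every $j$, the desired contradiction.
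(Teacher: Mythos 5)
Your proposal is correct, and while the lower bound is essentially the paper's (an explicit irredundant set vs.\ the paper's product lemma applied to small irredundant sets of $\R$ and $\T$), you take a genuinely different route for the key technical step, the torus bound $\sr{\T^k}\leqslant k$. The paper proves this by induction on $\dim G$: it fixes $s_0\in S$, replaces $S\setminus\{s_0\}$ by suitable multiples lying in the connected component of $\overline{\langle S\setminus\{s_0\}\rangle}$ (a lower-dimensional torus), and applies the induction hypothesis there. You instead argue via Pontryagin duality: each annihilator $A_i=H_i^\perp\leqslant\Z^k$ is nonzero, a nice connectedness argument lets you choose $\chi_i\in A_i$ with $\chi_i(g_i)$ of infinite order (otherwise $\T^k$ would be a finite disjoint union of closed cosets of the proper subgroup $H_i$), and then the rank bound in $\Z^k$ forces a nontrivial relation $\sum c_i\chi_i=0$ which, evaluated on $g_j$, gives $c_j=0$ for all $j$. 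Your argument is cleaner and makes the role of the integer $k$ (the rank of the dual group) transparent; the paper's is more elementary and stylistically uniform with its other connected-component inductions. One small point worth making explicit in the upper-bound reduction: the images of $S\setminus E$ in $\T^{n+m}$ need not be distinct (you only know $|S\setminus E|\geqslant n+m+1$, not that the image has that many elements); but if two elements of $S\setminus E$ coincide modulo $\langle E\rangle$, either one is immediately redundant by the same reverse application of \lemref{generation-of-RnTm}, so the conclusion still holds. (The paper's proof has the same implicit gap.)
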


\begin{proof}
We need to prove two things: one, there is an irredundant generating
set of $\mathbb{R}^{n}\times\mathbb{T}^{m}$ of size (at least) $2n+m$,
and two, every finite generating set of $\mathbb{R}^{n}\times\mathbb{T}^{m}$
of size strictly greater than $2n+m$ has a redundant element.

First, by Kronecker's theorem it's easy to see that for any irrational
$\alpha\in\mathbb{R}$ the sets $\{\alpha+\mathbb{Z}\}$ and $\{\alpha,1\}$
are irredundant generating sets of $\mathbb{T}$ and $\mathbb{R}$
respectively. Now, observe that if $S$ is an irredundant generating
set of a group $G$ and $T$ is an irredundant generating set of a
group $H$ then $S\times\{1_{H}\}\cup\{1_{G}\}\times T$ is an irredundant
generating set of $G\times H$. Thus we can construct an irredundant
generating set of $\mathbb{R}^{n}\times\mathbb{T}^{m}$ of size $2n+m$,
as needed.

Now, let $S$ be a generating set of $\mathbb{R}^{n}\times\mathbb{T}^{m}$,
$|S|>2n+m$. By Lemma \ref{lem:generation-of-RnTm}, there is $E\subseteq S$
of size $n$ such that $S\backslash E$ projects to a generating set
of $(\mathbb{R}^{n}\times\mathbb{T}^{m})/\langle E\rangle\cong\mathbb{T}^{n+m}$.
So we just need to show that every generating set of a torus $G$
of size strictly greater than $\dim G$ has a redundant element. We
prove this by induction on $\dim G$. Assume the claim is true for
tori of dimension strictly smaller than $\dim G$, and let us prove
it for $G$. Suppose $S\subseteq G$ generates $G$ and $|S|>\dim G$.
Pick some $s_{0}\in S$, and let $n$ be the index of $\conn{\overline{\langle S\backslash\{s_{0}\}\rangle}}$
inside $\overline{\langle S\backslash\{s_{0}\}\rangle}$. Then $T=\left\langle ns\right\rangle _{s\in S\backslash\{s_{0}\}}$
is of finite index in $\overline{\langle S\backslash\{s_{0}\}\rangle}$
and is contained in $\conn{\overline{\langle S\backslash\{s_{0}\}\rangle}}$,
hence it is equal to $\conn{\overline{\langle S\backslash\{s_{0}\}\rangle}}$.
If $T=G$, then $s_{0}$ is redundant and we are done; otherwise,
$\dim T<\dim G$ and $|\{ns\}_{s\in S\backslash\{s_{0}\}}|>\dim T,$
so, by the induction hypothesis, there is a redundant element.
\end{proof}

\subsection{Lifting Generators}

First, let us recall the following fact: 
\begin{lem}
\label{lem:quotients-by-connected-split-(abelian)}Let $G$ be a connected
abelian Lie group, $A\leqslant G$ a closed connected subgroup. Then
there is a closed connected subgroup $B$ such that the addition map
$A\times B\to G$ is an isomorphism.
\end{lem}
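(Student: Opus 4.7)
The plan is to establish the direct sum decomposition by separately splitting the maximal torus and its quotient, exploiting the structure $G \cong \R^n \times \T^m$ of a connected abelian Lie group.

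Let $T$ denote the maximal torus of $G$, so that $G/T \cong \R^n$ and there is a splitting $G = \R^n \times T$. Let $T_A$ denote the maximal torus of $A$. A preliminary observation is the identity $A \cap T = T_A$: the inclusion $T_A \subseteq A \cap T$ holds because $T_A$ is a compact connected subgroup of $G$ and hence lies in the unique maximal such subgroup $T$; the reverse inclusion holds because $A \cap T$ is a compact subgroup of the abelian Lie group $A \cong \R^a \times \T^b$, and every compact subgroup of $A$ is contained in the maximal torus $T_A$. In particular $A \cap T$ is connected, and the projection $\pi: G \to G/T \cong \R^n$ sends $A$ onto a linear subspace $V \subseteq \R^n$.

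Next, I would construct $B$ in two compatible pieces. First, choose a subtorus $T' \leq T$ with $T = T_A \oplus T'$; this is possible because every subtorus of a torus is a direct summand, which follows for instance from Pontryagin duality applied to the split short exact sequence of finitely generated free abelian groups $0 \to \widehat{T/T_A} \to \widehat{T} \to \widehat{T_A} \to 0$. Second, choose any linear complement $V'$ to $V$ in $\R^n$. Using the splitting $G = \R^n \times T$, set
\[
B := V' \times T',
\]
a closed connected subgroup of $G$.

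Finally, I would verify that the addition map $A \times B \to G$ is a Lie group isomorphism. For injectivity, if $x \in A \cap B$ then $\pi(x) \in V \cap V' = 0$, so $x \in T$, and then $x$ lies in $(A \cap T) \cap (B \cap T) = T_A \cap T' = 0$. For surjectivity, $\pi(A + B) = V + V' = \R^n$, and $A + B$ contains $T_A + T' = T = \ker \pi$, so $A + B = G$. This gives a continuous bijective homomorphism between connected Lie groups of the same dimension, which is a Lie group isomorphism by the open mapping theorem (or by observing that the differential at the identity is an isomorphism of Lie algebras and applying the inverse function theorem). The only nontrivial ingredient is the splitting of subtori inside tori; everything else is straightforward bookkeeping.
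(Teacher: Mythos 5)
The paper states this lemma without proof, introducing it as a recalled fact, so there is no paper argument to compare against. Your proof is correct and complete. The decisive observations all hold: $A\cap T=T_A$ follows because the projection of the compact group $A\cap T$ to the vector factor of $A\cong\R^a\times\T^b$ must vanish, so $A\cap T\subseteq T_A$, while the reverse containment is immediate; the image $\pi(A)=V$ is a genuine linear subspace because $A=\exp(\mathfrak{a})$ (exponential is surjective for connected abelian Lie groups) and $\pi(\exp(\mathfrak{a}))=d\pi(\mathfrak{a})$ since $\exp$ is the identity on $\R^n$; and the subtorus $T_A$ splits off $T$ exactly as you argue via the projectivity of the free abelian group $\widehat{T_A}$. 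The verification of injectivity and surjectivity of $A\times B\to G$ is correct, and the dimension count $\dim A+\dim B=\dim G$ needed for the inverse-function-theorem variant follows from $A/T_A\cong V$, which your computation implicitly uses. One could phrase the argument slightly differently by working with the universal cover $\R^{n+m}\to G$ and reducing to the splitting of a subspace containing the kernel lattice rationally, but your route—splitting the torus and the vector part separately and assembling $B=V'\times T'$—is at least as clean.
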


\begin{lem}
\label{lem:correction}Let $G$ be an $n$-dimensional torus. Let
$x_{1},\dots,x_{n}\in G$ be some elements, and let $\Delta\leqslant G$
be a (not-necessarily-closed) subgroup which is topologically finitely
generated. Assume $\overline{\left\langle x_{i}\right\rangle _{i=1}^{n}}+\overline{\Delta}=G$.
Then there are $\delta_{1},\dots,\delta_{n}\in\Delta$ such that $x_{1}+\delta_{1},\dots,x_{n}+\delta_{n}$
are topological generators of $G$.
\end{lem}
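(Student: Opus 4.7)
My plan is to proceed by induction on $n = \dim G$. The base case $n = 1$ reduces to showing that some $x_1 + \delta$ (for $\delta \in \Delta$) has infinite order in $\mathbb{T}$: if $x_1$ already has infinite order we take $\delta = 0$; otherwise $x_1$ is a torsion element, the hypothesis forces $\overline{\Delta} = \mathbb{T}$, and since any finitely generated subgroup of $\mathbb{Q}/\mathbb{Z}$ is finite while $\Delta$ is dense, $\Delta$ must contain an element $\delta$ of infinite order, so that $x_1 + \delta$ has infinite order as well (being the sum of a torsion element and a non-torsion one).

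For the inductive step ($n \geq 2$), I would first dispose of the case $\dim \overline{\Delta} = 0$: then $\overline{\Delta}$ is finite, so $\overline{\langle x_i \rangle}$ has finite index in the connected group $G$, hence equals $G$, and we take $\delta_i = 0$. Otherwise the identity component $T_0 := \overline{\Delta}^{0}$ is a subtorus of dimension $k \geq 1$. Using Lemma~\ref{lem:quotients-by-connected-split-(abelian)} I decompose $G = T_0 \oplus T_1$; the crucial observation is that $\Delta \cap T_0$ is dense in $T_0$, because $T_0$ is clopen in the Lie group $\overline{\Delta}$ and $\Delta$ is dense in $\overline{\Delta}$. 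Applying Proposition~\ref{prop:sr-of-RnTm} to the torus $T_1 \cong G/T_0$ (whose redundancy rank equals its dimension $n-k$), I may reindex so that $\pi(x_1),\dots,\pi(x_{n-k})$ already generate $T_1$.

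The main step is then to pick a suitable $1$-dimensional subtorus $S \subseteq T_0$, apply the inductive hypothesis to the $(n-1)$-dimensional torus $G/S$ with the images of $x_1,\dots,x_n$ and of $\Delta$, and lift back. The inductive step produces $\delta_i \in \Delta$ such that the images $\pi_S(x_i + \delta_i)$ generate $G/S$, which is equivalent to $\overline{\langle x_i + \delta_i\rangle} + S = G$. A subsequent perturbation $\delta_i \mapsto \delta_i + \eta_i$ with $\eta_i \in \Delta \cap S$ (invisible modulo $S$) is then used to ensure that $S$ is contained in the closed subgroup generated. The input here is a non-trivial integer relation among the $n$ elements $\pi(x_i + \delta_i) \in T_1$, which exists because they lie in a torus of dimension $n - k < n$; this yields an integer combination $c \in T_0$ of the $(x_i + \delta_i)$'s, and choosing $S$ compatibly with $c$ allows the density of $\Delta \cap S$ in the circle $S$, together with the $n = 1$ case applied to $S$, to promote this combination into a topological generator of $S$.

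The hardest part will be the compatibility of these two stages: $S$ must be chosen before applying the inductive hypothesis, yet the combination $c$ that determines the ``right'' $S$ depends on the $\delta_i$ coming out of the inductive step. I expect this to be resolved by a careful a priori choice of $S$ (for instance taking $S = \overline{\langle \gamma \rangle}^{0}$ for a sufficiently generic $\gamma \in \Delta \cap T_0$), combined with the freedom afforded by the redundancy of the last $k$ elements $\pi(x_{n-k+1}),\dots,\pi(x_n)$ in $T_1$; this redundancy should provide enough flexibility to steer the inductive output so that $c$ indeed lands in $S$, after which the final Kronecker-style correction inside $S$ completes the argument.
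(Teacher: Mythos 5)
Your plan diverges substantially from the paper's proof, which is direct rather than inductive, and the inductive step as you describe it has a genuine gap that you yourself flag but do not resolve.

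The circular dependency you identify is the real problem, and it is not a minor technicality. After applying the inductive hypothesis in $G/S$, you obtain $\delta_i$ and hence a specific integer combination $c = \sum a_i(x_i+\delta_i)\in T_0$; for the final correction to work you need $c$ (after a shift by $\sum a_i\eta_i$ with $\eta_i\in\Delta\cap S$) to be a topological generator of $S$, which forces $c\in S$ since all the $\eta_i$ already lie in $S$. But $c$ depends on $\delta_i$, which come out of the inductive hypothesis applied to $G/S$, which already required $S$ to be fixed. Your suggestions -- taking $S=\overline{\langle\gamma\rangle}^0$ for generic $\gamma\in\Delta\cap T_0$, or using the redundancy of the trailing elements to ``steer'' the inductive output -- are not arguments; in particular, there is no mechanism shown by which the inductive hypothesis in $G/S$ can be forced to produce $\delta_i$ with $c\in S$. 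Also note that for an arbitrary $1$-dimensional subtorus $S\subseteq T_0$ the intersection $\Delta\cap S$ can be trivial (e.g.\ $\Delta=\langle(\sqrt2,0,0),(0,\sqrt3,0)\rangle\leq\mathbb{T}^3$, $S$ the diagonal circle of $T_0=\mathbb{T}^2\times\{0\}$), so the constraint ``$\Delta\cap S$ dense in $S$'' and the constraint ``$c\in S$'' must be satisfied simultaneously, and you have not shown this is possible. A further, smaller mismatch: the inductive hypothesis you would be invoking in the $(n-1)$-dimensional torus $G/S$ speaks of $n-1$ elements, while you have $n$ images; one must either reformulate the lemma to allow extra elements or justify dropping one while preserving the density hypothesis, and neither is addressed.

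For comparison, the paper's proof is non-inductive and finesses the difficulty entirely. It sets $A=\overline{\langle x_i\rangle}$ with $m=\dim A$, replaces the $x_i$ and the topological generators of $\Delta$ by suitable integer multiples $kx_i$ and $ky_j$ landing in $A^0$ and $\overline{\Delta}^0$, uses the redundancy rank of tori twice (once to select $m$ of the $kx_i$ generating $A^0$, once to select $n-m$ of the $ky_j$ generating a complement $B$ of $A^0$), and then observes directly that $\{x_i\}_{i\leq m}\cup\{x_j+y_j\}_{j>m}$ generates $G$. This avoids any choice of auxiliary subtorus $S$ and any lifting across a quotient, which is precisely where your approach gets stuck.
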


\begin{proof}
Set $A=\overline{\left\langle x_{i}\right\rangle _{i=1}^{n}}$, and
set $k=[A:\conn A]$. The group $\overline{\left\langle kx_{i}\right\rangle _{i=1}^{n}}$
is certainly contained in $\conn A$, and is of finite index in it,
and hence equals $\conn A$. Set $m=\dim A$; the redundancy rank
of $A\cong\mathbb{T}^{m}$ is $m$, so up to rearrangement we may
assume $\conn A=\overline{\left\langle kx_{i}\right\rangle _{i=1}^{m}}$.

Observe that $\conn{\overline{\Delta}}\cong\mathbb{T}^{\dim\overline{\Delta}}$.
Let $\delta_{1},\dots,\delta_{N}$ be topological generators of $\Delta$.
Then they also topologically generate $\overline{\Delta}$. Therefore,
setting $\ell=[\overline{\Delta}:\conn{\overline{\Delta}}]$, we get
that $\ell\delta_{1},\dots,\ell\delta_{N}$ topologically generate
$\conn{\overline{\Delta}}$. Thus, since the redundancy rank of $\conn{\overline{\Delta}}$
is $\dim\overline{\Delta}$ and $\dim\overline{\Delta}\leqslant n$,
there are some $n$ elements among $\ell\delta_{1},\dots,\ell\delta_{N}$
which already topologically generate $\conn{\overline{\Delta}}$.
Denote these elements (which are contained in $\Delta$) by $y_{1},\dots,y_{n}$,
so that $\overline{\left\langle y_{i}\right\rangle _{i=1}^{n}}=\conn{\overline{\Delta}}$.

Next, observe that, since $\conn{\overline{\Delta}}$ is of finite
index in $\overline{\Delta}$ and $\conn A$ is of finite index in
$A$, the group $\conn A+\conn{\overline{\Delta}}$ is of finite index
in $A+\overline{\Delta}=G$. Since $G$ is connected, this means $G=\conn A+\conn{\overline{\Delta}}$,
so $G$ is topologically generated by $\left\{ kx_{i}\right\} _{i=1}^{n}\cup\left\{ y_{j}\right\} _{j=1}^{n}$,
and by the exact same argument one sees that $G$ is also topologically
generated by $\left\{ kx_{i}\right\} _{i=1}^{n}\cup\left\{ ky_{j}\right\} _{j=1}^{n}$
(since once again the group topologically generated by this set is
of finite index in $G$). Let $B$ be a closed connected subgroup
of $G$ such that $G=\conn A+B$ and $\conn A\cap B=\left\{ 0\right\} $,
and consider the well-defined projection $q:G\to B$. Since $\left\{ kx_{i}\right\} _{i=1}^{n}\cup\left\{ ky_{j}\right\} _{j=1}^{n}$
topologically generates $G$, its projection in $B$ topologically
generates $B$; since $q(kx_{i})=0$ for each $i$, we see that $\left\{ q(ky_{j})\right\} _{j=1}^{n}$
topologically generates $B$. Since the redundancy rank of $B\cong\mathbb{T}^{n-m}$
is $n-m$, we may assume (up to rearranging the $y_{j}$'s) that
$B$ is already generated by $\left\{ q(ky_{j})\right\} _{j=m+1}^{n}$.
Since $\overline{\left\langle kx_{i}\right\rangle _{i=1}^{m}}=\conn A$,
it easily follows $\left\{ kx_{i}\right\} _{i=1}^{m}\cup\left\{ ky_{j}\right\} _{j=m+1}^{n}$
topologically generates $G$.

We are almost done. We now claim $\left\{ x_{i}\right\} _{i=1}^{m}\cup\left\{ x_{j}+y_{j}\right\} _{j=m+1}^{n}$
topologically generates $G$. This is immediate: the group topologically
generated by this set obviously contains $kx_{i}$ for $i=1,\dots,m$,
and hence contains $\conn A$, and hence contains $kx_{j}$ for $j=m+1,\dots,n$
as well. Therefore, since it contains $x_{j}+y_{j}$ for $j=m+1,\dots,n$,
it must also contain $ky_{j}$ for $j=m+1,\dots,n$. But we saw $\left\{ kx_{i}\right\} _{i=1}^{m}\cup\left\{ ky_{j}\right\} _{j=m+1}^{n}$
topologically generates $G$, so we are done.
\end{proof}
\begin{lem}
\label{lem:lifting-basis}Let $G$ be a connected abelian Lie group,
and let $\Delta$ be a (not-necessarily-closed) subgroup. Let $n$
and $\ell$ be the free ranks of $G$ and $G/\overline{\Delta}$ respectively.
If $x_{1},\dots,x_{\ell}\in G$ are such that their projections in
$G/\overline{\Delta}$ generate a lattice in $G/\overline{\Delta}$,
and $x_{\ell+1},\dots,x_{n}\in G$ are any elements, then there are
$\delta_{1},\dots,\delta_{n}\in\Delta$ such that $x_{1}+\delta_{1},\dots,x_{n}+\delta_{n}$
generate a lattice in $G$. 
\end{lem}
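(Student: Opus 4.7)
The plan is to apply \lemref{latticeprojection} to reduce the problem to finding perturbations inside $V := G/K$ (where $K$ is the maximal torus of $G$) that make the projected vectors an $\R$-basis of $V \cong \R^n$. Let $\pi : G \to V$ be the projection. Since $K$ is compact, $\pi$ is a closed map, so $W := \pi(\overline{\Delta})$ is a closed subgroup of $V$ equal to $\overline{\pi(\Delta)}$; in particular, $\pi(\Delta)$ is dense in $W$. By \lemref{latticeprojection}, the elements $x_1+\delta_1,\dots,x_n+\delta_n$ generate a lattice in $G$ if and only if their images $y_i+\epsilon_i$ (with $y_i := \pi(x_i)$ and $\epsilon_i := \pi(\delta_i)$) form an $\R$-basis of $V$. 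Thus the task reduces to choosing suitable $\epsilon_i \in \pi(\Delta)$ and lifting each to a $\delta_i \in \Delta$.

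Next, I would choose coordinates so that $V = \R^c \oplus \R^d \oplus \R^\ell$ and $W = \R^c \oplus \Z^d \oplus \{0\}$, with $c+d+\ell = n$; here one verifies that the free rank of $V/W \cong \T^d \oplus \R^\ell$ indeed equals the free rank $\ell$ of $G/\overline{\Delta}$, since the natural map $G/\overline{\Delta} \to V/W$ has compact connected kernel $K/(K\cap\overline{\Delta})$, which one checks preserves the free rank. Writing $y_i = (a_i, b_i, c_i)$, the hypothesis (via \lemref{latticeprojection} applied to $V/W$) becomes: $c_1, \dots, c_\ell$ form a basis of $\R^\ell$. I would then set $\epsilon_1 = \cdots = \epsilon_\ell = 0$. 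Stacking the vectors $y_i + \epsilon_i$ as rows of an $n \times n$ matrix and splitting the columns into a $(c+d)$-block and an $\ell$-block, the $\ell$-block contains only the fixed entries $c_1, \dots, c_n$ (since each $\epsilon_i \in W$ has zero $\R^\ell$-component). Row reducing using $c_1, \dots, c_\ell$ to zero out the $\ell$-columns in rows $\ell+1, \dots, n$, the determinant factors (up to sign) as $\det(c_1, \dots, c_\ell) \cdot \det(\tilde u_{\ell+1} + \epsilon_{\ell+1}, \dots, \tilde u_n + \epsilon_n)$, where $\tilde u_i \in \R^{c+d}$ depend only on the fixed $y_j$'s. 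So the remaining problem is to choose $\epsilon_{\ell+1}, \dots, \epsilon_n \in \pi(\Delta)$ making $\tilde u_{\ell+1} + \epsilon_{\ell+1}, \dots, \tilde u_n + \epsilon_n$ linearly independent in $\R^{c+d}$.

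For this last step, the set of tuples $(\epsilon_{\ell+1}, \dots, \epsilon_n) \in W^{c+d}$ satisfying the independence condition is the complement in $W^{c+d}$ of the vanishing locus of a determinant polynomial. Since $\Z$ is Zariski dense in $\R$, the group $W = \R^c \oplus \Z^d$ is Zariski dense in $\R^{c+d}$, so $W^{c+d}$ is Zariski dense in $(\R^{c+d})^{c+d}$; hence the complement is nonempty and open (Zariski, and thus Euclidean) in $W^{c+d}$. Using density of $\pi(\Delta)$ in $W$, I would then pick $\epsilon_i \in \pi(\Delta)$ inside this open set, and lift each to a $\delta_i \in \Delta$. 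The main obstacle is precisely the nonclosedness of $\pi(\Delta) \subseteq W$: one needs enough perturbations inside $\pi(\Delta)$ itself, not merely in its closure $W$. The resolution is that linear independence is an open condition, so the topological density of $\pi(\Delta)$ in $W$ combined with the nonemptiness guaranteed by Zariski density of $W$ in $\R^{c+d}$ suffices.
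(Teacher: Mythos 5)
Your proof is correct and takes a genuinely different route from the paper's. Both arguments begin by applying \lemref{latticeprojection} to pass to $V=G/K$ and reduce the problem to finding a basis of $\R^n$, but the subsequent strategies diverge. The paper keeps the target group $H=G/\overline{\Delta}$ and the induced linear surjection $\varphi:G/K_G\to H/K_H$ in play; it proves that $\mathrm{Span}(\pi(\Delta))=\ker\varphi$ by a somewhat delicate topological argument about identity neighbourhoods, then selects $\varepsilon_{\ell+1},\dots,\varepsilon_n\in\Delta$ whose projections complete $\{\pi(x_i)\}_{i=1}^\ell$ to a basis, and finally perturbs by a large scalar $N$ (a continuity-of-determinant argument). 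You instead invoke the structure theorem for closed subgroups of $\R^n$ to write $W=\overline{\pi(\Delta)}$ in normal form $\R^c\oplus\Z^d\oplus\{0\}$, factor the determinant into an $\ell\times\ell$ block controlled by the hypothesis and a $(c+d)\times(c+d)$ block depending on the free perturbations, and then use Zariski density of $\Z$ in $\R$ to find a point of $W^{c+d}$ where the determinant polynomial is nonzero (followed by a Euclidean-density approximation to pass from $W$ to $\pi(\Delta)$). Your route avoids the paper's explicit $\mathrm{Span}(\pi(\Delta))=\ker\varphi$ claim by absorbing it into the choice of coordinates, and replaces the one-parameter scaling perturbation with a Zariski-density argument; the paper's version is slightly more elementary in the tools it cites, while yours is arguably cleaner once the structure theorem is in hand. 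The free-rank bookkeeping you perform (checking that the compact connected kernel $K/(K\cap\overline{\Delta})$ preserves free rank, so that $c+d=n-\ell$ and the bottom-right block is genuinely $\ell\times\ell$) is exactly the substitute for what the paper achieves through its $\ker\varphi$ analysis, and it is correct.
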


\begin{proof}
Set $H=G/\overline{\Delta}$. Let $K_{G},K_{H}$ be the maximal tori
of $G$ and $H$ respectively. Then $G/K_{G}\cong\mathbb{R}^{n}$
and $H/K_{H}\cong\mathbb{R}^{\ell}$. Since the image of $K_{G}$
in $H$ is contained in $K_{H}$, we have a well-defined epimorphism
$\varphi:G/K_{G}\to H/K_{H}$, which is a linear map (as all continuous
homomorphisms between vector groups over $\mathbb{R}$). Denoting
by $\pi:G\to G/K_{G}$ and $p:H\to H/K_{H}$ the projections, we get
the following commutative diagram:
\[
\xymatrix{G\ar[r]\ar[d]_{\pi} & G/\Delta\ar[r] & H\ar[d]^{p}\\
G/K_{G}\ar[rr]_{\varphi} &  & H/K_{H}
}
\]
Denote by $\bar{x}_{1},\dots,\bar{x}_{n}$ the projections of $x_{1},\dots,x_{n}$
in $H$. Since $\bar{x}_{1},\dots,\bar{x}_{\ell}$ generate a lattice
in $H$, their projections in $H/K_{H}$ generate a lattice in $H/K_{H}$.
This means that $\left\{ p(\bar{x}_{i})\right\} _{i=1}^{\ell}$ is
a basis of $H/K_{H}$ when viewed as a vector space. Since $\varphi$
is linear, we obtain that $\pi(x_{1}),\dots,\pi(x_{\ell})\in G/K_{G}$
are linearly independent when we think of $G/K_{G}$ as a vector space
(since they are lifts of $p(x_{1}),\dots,p(x_{\ell})$). 

Next, we claim $\mathrm{Span}(\pi(\Delta))=\ker\varphi$. Clearly
$\mathrm{Span}(\pi(\Delta))\subseteq\ker\varphi$. Now, suppose $\pi(g)\in\ker\varphi$;
this means that the image of $g$ in $H$ is contained in $K_{H}$,
which implies that for every identity neighbourhood $W$ of $H$ there
is some nontrivial power of the image of $g$ which lies in it. Therefore,
for every neighbourhood $W$ of $\overline{\Delta}$, there is some
$n\geqslant1$ such that $g^{n}\in W$. Clearly the same holds for
neighbourhoods of $\Delta$. This means that for every neighbourhood
$W$ of $\pi(\Delta)$ there is some $n\geqslant1$ such that $n\pi(g)\in W$.
Definitely the same holds for $\mathrm{Span}(\pi(\Delta))$, which
is only bigger. But this obviously means that $ng\in\mathrm{Span}(\pi(\Delta))$
for some $n\geqslant1$, so $\pi(g)\in\mathrm{Span}(\pi(\Delta))$,
as needed. 

Thus, we can extend the linearly independent set $\left\{ \pi(x_{i})\right\} _{i=1}^{\ell}$
to a basis of $G/K_{G}$ using elements from $\pi(\Delta)$. In other
words, there are $\varepsilon_{\ell+1},\dots,\varepsilon_{n}\in\Delta$
such that $\left\{ \pi(x_{i})\right\} _{i=1}^{\ell}\cup\left\{ \pi(\varepsilon_{j})\right\} _{j=\ell+1}^{n}$
is a vector space basis of $G/K_{G}$. Therefore, by the continuity
of the determinant, there is some large $N\in\mathbb{N}$ such that
$\left\{ \pi(x_{i})\right\} _{i=1}^{\ell}\cup\left\{ \pi(x_{j}+N\varepsilon_{j})\right\} _{j=\ell+1}^{n}$
is also a basis of $G/K_{G}$. 

(To be more precise, consider the vector space decomposition $G/K_{G}=\mathrm{Span}\left\{ \pi(x_{i})\right\} _{i=1}^{\ell}\oplus\mathrm{Span}\left\{ \pi(\varepsilon_{j})\right\} _{j=\ell+1}^{n}$,
which gives us a linear projection $G/K_{G}\to\mathrm{Span}\left\{ \pi(\varepsilon_{j})\right\} _{j=\ell+1}^{n}$.
The projection of $\left\{ \pi(\varepsilon_{j})\right\} _{j=\ell+1}^{n}$
in $\mathrm{Span}\left\{ \pi(\varepsilon_{j})\right\} _{j=\ell+1}^{n}$
is a basis, so, if $N\in\mathbb{N}$ is large enough, the projection
of $\left\{ \frac{1}{N}\pi(x_{j})+\pi(\varepsilon_{j})\right\} _{j=\ell+1}^{n}$
in $\mathrm{Span}\left\{ \pi(\varepsilon_{j})\right\} _{j=\ell+1}^{n}$
is still a basis of $\mathrm{Span}\left\{ \pi(\varepsilon_{j})\right\} _{j=\ell+1}^{n}$.
Therefore $\left\{ \pi(x_{i})\right\} _{i=1}^{\ell}\cup\left\{ \frac{1}{N}\pi(x_{j})+\pi(\varepsilon_{j})\right\} _{j=\ell+1}^{n}$
is a basis of $G/K_{G}$, and hence $\left\{ \pi(x_{i})\right\} _{i=1}^{\ell}\cup\left\{ \pi(x_{j}+N\varepsilon_{j})\right\} _{j=\ell+1}^{n}$
is a basis as well.)

Denote $E=\{x_{1},\dots,x_{\ell},x_{\ell+1}+N\varepsilon_{\ell+1},\dots,x_{n}+N\varepsilon_{n}\}$,
so that $\left\langle \pi(E)\right\rangle $ is a lattice of $G/K_{G}$,
and hence $\left\langle E\right\rangle $ is a lattice of $G$ (by
\lemref{latticeprojection}), and we are done (by setting $\delta_{i}=0$
for $i=1,\dots,\ell$ and $\delta_{i}=N\varepsilon_{i}$ for $i=\ell+1,\dots,n$).
\end{proof}
\begin{prop}
\label{prop:AbelianCase}Let $G$ be a connected abelian Lie group
with maximal torus $K$. If $\Delta$ is a (not-necessarily-closed)
topologically-finitely-generated subgroup, $n\geqslant2\dim G-\dim K$
and $g_{1},\dots,g_{n}\in G$ are such that $\left\langle g_{1},\dots,g_{n}\right\rangle \Delta$
is dense in $G$, then there are $\delta_{1},\dots,\delta_{n}\in\Delta$
such that $g_{1}+\delta_{1},\dots,g_{n}+\delta_{n}$ topologically
generate $G$.
\end{prop}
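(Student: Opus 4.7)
The plan is to split the argument into two stages: first, arrange that a sub-collection of the corrected elements generates a lattice in $G$ via \lemref{lifting-basis}; second, use \lemref{correction} on the resulting torus quotient to handle the remaining elements.

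For the first stage, I would set $H = G/\overline{\Delta}$ and write $\ell$ for its free rank. Since the images $\bar{g}_1,\dots,\bar{g}_n$ topologically generate $H$, \lemref{generation-of-RnTm} together with \lemref{latticeprojection} let me reorder the $g_i$'s so that $\bar{g}_1,\dots,\bar{g}_\ell$ generate a lattice in $H$. Writing $r$ for the free rank of $G$ (noting that $\ell \leqslant r$ and that the hypothesis $n \geqslant 2\dim G - \dim K$ reads $n-r \geqslant r + \dim K$), \lemref{lifting-basis} applied to the tuple $g_1,\dots,g_r$ yields $\delta_1,\dots,\delta_r \in \Delta$ such that $E_0 = \langle g_1+\delta_1,\dots,g_r+\delta_r\rangle$ is a lattice in $G$.

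For the second stage, I would project via $\pi\colon G \to T = G/E_0$ onto a torus of dimension $r + \dim K = \dim G$. Since $\pi(g_i) \in \pi(\Delta)$ for $i \leqslant r$, the density assumption on $\langle g_1,\dots,g_n\rangle + \Delta$ passes to density of $\langle \pi(g_{r+1}),\dots,\pi(g_n)\rangle + \pi(\Delta)$ in $T$. The number $n-r$ of available elements is at least $\dim T$, but \lemref{correction} demands \emph{exactly} $\dim T$ elements; reconciling this mismatch is the main obstacle.

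I would resolve it by absorbing the surplus into the subgroup. Setting $\tilde\Delta = \pi(\Delta) + \langle \pi(g_{2r+\dim K + 1}),\dots,\pi(g_n)\rangle$, which is still topologically finitely generated, \lemref{correction} applied to the $r + \dim K$ elements $\pi(g_{r+1}),\dots,\pi(g_{2r+\dim K})$ and the subgroup $\tilde\Delta$ inside $T$ produces $\tilde\delta_i \in \tilde\Delta$ whose translates topologically generate $T$ (the density hypothesis carries over, and compactness of $T$ makes the relevant sum of closed subgroups automatically closed). Decomposing $\tilde\delta_i = \pi(\delta_i) + \sum_{j>2r+\dim K} c_{i,j}\pi(g_j)$ with $\delta_i \in \Delta$ and $c_{i,j} \in \mathbb{Z}$, and setting $\delta_j = 0$ for $j > 2r + \dim K$, the images $\pi(g_i + \delta_i)$ for $i > r$ will still topologically generate $T$: each $\pi(g_j)$ with $j > 2r + \dim K$ now appears directly as one of the translates, so the group they generate contains $\pi(g_i) + \pi(\delta_i) + \sum_j c_{i,j}\pi(g_j) = \pi(g_i) + \tilde\delta_i$ for $r < i \leqslant 2r + \dim K$, which is the topologically generating set furnished by \lemref{correction}. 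Combined with the fact that the first $r$ corrected elements already generate $E_0 = \ker\pi$, the entire tuple $g_1+\delta_1,\dots,g_n+\delta_n$ topologically generates $G$.
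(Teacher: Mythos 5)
Your proof is correct and shares the paper's two-stage structure: build a lattice $E_{0}$ in $G$ via \lemref{lifting-basis} (after reordering so that $\bar{g}_{1},\dots,\bar{g}_{\ell}$ project to a lattice in $H=G/\overline{\Delta}$), then correct on the torus $T=G/E_{0}$ via \lemref{correction}. Where you genuinely diverge is in making the input to \lemref{correction} have exactly $\dim T$ entries when $n$ exceeds $2\dim G-\dim K$. The paper first invokes the redundancy rank of the intermediate torus $H/\left\langle h_{i}\right\rangle _{i=1}^{\ell}$ to cut the generating family down to an $(\ell+m)$-element subfamily, and then leaves the final call to \lemref{correction} (which needs $\dim T$ elements, not $m$) to an unstated padding step; the closing sentence ``\lemref{correction} does the trick'' elides that bookkeeping. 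You instead absorb the surplus $\pi(g_{j})$ for $j>2r+\dim K$ into the auxiliary subgroup $\tilde{\Delta}$, keeping the count at exactly $\dim T$, and then recover honest corrections $\delta_{i}\in\Delta$ from the decomposition $\tilde{\delta}_{i}=\pi(\delta_{i})+\sum_{j}c_{i,j}\pi(g_{j})$, setting $\delta_{j}=0$ for the absorbed indices. This is a tidier mechanism for the element count: it handles $n>2\dim G-\dim K$ explicitly and also spells out the check that the relevant sum of closed subgroups of $T$ is closed (which the paper uses implicitly in verifying the hypothesis of \lemref{correction}). Both routes lean on the same two lemmas and reach the same conclusion; the paper's is shorter, yours is more self-contained at the torus stage.
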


\begin{proof}
Set $H=G/\overline{\Delta}$, and denote by $h_{1},\dots,h_{n}$ the
images of $g_{1},\dots,g_{n}$, which are topological generators of
$H$. By assumption, $n\geqslant\sr G$ (the redundancy rank of $G$). 

Let $k$ and $\ell$ be the free ranks of $G$ and $H$ respectively,
and let $K_{G},K_{H}$ be the tori of $G$ and $H$ respectively (so
$G/K_{G}\cong\mathbb{R}^{k}$ and $H/K_{H}\cong\mathbb{R}^{\ell}$),
and observe that $\ell\leqslant k$. Set $d=\dim G$, $m=\dim H$,
so that $\sr G=d+k$ and $\sr H=m+\ell$.

Since $h_{1},\dots,h_{n}$ topologically generate $H$, their projections
topologically generate $H/K_{H}$, and in particular span it as a
vector space. Therefore, we may assume (up to rearrangement) that
$h_{1},\dots,h_{\ell}$ project to a basis of $H/K_{H}$; this implies
that their projections generate a lattice in $H/K_{H}$, and hence
they generate a lattice in $H$. Now, since $h_{1},\dots,h_{n}$ topologically
generate $H$ and $h_{1},\dots,h_{\ell}$ project to the trivial element
in $H/\left\langle h_{i}\right\rangle _{i=1}^{\ell}$, we get that
$h_{\ell+1},\dots,h_{n}$ project to topological generators of $H/\left\langle h_{i}\right\rangle _{i=1}^{\ell}\cong\mathbb{T}^{m}$.
Since the redundancy rank of $H/\left\langle h_{i}\right\rangle _{i=1}^{\ell}$
is $m$, we can rearrange the $h_{i}$'s so that already $h_{n-m+1},\dots,h_{n}$
project to topological generators of $H/\left\langle h_{i}\right\rangle _{i=1}^{\ell}$.
In particular, the $\ell+m$ elements $h_{1},\dots,h_{\ell},h_{n-m+1},\dots,h_{n}$
already topologically generate $H$. Observe that $n\geqslant d+k\geqslant m+\ell$,
and hence there is no overlapping between the two sequences $1,\dots,\ell$
and $n-m+1,\dots,n$. In fact, since $n\geqslant d+k\geqslant m+k$,
we even get that there is no overlapping between $1,\dots,k$ and
$n-m+1,\dots,n$ (while $k\geqslant\ell$); we will use this below.

By \lemref{lifting-basis}, there are $\delta_{1},\dots,\delta_{k}\in\Delta$
such that $g_{1}+\delta_{1},\dots,g_{k}+\delta_{k}\in G$ generate
a lattice in $G$. Now, clearly $g_{1},\dots,g_{\ell},g_{n-m+1},\dots,g_{n}$
together with $\Delta$ generate $G$ (since $h_{1},\dots,h_{\ell},h_{n-m+1},\dots,h_{n}$
generate $H$ and $\Delta$ is dense in $\ker(G\to H)$), so (since
$g_{1},\dots,g_{k}$ project to nothing), $g_{n-m+1},\dots,g_{n}$
together with $\Delta$ project to generators of the torus $G/\left\langle g_{1},\dots,g_{k}\right\rangle $.
Therefore \lemref{correction} does the trick.
\end{proof}

\subsection{Tightness}

We now show that the bound given by \propref{AbelianCase} is tight
in the case G is non-compact.
\begin{prop}
\label{prop:lowerbnd}For every $n,m\geqslant0$ there exists an open
epimorphism $f:\mathbb{R}^{n+1}\times\mathbb{T}^{m}\to H$ onto a
(Hausdorff) Lie group $H$ and a generating system $\underline{h}\in H^{2n+m+1}$
that doesn't admit a generating lift.
\end{prop}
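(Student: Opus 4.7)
The plan is to construct an explicit counterexample for each pair $(n,m)$. I would take $G = \mathbb{R}^{n+1} \times \mathbb{T}^m$ and $H = \mathbb{R}^n \times \mathbb{T}^{m+1}$, with $f \colon G \to H$ the open epimorphism reducing the $(n+1)$-st real coordinate modulo $\mathbb{Z}$; its kernel is the copy of $\mathbb{Z}$ embedded in that factor. Writing $H = \mathbb{R}^n \times \mathbb{T}_0 \times \mathbb{T}_1 \times \cdots \times \mathbb{T}_m$, where $\mathbb{T}_0$ is the new torus factor arising from the quotient, set $\alpha = \sqrt{2} - 1$ and take as generators of $H$ the $2n$ elements $e_i$ and $\sqrt{2}\,e_i$ for $i = 1, \ldots, n$ (the standard basis of $\mathbb{R}^n$ and its $\sqrt{2}$-multiples, whose projections generate $\mathbb{Z} + \sqrt{2}\,\mathbb{Z}$, dense in each real coordinate), together with $\alpha$ placed alone in each of the $m+1$ torus coordinates. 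These $2n+m+1$ elements generate $H$ by Kronecker's theorem.

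The heart of the argument is to show that no tuple of lifts generates $G$. By Pontryagin duality it suffices to exhibit, for every choice of lift, a nontrivial continuous character $\chi \in \widehat{G} = \mathbb{R}^{n+1} \times \mathbb{Z}^m$ that vanishes on every $\tilde h_i$. The key algebraic feature is that $\alpha$ is a unit in the ring $\mathbb{Z}[\sqrt{2}]$, so the $(n+1)$-st real coordinate of any lift of $h_{2n+1}$ has the form $z_{2n+1} = \sqrt{2} - 1 + k$ for some $k \in \mathbb{Z}$, which remains invertible in $\mathbb{Q}(\sqrt{2})$. I would take the $(n+1)$-st component of the character to be $c_{n+1} = (1-k) + \sqrt{2} \in \mathbb{Z}[\sqrt{2}]$, producing $c_{n+1} z_{2n+1} = 2 - (k-1)^2 \in \mathbb{Z}$, and then declare $c_i = -z_{n+i} - z_i\sqrt{2}$ and $d_j = -z_{2n+1+j}$ in terms of the integer lift parameters. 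A short direct verification, using only the $\mathbb{Q}$-linear independence of $\{1, \sqrt{2}\}$, shows that all $2n+m+1$ conditions $\chi(\tilde h_i) \in \mathbb{Z}$ are satisfied, and the character is nontrivial because $c_{n+1} \ne 0$.

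The main obstacle lies in arranging the algebra so that it closes up cleanly in $\mathbb{Z}[\sqrt{2}]$ uniformly across all choices of lift; the choice $\alpha = \sqrt{2} - 1$ is engineered precisely so that every integer shift $\sqrt{2} - 1 + k$ is a unit in $\mathbb{Q}(\sqrt{2})$, keeping $c_{n+1}$ in $\mathbb{Z}[\sqrt{2}]$ and forcing all remaining character components into $\mathbb{Z}$ or $\mathbb{Z}[\sqrt{2}]$. The boundary case $(n,m) = (0,0)$ must be treated separately but is trivial: $H = \mathbb{T}$, and any single lift of an irrational element to $\mathbb{R}$ generates only a discrete cyclic subgroup, consistent with $\tr{\mathbb{R}} = 2$.
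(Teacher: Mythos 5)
Your construction uses the same epimorphism as the paper (up to relabelling coordinates, $G=\mathbb{R}^{n+1}\times\mathbb{T}^{m}$ collapsed onto $H=\mathbb{R}^{n}\times\mathbb{T}^{m+1}$ by reducing one real factor mod $\mathbb{Z}$) and a similar $\sqrt{2}$-based generating tuple, so the two approaches are close in spirit. Where they diverge is the verification that no lift generates: the paper changes basis via an explicit matrix $\Psi$, applies Kronecker's theorem, and then argues that a homogeneous system of $2n+2m$ equations in $2n+2m+1$ rational unknowns must have a nontrivial solution; you instead write down, for each choice of lift parameters $z_{i},k$, an explicit character $\chi=(c_{1},\dots,c_{n+1},d_{1},\dots,d_{m})\in\widehat{G}$ annihilating the whole tuple. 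I checked the arithmetic: with $c_{n+1}=(1-k)+\sqrt{2}$, $c_{i}=-z_{n+i}-z_{i}\sqrt{2}$, $d_{j}=-z_{2n+1+j}$, one gets $c_{i}+c_{n+1}z_{i}=-z_{n+i}+(1-k)z_{i}$, $\sqrt{2}\,c_{i}+c_{n+1}z_{n+i}=-2z_{i}+(1-k)z_{n+i}$, $c_{n+1}z_{2n+1}=2-(k-1)^{2}$, and $c_{n+1}z_{2n+1+j}+d_{j}\alpha=(2-k)z_{2n+1+j}$, all integers, and $c_{n+1}\neq0$. So the proof is correct, and the explicit character is cleaner and more illuminating than the paper's dimension count; it also subsumes the $(n,m)=(0,0)$ case without a separate check.

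One caveat on the narrative: the motivation you give for choosing $\alpha=\sqrt{2}-1$ is off. The shifts $\sqrt{2}-1+k$ are not generally units in $\mathbb{Z}[\sqrt{2}]$ (the norm is $(k-1)^{2}-2$, which is $\pm1$ only for $k\in\{0,2\}$), and invertibility in $\mathbb{Q}(\sqrt{2})$ is automatic for any nonzero element. What actually makes the construction close up is simply that $\alpha$ lies in $\mathbb{Z}[\sqrt{2}]\setminus\mathbb{Q}$, so you can take $c_{n+1}$ to be (minus) the Galois conjugate of $z_{2n+1}$, making $c_{n+1}z_{2n+1}$ a norm and hence an integer; the same works verbatim with $\alpha=\sqrt{2}$ (the paper's choice). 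This is a presentation issue only and does not affect the validity of the argument.
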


We will need the following general version of Kronecker's theorem:
\begin{lem}[Kronecker's theorem]
\label{lem:Kronecker}For any index set $I$, the elements $(\alpha_{1}^{i}+\mathbb{Z})_{i\in I},\dots,(\alpha_{m}^{i}+\mathbb{Z})_{i\in I}$
in $\mathbb{T}^{I}$ generate $\mathbb{T}^{I}$ if and only if the
following implication holds: if $\lambda_{1},\dots,\lambda_{n}\in\mathbb{Q}$
and $i_{1},\dots,i_{n}\in I$ are such that
\begin{gather*}
\lambda_{1}\alpha_{1}^{i_{1}}+\cdots+\lambda_{n}\alpha_{1}^{i_{n}}\in\mathbb{Q}\\
\vdots\\
\lambda_{1}\alpha_{m}^{i_{1}}+\cdots+\lambda_{n}\alpha_{m}^{i_{n}}\in\mathbb{Q}
\end{gather*}
then $\lambda_{j}=0$ for every $j=1,\dots,n$.
\end{lem}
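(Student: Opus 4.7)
The plan is to reduce the statement to the finite-dimensional case and then apply Pontryagin duality.

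First I would reduce to the case where $I$ is finite. The group $\mathbb{T}^{I}$ is equipped with the product topology, so a subset $S\subseteq\mathbb{T}^{I}$ generates $\mathbb{T}^{I}$ if and only if its image under every coordinate projection $\pi_{F}:\mathbb{T}^{I}\to\mathbb{T}^{F}$ (for $F\subseteq I$ finite) generates $\mathbb{T}^{F}$. On the other hand, the stated arithmetic condition refers only to finitely many indices $i_{1},\dots,i_{n}\in I$ at a time, and by collapsing repeated indices we may assume they are distinct. Hence both sides of the equivalence are really statements about all finite subprojections, and it suffices to prove the case $I=\{1,\dots,N\}$.

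Next I would invoke Pontryagin duality for $\mathbb{T}^{N}$. The closure $\overline{\langle a_{1},\dots,a_{m}\rangle}$ coincides with $\mathbb{T}^{N}$ precisely when its annihilator in the dual group $\widehat{\mathbb{T}^{N}}\cong\mathbb{Z}^{N}$ is trivial. A character $\chi_{c}$ corresponding to $c=(c_{1},\dots,c_{N})\in\mathbb{Z}^{N}$ sends $(x_{1}+\mathbb{Z},\dots,x_{N}+\mathbb{Z})$ to $c_{1}x_{1}+\cdots+c_{N}x_{N}+\mathbb{Z}$, so $\chi_{c}$ vanishes on every $a_{k}=(\alpha_{k}^{1}+\mathbb{Z},\dots,\alpha_{k}^{N}+\mathbb{Z})$ exactly when $c_{1}\alpha_{k}^{1}+\cdots+c_{N}\alpha_{k}^{N}\in\mathbb{Z}$ for every $k=1,\dots,m$. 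Thus $a_{1},\dots,a_{m}$ generate $\mathbb{T}^{N}$ if and only if the only $c\in\mathbb{Z}^{N}$ satisfying these $m$ integrality conditions is $c=0$.

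The final step is to upgrade this integer statement to the rational statement in the lemma. If $\lambda_{1},\dots,\lambda_{N}\in\mathbb{Q}$ satisfy $\lambda_{1}\alpha_{k}^{1}+\cdots+\lambda_{N}\alpha_{k}^{N}\in\mathbb{Q}$ for every $k$, choose a common denominator $M\in\mathbb{N}$ so that $M\lambda_{j}\in\mathbb{Z}$ for every $j$ and $M$ also clears the denominator of each rational sum; then $c_{j}=M\lambda_{j}$ yields an integer vector satisfying $c_{1}\alpha_{k}^{1}+\cdots+c_{N}\alpha_{k}^{N}\in\mathbb{Z}$ for every $k$. Conversely integers are rationals, so the two implications "the only such $c\in\mathbb{Z}^{N}$ is zero" and "the only such $\lambda\in\mathbb{Q}^{N}$ is zero" are equivalent.

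This proof is entirely standard and essentially consists of assembling classical facts; I do not anticipate any real obstacle. The only step that requires slight care is the reduction to finite $I$, where one must observe that the product topology makes a subgroup dense precisely when all finite projections are dense, and that the stated arithmetic condition is manifestly a condition on finite projections.
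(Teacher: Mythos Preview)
Your proposal is correct and follows the same approach as the paper, which simply writes ``This is immediate from Pontryagin duality.'' Your argument is just a careful unpacking of that sentence; the reduction to finite $I$ is not even strictly necessary, since the Pontryagin dual of $\mathbb{T}^{I}$ is the direct sum $\bigoplus_{I}\mathbb{Z}$, whose elements are finitely supported and therefore already encode the choice of $i_{1},\dots,i_{n}$.
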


\begin{proof}[Proof of \lemref{Kronecker}]
This is immediate from Pontryagin duality.
\end{proof}
\begin{proof}[Proof of \propref{lowerbnd}]
Let us write $G=(\mathbb{R}^{n}\times\mathbb{T}^{m})\times\mathbb{R}$
and set $H=(\mathbb{R}^{n}\times\mathbb{T}^{m})\times\mathbb{T}$,
and consider the natural quotient $f:G\to H$. Set 
\begin{align*}
\underline{h} & =\begin{pmatrix}| &  & | & | &  & | & |\\
e_{1} & \cdots & e_{n} & y_{1} & \cdots & y_{n+m} & x\\
| &  & | & | &  & | & |
\end{pmatrix}\\
 & =\begin{pmatrix}\begin{matrix}1 &  & 0\\
 & \ddots\\
0 &  & 1
\end{matrix} & \vline & \begin{matrix}\sqrt{2} &  & 0\\
 & \ddots\\
0 &  & \sqrt{2}
\end{matrix} & \vline & 0 & \vline & \begin{matrix}0\\
\vdots\\
0
\end{matrix}\\
\hline 0 & \vline & 0 & \vline & \begin{matrix}\sqrt{2}+\mathbb{Z} &  & 0\\
 & \ddots\\
0 &  & \sqrt{2}+\mathbb{Z}
\end{matrix} & \vline & \begin{matrix}0\\
\vdots\\
0
\end{matrix}\\
\hline 0 & \vline & 0 & \vline & 0 & \vline & \sqrt{2}+\mathbb{Z}
\end{pmatrix}
\end{align*}
which is in $H^{2n+m+1}=H^{\sr G-1}$, and is a generating system
of $H$, since $y_{1},\dots,y_{n+m},x$ project to generators of $H/\langle e_{i}\rangle_{i=1}^{n}$.
Assume by contradiction there is a generating lift $\underline{g}=(\hat{e}_{1},\dots,\hat{e}_{n},\hat{y}_{1},\dots,\hat{y}_{n+m},\hat{x})\in G^{2n+m+1}$;
it's of the form
\[
\begin{pmatrix}\begin{matrix}1 &  & 0\\
 & \ddots\\
0 &  & 1
\end{matrix} & \vline & \begin{matrix}\sqrt{2} &  & 0\\
 & \ddots\\
0 &  & \sqrt{2}
\end{matrix} & \vline & 0 & \vline & \begin{matrix}0\\
\vdots\\
0
\end{matrix}\\
\hline 0 & \vline & 0 & \vline & \begin{matrix}\sqrt{2}+\mathbb{Z} &  & 0\\
 & \ddots\\
0 &  & \sqrt{2}+\mathbb{Z}
\end{matrix} & \vline & \begin{matrix}0\\
\vdots\\
0
\end{matrix}\\
\hline \begin{matrix}k_{1} & \cdots & k_{n}\end{matrix} & \vline & \begin{matrix}\ell_{1} & \cdots & \ell_{n}\end{matrix} & \vline & \begin{matrix}\ell_{n+1} & \cdots & \ell_{n+m}\end{matrix} & \vline & \sqrt{2}+k_{0}
\end{pmatrix}
\]
for some $k_{0},\dots,k_{n},\ell_{1},\dots,\ell_{n+m}\in\mathbb{Z}$.
Set $S=\{\hat{e}_{i}\}_{i=1}^{n}\cup\{\hat{x}\}$; since $\underline{g}$
generates $G$, we know in particular that $\{\hat{y}_{i}\}_{i=1}^{n+m}$
projects to a generating set of $G/\langle S\rangle$. We show this
can't be true.

Consider the universal covering $\tilde{G}=\mathbb{R}^{n+m+1}$, and
lift $\underline{g}$ to $\underline{\tilde{g}}=(\tilde{e}_{1},\dots,\tilde{e}_{n},\tilde{y}_{1},\dots,\tilde{y}_{n+m},\tilde{x})\in\tilde{G}^{2n+m+1}$
thusly:
\[
\underline{\tilde{g}}=\begin{pmatrix}\begin{matrix}1 &  & 0\\
 & \ddots\\
0 &  & 1
\end{matrix} & \vline & \begin{matrix}\sqrt{2} &  & 0\\
 & \ddots\\
0 &  & \sqrt{2}
\end{matrix} & \vline & 0 & \vline & \begin{matrix}0\\
\vdots\\
0
\end{matrix}\\
\hline 0 & \vline & 0 & \vline & \begin{matrix}\sqrt{2} &  & 0\\
 & \ddots\\
0 &  & \sqrt{2}
\end{matrix} & \vline & \begin{matrix}0\\
\vdots\\
0
\end{matrix}\\
\hline \begin{matrix}k_{1} & \cdots & k_{n}\end{matrix} & \vline & \begin{matrix}\ell_{1} & \cdots & \ell_{n}\end{matrix} & \vline & \begin{matrix}\ell_{n+1} & \cdots & \ell_{n+m}\end{matrix} & \vline & \sqrt{2}+k_{0}
\end{pmatrix}.
\]
Set $\varGamma=\langle\tilde{e}_{i}\rangle_{i=1}^{n}+\langle e_{i}\rangle_{i=n+1}^{n+m}+\langle\tilde{x}\rangle,$
so that $\tilde{G}/\varGamma=G/\langle S\rangle$ and $\{\tilde{y}_{i}\}_{i=1}^{n+m}$
projects to a generating set of $\tilde{G}/\varGamma$. From here
we will arrive at a contradiction. Set
\begin{align*}
\varPsi & =\begin{pmatrix}| &  & | & | &  & | & |\\
\tilde{e}_{1} & \cdots & \tilde{e}_{n} & e_{n+1} & \cdots & e_{n+m} & \tilde{x}\\
| &  & | & | &  & | & |
\end{pmatrix}\\
 & =\begin{pmatrix}I_{n} & \vline & 0 & \vline & \begin{matrix}0\\
\vdots\\
0
\end{matrix}\\
\hline 0 & \vline & I_{m} & \vline & \begin{matrix}0\\
\vdots\\
0
\end{matrix}\\
\hline \begin{matrix}k_{1} & \cdots & k_{n}\end{matrix} & \vline & \begin{matrix}0 & \cdots & 0\end{matrix} & \vline & \sqrt{2}+k_{0}
\end{pmatrix}
\end{align*}
so that $\varPsi^{-1}$ sends $\varGamma$ to $\mathbb{Z}^{n+m+1}$,
which means $\{\varPsi^{-1}\tilde{y}_{i}\}_{i=1}^{n+m}$ generates
$\mathbb{R}^{n+m+1}/\mathbb{Z}^{n+m+1}$. One can see directly that
\[
\varPsi^{-1}=\begin{pmatrix}I_{n} & \vline & 0 & \vline & \begin{matrix}0\\
\vdots\\
0
\end{matrix}\\
\hline 0 & \vline & I_{m} & \vline & \begin{matrix}0\\
\vdots\\
0
\end{matrix}\\
\hline \begin{matrix}\frac{-k_{1}}{\sqrt{2}+k_{0}} & \cdots & \frac{-k_{n}}{\sqrt{2}+k_{0}}\end{matrix} & \vline & \begin{matrix}0 & \cdots & 0\end{matrix} & \vline & \frac{1}{\sqrt{2}+k_{0}}
\end{pmatrix}
\]
so
\[
\varPsi^{-1}\tilde{y}_{i}=(0,\dots,\sqrt{2},\dots,0,\frac{\ell_{i}-k_{i}\sqrt{2}}{\sqrt{2}+k_{0}}),
\]
where $\sqrt{2}$ is in the $i$'th position and where we let $k_{i}\coloneqq0$
for $i=n+1,\dots,m$. Therefore, by Kronecker's theorem, there are
no $\lambda_{1},\dots,\lambda_{n+m+1}\in\mathbb{Q}$ that aren't all
zero and $r_{1},\dots,r_{n+m}\in\mathbb{Q}$ such that, for all $i=1,\dots,n+m$,
\begin{equation}
\lambda_{i}\sqrt{2}+\lambda_{n+m+1}\cdot\frac{\ell_{i}-k_{i}\sqrt{2}}{\sqrt{2}+k_{0}}+r_{i}=0.\label{eq:Kro}
\end{equation}
But by multiplying both sides by $\sqrt{2}+k_{0}$ we see that for
each $i$ this equation is equivalent to
\[
\left(2\lambda_{i}+r_{i}k_{0}+\lambda_{n+m+1}\ell_{i}\right)+\left(\lambda_{i}k_{0}-\lambda_{n+m+1}k_{i}+r_{i}\right)\sqrt{2}=0,
\]
which is -- since $\lambda_{i},r_{i},k_{i},\ell_{i}$ are all rational
-- equivalent to the two equations
\begin{align*}
\left(2\lambda_{i}+r_{i}k_{0}+\lambda_{n+m+1}\ell_{i}\right) & =0,\\
\left(\lambda_{i}k_{0}-\lambda_{n+m+1}k_{i}+r_{i}\right) & =0.
\end{align*}
These are $2n+2m$ homogeneous linear equations with $2n+2m+1$ variables
($\lambda_{1},\dots,\lambda_{n+m+1}$, $r_{1},\dots,r_{n+m}$), so
there must be a nontrivial solution. But if one the $r_{i}$'s is
nontrivial, one of the $\lambda_{i}$'s must be nontrivial as well
(by Equation (\ref{eq:Kro})). Contradiction!
\end{proof}
By \cite{CG18}, if $f:\mathbb{T}^{m}\to H$ is an epimorphism onto
a Hausdorff group $H$, then any number of generators of $H$ can
be lifted to $\mathbb{T}^{m}$. If we allow non-Hausdorff targets,
the bound of \propref{AbelianCase} is tight. 
\begin{lem}
\label{lem:GasStarofaTorus}For every $n\in\mathbb{N}$ there is an
epimorphism $f:\mathbb{T}^{n}\to H$ and generators $h_{1},\dots,h_{n-1}\in H$
of $H$ that cannot be lifted to generators of $\mathbb{T}^{n}$.
\end{lem}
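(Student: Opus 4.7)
The plan is to exhibit an explicit non-Hausdorff quotient of $\T^{n}$ in which the obstruction to lifting is entirely algebraic. Fix an irrational $\alpha\in\R$ and let $\Delta\leqslant\T^{n}$ be the subgroup generated by $\alpha e_{1},\ldots,\alpha e_{n}$, where $e_{1},\ldots,e_{n}$ are the standard basis vectors; equivalently $\Delta=\{\alpha k+\Z^{n}:k\in\Z^{n}\}$. Each generator $\alpha e_{i}$ generates a dense subgroup of the $i$-th coordinate subtorus (since $\alpha$ is irrational), so $\overline{\Delta}$ contains all $n$ coordinate subtori and is therefore equal to $\T^{n}$. Let $f:\T^{n}\to H:=\T^{n}/\Delta$ be the quotient. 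Since $\Delta$ is countable and dense, it is not closed, and $H$ is a nonzero non-Hausdorff topological group.

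First I verify that $H$ carries the indiscrete topology. If $U\subseteq\T^{n}$ is a nonempty $\Delta$-invariant open set, choose $y\in U$ and a neighbourhood $W$ of $0$ with $y+W\subseteq U$; then $U\supseteq y+W+\Delta$, and since $\Delta$ is dense, $W+\Delta=\T^{n}$, forcing $U=\T^{n}$. Thus the only nonempty open subset of $H$ is $H$ itself, so every nonempty subset of $H$ is dense. I take $h_{1}=\cdots=h_{n-1}=0\in H$, which then topologically generate $H$.

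The heart of the argument is to show that no lifts $g_{1},\ldots,g_{n-1}\in\T^{n}$ of these $h_{i}$'s can topologically generate $\T^{n}$. Any such lift lies in $\Delta$, so $g_{i}=\alpha m_{i}\bmod\Z^{n}$ for some $m_{i}\in\Z^{n}$. The subgroup $M=\langle m_{1},\ldots,m_{n-1}\rangle_{\Z}\subseteq\Z^{n}$ has $\Z$-rank at most $n-1$, so its $\Q$-orthogonal complement inside $\Q^{n}$ is of positive dimension; clearing denominators, pick a nonzero $\chi\in\Z^{n}$ with $\chi\cdot m_{i}=0$ for every $i$. Viewed as a character $\T^{n}\to\T$, we have $\chi(g_{i})=\alpha(\chi\cdot m_{i})=0$ for every $i$, so $\langle g_{1},\ldots,g_{n-1}\rangle\subseteq\ker\chi$, which is a proper closed subgroup of $\T^{n}$; hence the lifts do not topologically generate $\T^{n}$.

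The only mildly delicate point is the indiscreteness of $H$; everything else is a direct consequence of the interplay between the density of $\Delta$ (which makes $0\in H$ a topological generator) and the rank constraint in $\Z^{n}$ (which confines the lifts to a proper closed subgroup of $\T^{n}$).
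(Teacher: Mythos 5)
Your proof is correct and uses essentially the same construction as the paper: take $\Delta=\langle\alpha e_{1},\dots,\alpha e_{n}\rangle$ (the paper fixes $\alpha=\sqrt{2}$) and $h_{1}=\cdots=h_{n-1}=0$, so that any lifts lie in $\Delta$. The paper's proof simply asserts that no $n-1$ elements of $\Delta$ can generate $\mathbb{T}^{n}$, whereas you supply the missing justification via a rank count in $\mathbb{Z}^{n}$ and a separating character $\chi$, which is a clean and welcome completion of the argument.
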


\begin{proof}
Let $x_{i}\in\mathbb{T}^{n}$ be the element whose $i^{\text{th}}$
coordinate is $\sqrt{2}$ and the rest of whose coordinates are zero.
Set $\Delta=\left\langle x_{1},\dots,x_{n}\right\rangle $. Take $g_{i}=0$
for $i=1,\dots,n-1$. Then $\left\langle g_{1},\dots,g_{n-1}\right\rangle \Delta=\Delta$
is dense in $\mathbb{T}^{n}$, but there are no $\delta_{1},\dots,\delta_{n-1}\in\Delta$
such that $g_{1}+\delta_{1},\dots,g_{n-1}+\delta_{n-1}$ generate
$\mathbb{T}^{n}$.
\end{proof}

\section{Semisimple Groups\label{sec:SS}}

\subsection{Preliminary Lemmas}

\begin{lem}
\label{lem:commutator-still-dense}Let $G$ be a topological group
whose commutator is dense. If $F\leqslant G$ is a dense subgroup,
then its commutator is also dense in $G$.
\end{lem}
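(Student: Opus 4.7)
The plan is to use continuity of the commutator map together with continuity of the group operations. Let $c: G \times G \to G$ denote the commutator map $(g,h) \mapsto ghg^{-1}h^{-1}$, which is continuous since multiplication and inversion are. Write $C_G = c(G \times G)$ for the set of commutators in $G$ and $C_F = c(F \times F)$ for the set of commutators in $F$; then $G' = \langle C_G \rangle$ and $F' = \langle C_F \rangle$.

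First I would note the general fact that for a continuous map $c: X \to Y$ and a dense subset $A \subseteq X$, the image $c(A)$ is dense in $c(X)$, since $c(X) = c(\overline{A}) \subseteq \overline{c(A)}$. Applied here, since $F$ is dense in $G$, the product $F \times F$ is dense in $G \times G$, so $C_F$ is dense in $C_G$.

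Next I would observe that whenever $A \subseteq B \subseteq G$ with $A$ dense in $B$, the subgroup $\langle A \rangle$ is dense in $\langle B \rangle$. Indeed, every element of $\langle B \rangle$ is a finite product of elements of $B$ and their inverses; each factor is a limit of elements of $A$, and by joint continuity of multiplication and continuity of inversion, the product is a limit of the corresponding products in $A$. Applying this with $A = C_F$ and $B = C_G$ yields that $F' = \langle C_F \rangle$ is dense in $\langle C_G \rangle = G'$.

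Finally, since $G'$ is dense in $G$ by hypothesis, composing the two density statements gives that $F'$ is dense in $G$, which is the conclusion. No step here is a real obstacle; the only point requiring a moment of care is the passage from density of generating \emph{sets} to density of the generated \emph{subgroups}, which is precisely where continuity of the group operations (not just of the commutator map) is used.
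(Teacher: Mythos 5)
Your proof is correct and rests on the same two ingredients as the paper's: continuity of the commutator map together with continuity of the group operations, used to push density of $F$ in $G$ through to density of $F'$. The only difference is organizational — the paper argues in one shot (take $x\in G$, a neighbourhood $V$, a product of commutators $[x_1,x_2]\cdots[x_{2n-1},x_{2n}]\in V$, shrink to neighbourhoods, and replace each $x_i$ by $f_i\in U_i\cap F$), whereas you factor the same continuity argument into two general lemmas (density is preserved by continuous images, and a set dense in a generating set topologically generates the same closed subgroup, the latter most cleanly seen by noting $\overline{\langle A\rangle}$ is a closed subgroup containing $\overline{A}\supseteq B$), but the content is identical.
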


\begin{proof}
Suppose $x\in G$ and $V\ni x$ is an open neighbourhood; then by
assumption there are $x_{1},\dots,x_{2n}\in G$ such that $[x_{1},x_{2}]\cdots[x_{2n-1},x_{2n}]\in V$,
which means there are open neighbourhoods $U_{i}$ of $x_{i}$ ($i=1,\dots,2n$)
such that $[U_{1},U_{2}]\cdots[U_{2n-1},U_{2n}]\subseteq V$, and
by assumption there are $f_{i}\in U_{i}\cap F$ ($i=1,\dots,2n$),
so $[f_{1},f_{2}]\cdots[f_{2n-1},f_{2n}]$ belongs to $F'$ and to
$V$.
\end{proof}

\begin{lem}
\label{lem:CentralQuotient}Let $G$ be a topological group whose
commutator is dense. Then every central subgroup of $G$ is non-generating;
in particular, if $f:G\to H$ is an open homomorphism whose kernel
is central, then $f$ is absolutely Gasch\"utz. 
\end{lem}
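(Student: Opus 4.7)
The plan is to prove the main assertion --- every central subgroup $Z\leqslant Z(G)$ is non-generating --- by exploiting the fact that central elements disappear from commutators, together with the density of $[G,G]$. The ``in particular'' clause will then follow immediately from the earlier observation that, for any non-generating normal subgroup $N\trianglelefteqslant G$, the quotient map $G\to G/N$ is absolutely Gasch\"utz.

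For the main statement, I would fix a central subgroup $Z\leqslant G$ and a subgroup $H\leqslant G$ with $\overline{HZ}=G$, and aim to show $\overline{H}=G$. The key identity is that for $h,h'\in H$ and $z,z'\in Z$, centrality of $z,z'$ gives
\[
[hz,h'z']=hz\,h'z'\,z^{-1}h^{-1}\,z'^{-1}h'^{-1}=[h,h'],
\]
so the continuous commutator map $c:G\times G\to G$, $(x,y)\mapsto[x,y]$, carries $HZ\times HZ$ into $H'\subseteq H$. Since $HZ\times HZ$ is dense in $G\times G$, continuity yields
\[
c(G\times G)=c(\overline{HZ\times HZ})\subseteq\overline{c(HZ\times HZ)}\subseteq\overline{H},
\]
so $[G,G]\subseteq\overline{H}$. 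Using the hypothesis that $[G,G]$ is dense in $G$, this gives $G=\overline{[G,G]}\subseteq\overline{H}$, as required. Note that this argument needs no countability or metrizability --- only the density-continuity inclusion $c(\overline{D})\subseteq\overline{c(D)}$ applied to $D=HZ\times HZ$.

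For the ``in particular'' clause, suppose $f:G\to H$ is an open epimorphism with central kernel $\ker f$. By the first part, $\ker f$ is a non-generating normal subgroup, so the quotient $G\to G/\ker f$ is absolutely Gasch\"utz. Since $f$ is an open continuous surjection, the induced continuous bijection $G/\ker f\to H$ is a topological-group isomorphism, whence $f$ itself is absolutely Gasch\"utz. No serious obstacle is expected: the whole argument rests on the one-line commutator computation above together with the continuity of $c$.
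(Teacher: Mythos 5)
Your proof is correct and follows essentially the same route as the paper: the central observation that $[hz,h'z']=[h,h']$ for central $z,z'$, combined with density of $HZ$ and of $[G,G]$, forces $\overline{H}=G$. The only cosmetic difference is that you inline the continuity argument (via $c(\overline{D})\subseteq\overline{c(D)}$) where the paper invokes its Lemma \ref{lem:commutator-still-dense}, which proves the same fact.
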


\begin{proof}
If $Z\leqslant G$ is central and $F\leqslant G$ is a (not-necessarily-closed)
subgroup such that $FZ$ is dense in $G$, then $[F,F]=[FZ,FZ]$ is
dense in $[G,G]$, which is dense in $G$; in particular, $F\supseteq[F,F]$
is dense in $G$. In other words, every central subgroup is non-generating. 
\end{proof}

We need a lemma from \cite{GM17}. It is stated for groups, but the
same proof works for Lie algebras as well.
\begin{defn}
Let $\prod_{i=1}^{n}S_{i}$ be a product of Lie algebras, and denote
by $p_{\ell}:\prod_{i=1}^{n}S_{i}\to S_{\ell}$ the projection. We
say a subset $R\subseteq\prod_{i=1}^{n}S_{i}$ is \emph{diagonally
embedded }if there are two distinct $i,j\in\left\{ 1,\dots,n\right\} $
and an isomorphism $\varphi:S_{i}\to S_{j}$ such that $p_{j}(x)=\varphi(p_{i}(x))$
for all $x\in R$.
\end{defn}

\begin{lem}[{\cite[Lemma 3.7]{GM17}}]
\label{lem:lem}Let $S=\prod_{i=1}^{n}S_{i}$ be a product of simple
Lie algebras and let $p_{i}:S\to S_{i}$ be the projection onto the
$i^{\text{th}}$ coordinate. Let $R$ be a Lie subalgebra of $S$
that is not diagonally embedded and that satisfies $p_{i}(R)=S_{i}$
for every $i$. Then $R=S$.
\end{lem}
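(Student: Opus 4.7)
I would proceed by induction on $n$, with the base case $n=1$ being immediate since $p_1(R)=S_1$ (the diagonal-embedding hypothesis is vacuous). For the inductive step, let $\pi\colon S\to S_1\times\cdots\times S_{n-1}$ denote the projection that forgets the last coordinate, and set $R'=\pi(R)$, which still satisfies $p_i(R')=S_i$ for $i<n$. If $R'$ were diagonally embedded in $\prod_{i<n}S_i$ via some isomorphism $\varphi\colon S_i\to S_j$, then because $p_i,p_j$ factor through $\pi$ we would have $p_j(x)=\varphi(p_i(x))$ for all $x\in R$, contradicting the hypothesis on $R$; hence by induction $R'=S_1\times\cdots\times S_{n-1}$.

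Next I would examine the kernel $K=\{s\in S_n:(0,\dots,0,s)\in R\}$. Using surjectivity $p_n(R)=S_n$, a short commutator computation shows $K$ is an ideal of $S_n$: given $s\in K$ and $t\in S_n$, pick $r\in R$ with $p_n(r)=t$; then $[r,(0,\dots,0,s)]$ lies in $R$ with all coordinates but the last equal to zero, and last coordinate $[t,s]$. By simplicity of $S_n$, either $K=S_n$, in which case $R$ contains $0\times\cdots\times 0\times S_n$ and (combined with $R'=\prod_{i<n}S_i$) we conclude $R=S$; or $K=0$.

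The remaining graph case is the main obstacle, and it must be ruled out by contradiction with the non-diagonal hypothesis. When $K=0$, $\pi|_R$ is an isomorphism onto $\prod_{i<n}S_i$, so $R$ is the graph of a Lie algebra homomorphism $\psi\colon S_1\times\cdots\times S_{n-1}\to S_n$, surjective because $p_n(R)=S_n$. I would then analyse the images $I_i:=\psi(S_i)\subseteq S_n$. Since the $S_i$ pairwise commute in $\prod_{i<n}S_i$, the $I_i$ pairwise commute in $S_n$; combined with $\sum I_i=S_n$ and centerlessness of the simple non-abelian $S_n$, at most one $I_{j_0}$ can be non-zero, and the restriction $\psi|_{S_{j_0}}\colon S_{j_0}\to S_n$ is then a surjection of simple Lie algebras, hence an isomorphism $\varphi$. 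Therefore
\[
R=\{(x_1,\dots,x_{n-1},\varphi(x_{j_0})):x_k\in S_k\},
\]
which is diagonally embedded via indices $j_0$ and $n$, contradicting the standing hypothesis. This forces the earlier case $K=S_n$, completing the induction and giving $R=S$.
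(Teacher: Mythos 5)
Your proof is correct. The paper itself does not prove this lemma; it simply quotes it from \cite{GM17} (where it is stated for topologically simple groups) and remarks that the same argument works for Lie algebras. Your induction--with--Goursat argument is essentially the proof one finds there: projecting away the last factor, applying the inductive hypothesis, showing $K=\{s\in S_n:(0,\dots,0,s)\in R\}$ is an ideal of $S_n$, and ruling out the $K=0$ case because then $R$ would be the graph of a surjection $\psi\colon\prod_{i<n}S_i\to S_n$ that must come from a single factor, exhibiting a forbidden diagonal embedding. The only step you state a bit tersely is ``at most one $I_{j_0}$ can be non-zero'': the cleanest justification is that each $I_i$ is an ideal of $S_n$ (since $[I_i,S_n]=[I_i,\sum_k I_k]\subseteq I_i$ by the pairwise commuting), hence is $0$ or $S_n$ by simplicity, and two copies of $S_n$ cannot commute because a simple Lie algebra is perfect and non-abelian. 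Your appeal to centrelessness reaches the same conclusion but benefits from spelling out that $S_n=I_{j_0}\oplus\sum_{k\neq j_0}I_k$ would otherwise decompose $S_n$ as a direct product. Also worth noting explicitly: since $\sum I_i=S_n\neq 0$, some $I_{j_0}$ is nonzero, and then $\psi|_{S_{j_0}}$ is a surjection between simple Lie algebras whose kernel is an ideal of $S_{j_0}$, hence trivial, giving the isomorphism $\varphi$. With these small elaborations the argument is complete.
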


We will use the following well known fact.
\begin{fact*}[{\cite[1.5, 1.6]{Hum95}}]
If $G$ is a connected simple Lie group and $g\in G$ is any element,
then the closure of the orbit of $g$ under the group of (topological)
automorphisms of $G$ is of measure zero.
\end{fact*}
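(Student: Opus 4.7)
My plan is to show that the orbit closure is contained in a proper closed analytic subset of $G$ cut out by conjugation-invariant functions coming from the adjoint representation, and conclude by the well-known fact that proper closed analytic subsets of a connected Lie group have Haar measure zero.

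The first step is to reduce from the $\mathrm{Aut}(G)$-action to the conjugation action of $G$ on itself. For a connected simple Lie group, the outer automorphism group $\mathrm{Out}(G)=\mathrm{Aut}(G)/\mathrm{Inn}(G)$ is finite: this is a classical consequence of the finiteness of the symmetries of the Dynkin diagram (together with the fact that $\mathrm{Inn}(G)\cong G/Z(G)$ is of finite index in $\mathrm{Aut}(G)$ because, as Lie groups, they have the same Lie algebra $\mathrm{Der}(\mathfrak{g})=\mathfrak{g}$). Hence the $\mathrm{Aut}(G)$-orbit of $g$ is a finite union of conjugacy classes, and its closure is the union of the corresponding closures of conjugacy classes; it suffices to treat a single conjugacy class.

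The second step is to construct a continuous invariant whose level sets contain orbit closures. Consider the adjoint representation $\mathrm{Ad}\colon G\to\mathrm{GL}(\mathfrak{g})$ and the characteristic polynomial map
\[
\chi\colon G\to\mathbb{R}[t],\qquad \chi_h(t)=\det(tI-\mathrm{Ad}(h)).
\]
Because any automorphism $\varphi$ of $G$ induces a Lie algebra automorphism $\mathrm{d}\varphi\in\mathrm{GL}(\mathfrak{g})$ satisfying $\mathrm{Ad}(\varphi(h))=\mathrm{d}\varphi\circ\mathrm{Ad}(h)\circ\mathrm{d}\varphi^{-1}$, the polynomial $\chi_h$ is constant on the full $\mathrm{Aut}(G)$-orbit of $g$. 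Continuity of the coefficients of $\chi$ then shows that $\chi_h=\chi_g$ for every $h$ in the closure of the orbit. Thus the closure is contained in the closed real-analytic set
\[
V_g:=\{h\in G:\chi_h=\chi_g\}.
\]

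The third and main step is to verify that $V_g$ is a proper subset of $G$. Equivalently, I would show that the analytic map sending $h$ to the coefficient vector of $\chi_h$ has rank at least $1$ somewhere on $G$. This is exactly the content of the classical invariant theory invoked in Humphreys 1.5--1.6: the regular semisimple locus of $G$ is open and dense, and on it the dimension of the centralizer equals $\mathrm{rank}(G)\geqslant 1$, so the conjugacy class has codimension at least $\mathrm{rank}(G)$ in $G$; in particular the Chevalley-type invariants extracted from $\chi$ are not all locally constant, so $V_g\subsetneq G$. Since $V_g$ is a proper closed real-analytic subset of the connected analytic manifold $G$, it has Haar measure zero, and the conclusion follows.

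The main obstacle is the last step, the properness of $V_g$: a priori the characteristic polynomial of $\mathrm{Ad}$ could be constant on $G$ (as it is, say, on a unipotent group). That this does not happen for simple $G$ is precisely where the semisimple structure theory enters, via the existence of regular semisimple elements; once that is in hand the proof is immediate.
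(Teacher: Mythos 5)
The paper offers no proof here; it simply cites Humphreys' monograph on conjugacy classes in semisimple algebraic groups. Your argument is a correct, self-contained alternative, and it has the advantage of working intrinsically in the real-analytic category rather than through algebraic geometry, which matters because a connected simple Lie group $G$ need not be linear (consider the universal cover of $\mathrm{SL}_2(\mathbb{R})$). Each of your three steps is sound: the differential embeds $\mathrm{Aut}(G)$ into $\mathrm{Aut}(\mathfrak{g})$ with $\mathrm{Inn}(G)$ mapping onto the identity component, and for $\mathfrak{g}$ simple the component group of $\mathrm{Aut}(\mathfrak{g})$ is finite; the coefficients of $\chi_h=\det(tI-\mathrm{Ad}(h))$ are continuous $\mathrm{Aut}(G)$-invariants; and a proper closed real-analytic subset of a connected manifold is null. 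The one place you might tighten the write-up is the properness of $V_g$, i.e.\ the nonconstancy of $\chi$: rather than appealing to regular semisimple elements (which over $\mathbb{R}$ needs some care, since there can be several non-conjugate Cartan subgroups and the semisimple locus is a more delicate object than in the algebraic setting), observe directly that
\[
\left.\frac{d^{2}}{dt^{2}}\right|_{t=0}\mathrm{tr}\bigl(\mathrm{Ad}(\exp tX)\bigr)=\mathrm{tr}\bigl(\mathrm{ad}(X)^{2}\bigr)=B(X,X),
\]
the Killing form, which is nondegenerate on a simple $\mathfrak{g}$ and hence nonzero for some $X$; so already the trace coefficient of $\chi$ is nonconstant along a one-parameter subgroup. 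With that small adjustment your proof is complete and arguably more elementary than the cited structure-theoretic route.
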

\begin{cor}
\label{cor:aut-orbit}Let $G=G_{1}\times\cdots\times G_{n}$ be a
connected semisimple Lie group, $H$ a connected simple Lie group,
$h\in H$ an element. Then
\[
\overline{\left\{ (g_{i})_{i}\in G\middle|g_{i}=\varphi(h)\text{ for some \ensuremath{i} and some isomorphism }\varphi:H\to G_{i}\right\} }
\]
is of measure zero.
\end{cor}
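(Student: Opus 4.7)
The plan is to decompose the set under consideration as the finite union, over $i=1,\dots,n$, of
\[
A_i = \{(g_j)_{j=1}^n \in G : g_i = \varphi(h) \text{ for some isomorphism } \varphi: H \to G_i\},
\]
and show that each $\overline{A_i}$ is a Haar-null set in $G$. Since the union is finite, one has $\overline{\bigcup_i A_i} = \bigcup_i \overline{A_i}$, and a finite union of null sets is null, which gives the claim.

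For a fixed $i$, I would first discard the trivial case in which no (topological) isomorphism $H \to G_i$ exists, so that $A_i$ is empty. Otherwise, fix any one such isomorphism $\varphi_0: H \to G_i$; any other isomorphism $\varphi: H \to G_i$ is then of the form $\beta \circ \varphi_0$ for some $\beta \in \mathrm{Aut}(G_i)$. Consequently, the set of possible values $\varphi(h)$, as $\varphi$ ranges over all isomorphisms, coincides exactly with the $\mathrm{Aut}(G_i)$-orbit of the single element $\varphi_0(h) \in G_i$. The cited Fact from \cite{Hum95} then applies directly to the simple Lie group $G_i$ and implies that the closure $O_i$ of this orbit inside $G_i$ has Haar measure zero.

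It remains to push this null-ness up to $G$. The projection of $A_i$ onto the $i$-th factor lies in $O_i$, while the other coordinates are unconstrained, so $\overline{A_i} = O_i \times \prod_{j \neq i} G_j$. Since every connected Lie group is $\sigma$-compact, each $G_j$ is $\sigma$-finite with respect to Haar measure, and Haar measure on $G$ agrees (up to normalisation) with the product of Haar measures on the $G_j$; Fubini then immediately yields that $\overline{A_i}$ is null in $G$. I do not foresee any real obstacle: the entire argument is a matter of identifying the set of ``isomorphic images of $h$'' with a single $\mathrm{Aut}(G_i)$-orbit in each factor, and then invoking the Fact together with Fubini.
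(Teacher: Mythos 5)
Your proof is correct, and it supplies the (straightforward) argument that the paper omits, since the Corollary is stated without proof immediately after the Fact. The key steps — decomposing the set as a finite union of cylinder sets $A_i$, identifying $\{\varphi(h):\varphi \text{ an isomorphism}\}\subseteq G_i$ with the single $\mathrm{Aut}(G_i)$-orbit of $\varphi_0(h)$ once an isomorphism $\varphi_0$ is fixed (or noting $A_i=\varnothing$ otherwise), invoking the Fact to get that its closure $O_i$ is Haar-null in $G_i$, and then using $\overline{A_i}=O_i\times\prod_{j\neq i}G_j$ together with Fubini and $\sigma$-compactness to conclude each $\overline{A_i}$ is null — are all valid, and the passage from the finite union to the closure is handled correctly by $\overline{\bigcup_i A_i}=\bigcup_i\overline{A_i}$.
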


\subsection{Lifting Generators}

In this section we show that connected semisimple Lie groups are Gasch\"utz.

For the sake of the proof, we call an identity neighbourhood $V$
in a Lie group $G$ a \emph{strongly Zassenhaus neighbourhood} if
$\log$ is a well defined diffeomorphism on it and if, whenever $(g_{1},g_{2})\in V\times V$
are such that $\log g_{1},\log g_{2}$ generate $\mathfrak{g}$, it
follows $g_{1},g_{2}$ topologically generate $G$. Every connected
semisimple Lie group admits a strongly Zassenhaus neighbourhood by
\cite[Theorem 2.1]{BG03}. Moreover, by \cite[Theorem 2.4]{BG03}
the subset of pairs $(g_{1},g_{2})\in V^{2}$ which do not topologically
generate $G$ is contained in a closed null subset.
\begin{lem}
\label{lem:semi-simple-main-lemma}Let $G_{1},\dots,G_{m},H$ be connected
centre-free (nontrivial) simple Lie groups, and let $h_{1},\dots,h_{n}\in H$
be topological generators of $H$ (in particular, $n\geq2$). Then
there are $g_{1},g_{2}\in G\coloneqq\prod_{i=1}^{m}G_{i}$ such that
\[
(g_{1},h_{1}),(g_{2},h_{2}),(1_{G},h_{3}),\dots,(1_{G},h_{n})\in G\times H
\]
topologically generate $G\times H$.
\end{lem}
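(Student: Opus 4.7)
Plan. The strategy is to leverage the strongly Zassenhaus property for the connected semisimple Lie group $G \times H$: we produce two elements $\gamma_1,\gamma_2$ of
\[
\Gamma := \langle (g_1,h_1),(g_2,h_2),(1_G,h_3),\ldots,(1_G,h_n)\rangle
\]
lying in a strongly Zassenhaus neighborhood of $G\times H$, whose logarithms generate $\mathfrak{g}\oplus\mathfrak{h}$ as a Lie algebra. Such a pair topologically generates $G\times H$, yielding $\overline{\Gamma}=G\times H$. Accordingly we fix strongly Zassenhaus neighborhoods $V_G\subseteq G$ and $V_H\subseteq H$ with $V_G\times V_H$ contained in a strongly Zassenhaus neighborhood of $G\times H$.

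We take $\gamma_i$ of the form
\[
\gamma_i = w_i\bigl((g_1,h_1),(g_2,h_2),(1,h_3),\ldots,(1,h_n)\bigr) = \bigl(\tilde{w}_i(g_1,g_2),\,w_i(h_1,\ldots,h_n)\bigr)
\]
for suitable $w_i\in F_n$, where $\tilde{w}_i\in F_2$ arises from $w_i$ by setting $x_j=1$ for $j\geq 3$. The key is to choose $w_1,w_2$ so that $w_i(h)\in V_H$ with $\log w_1(h),\log w_2(h)$ generating $\mathfrak{h}$, while the projections $\tilde{w}_1,\tilde{w}_2\in F_2$ have linearly independent abelianizations in $\mathbb{Z}^2$. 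If the normal closure of $\{h_3,\ldots,h_n\}$ inside $\langle h_1,\ldots,h_n\rangle$ is dense in $H$ (the \emph{generic} case), we take $w_i=x_i\cdot r_i$ with $r_i\in\ker(F_n\to F_2)$ chosen so that $r_i(h)\approx h_i^{-1}$, giving $\tilde{w}_i=x_i$. Otherwise $h_3=\cdots=h_n=1$ and $h_1,h_2$ alone topologically generate $H$; in this degenerate case we use the density of the commutator subgroup of $\langle h_1,h_2\rangle$ in $H$ (a consequence of \lemref{commutator-still-dense}, since $H$ is simple hence perfect) to find $w_1,w_2\in F_2$ of prescribed abelianizations $(1,0)$ and $(0,1)$ with $w_i(h_1,h_2)\in V_H$ and joint logs generating $\mathfrak{h}$.

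We then choose $g_1,g_2\in V_G$ sufficiently close to $1_G$. Then $\tilde{w}_i(g_1,g_2)\in V_G$ and $\log\tilde{w}_i(g_1,g_2)=a_i\log g_1+b_i\log g_2+O(\|g\|^2)$, where $(a_i,b_i)$ is the abelianization of $\tilde{w}_i$. Linear independence of the pairs $(a_i,b_i)$, together with the openness of the Lie-generating condition, implies that whenever $\log g_1,\log g_2$ generate $\mathfrak{g}$ --- an open dense condition on $V_G^2$ by the strongly Zassenhaus property --- the vectors $\log\tilde{w}_1(g_1,g_2),\log\tilde{w}_2(g_1,g_2)$ generate $\mathfrak{g}$ as well.

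Finally, by \lemref{lem}, the Lie subalgebra $\mathfrak{l}$ generated by $\log\gamma_1,\log\gamma_2$ inside $\mathfrak{g}\oplus\mathfrak{h}$ equals the whole space, provided (i) $\mathfrak{l}$ surjects onto each simple factor and (ii) $\mathfrak{l}$ is not diagonally embedded. Condition (i) holds by the above. For (ii), a diagonal embedding along a pair of isomorphic simple factors forces algebraic identities of the form $\phi(\log g_{\ell,i})=\log g_{\ell,j}$ or $\phi(\log g_{\ell,i})=\log w_\ell(h)$ (for $\ell=1,2$ and some Lie isomorphism $\phi$); by \corref{aut-orbit}, the set of $(g_1,g_2)\in V_G^2$ satisfying any such relation has measure zero. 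A generic choice of $(g_1,g_2)$ simultaneously satisfies all requirements. The main obstacle is the word construction: the $w_i$'s must reconcile two seemingly incompatible demands --- that $w_i(h)$ be close to $1_H$ with generic logarithm, and that $\tilde{w}_i$ carry a prescribed non-trivial abelianization in $F_2$ --- and this is the role played by the density of commutators in a perfect topological group.
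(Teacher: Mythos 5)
Your proposal follows the same essential strategy as the paper: push the tuple inside a strongly Zassenhaus neighbourhood of $G\times H$ via two word maps $w_1,w_2$, arrange that the logarithms of the resulting pair generate $\mathfrak g\oplus\mathfrak h$ by ruling out diagonal embeddings (\lemref{lem}), and use \corref{aut-orbit} for the measure-zero argument. Your word construction, however, splits into two cases (whether or not the normal closure of $\{h_3,\dots,h_n\}$ in $\langle h_1,\dots,h_n\rangle$ is dense), where the paper has a single mechanism: start with any $w_1,w_2$ landing in the target neighbourhood and then repeatedly right-multiply by $x_j\tilde w$ with $\tilde w\in[F_n,F_n]$ to adjust the exponent sums without leaving the neighbourhood (this uses only that $[F_n,F_n]$ has dense image, i.e.\ \lemref{commutator-still-dense} applied to the perfect group $H$). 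Your generic case (replace $x_i$ by $x_ir_i$ with $r_i\in\ker(F_n\to F_2)$) is a fine shortcut but adds complexity; the degenerate case you keep is exactly the paper's mechanism restricted to $F_2$. Your choice of $g_1,g_2$ is also a variant: the paper builds $\Psi(g_1,g_2)=(w_1(g_1,g_2,1,\dots),w_2(g_1,g_2,1,\dots))$, observes that the two exponent-sum constraints make $d\Psi=\mathrm{id}$, and picks $(g_1,g_2)$ in $\Psi^{-1}$ of a carefully punctured product set; you instead take $g_1,g_2$ generic and small and argue via BCH that $\log\tilde w_i(g_1,g_2)=a_i\log g_1+b_i\log g_2+O(\|g\|^2)$ generates $\mathfrak g$ for small $g$. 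This works, though your phrasing ``whenever $\log g_1,\log g_2$ generate'' is not literally a uniform implication (the required smallness depends on the direction of the pair); the correct assertion, which is all you need, is that the set of small $(g_1,g_2)$ for which the $\log\tilde w_i(g_1,g_2)$ generate is non-empty and open (hence co-null), which the same limit computation gives. Finally, one small logical misstep: you invoke \corref{aut-orbit} also to exclude diagonal embeddings \emph{between two $G_i$ factors}, but that corollary concerns the orbit of a \emph{fixed} element and does not apply there; however this is harmless, since any such diagonal is already impossible once $\log\tilde w_1(g_1,g_2),\log\tilde w_2(g_1,g_2)$ generate $\mathfrak g=\bigoplus_i\mathfrak g_i$, which you have established separately — this is exactly how the paper handles it, only invoking \corref{aut-orbit} for the $G_\ell$--$H$ diagonals (the set $\mathcal B$).
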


\begin{proof}
Let $\Omega\subseteq H,W\subseteq G$ be strongly Zassenhaus neighbourhoods,
and assume without loss of generality that $W\times\Omega$ is a strongly
Zassenhaus neighbourhood in $G\times H$ as well.

Since the subset of pairs $(y_{1},y_{2})\in\Omega^{2}$ which do not
generate $H$ is contained in a closed null subset, there are open
subsets $V_{1},V_{2}\subseteq H$ such that every pair in $V_{1}\times V_{2}$
generates $H$. Since $h_{1},\dots,h_{n}$ generate $H$, there are
words $w_{1},w_{2}$ in $n$ letters such that $w_{i}(\underline{h})\in V_{i}$
for $i=1,2$. 

In fact, we can control the sum of powers of $h_{j}$ in $w_{i}(h_{1},\dots,h_{n})$
for every $j=1,\dots,n$ and $i=1,2$. To see this, think of words
as elements in the free groups $F_{n}$ with basis $x_{1},\dots,x_{n}$.
For $i=1,2$ and $j=1,\dots,n$, we can multiply $w_{i}$ from the
right by $x_{j}\tilde{w}$, where $\tilde{w}\in[F_{n},F_{n}]$ is
a word such that $\tilde{w}(\underline{h})\in x_{j}^{-1}w_{i}(\underline{h})^{-1}V_{i}$.
This is possible since the commutator subgroup of $H$ is $H$, and
therefore if $\varphi:F_{n}\to H$ is a homomorphism with a dense
image, it follows $\varphi([F_{n},F_{n}])$ is dense as well (\lemref{commutator-still-dense}).
The new word (i.e.~$w_{i}x_{j}\tilde{w}$) still satisfies the requirement
above (i.e.~$w_{i}x_{j}\tilde{w}(\underline{h})\in V_{i}$). By repeating
this process, we can modify the sum of powers as we please. For reasons
that will be clear momentarily, let us assume the following: 
\begin{enumerate}
\item In $w_{1}$, the sum of powers of $x_{1}$ is $1$ and of $x_{2}$
is zero.
\item In $w_{2}$, the sum of powers of $x_{2}$ is $1$ and of $x_{1}$
is zero.
\end{enumerate}
Now, consider the map 
\begin{align*}
\Psi:G\times G & \to G\times G\\
(g_{1},g_{2}) & \mapsto(w_{1}(g_{1},g_{2},1_{G},\dots,1_{G}),w_{2}(g_{1},g_{2},1_{G},\dots,1_{G})).
\end{align*}
The differential of this map is 
\begin{align*}
\mathfrak{g}\oplus\mathfrak{g} & \to\mathfrak{g}\oplus\mathfrak{g}\\
(X_{1},X_{2}) & \mapsto\left(n_{1}^{1}X_{1}+n_{2}^{1}X_{2},n_{1}^{2}X_{1}+n_{2}^{2}X_{2}\right),
\end{align*}
where $n_{j}^{i}$ is the sum of powers of $x_{j}$ in $w_{i}$. By
our assumption, this implies that
\[
d\Psi(X_{1},X_{2})=(X_{1},X_{2}).
\]
In particular, the differential of $\Psi$ (at $(1_{G},1_{G})$) is
of full rank, and hence the image of $\Psi$ contains an open neighbourhood
of $(1_{G},1_{G})$. Let $U$ be the intersection of this identity
neighbourhood with $W$, and let $U_{1},U_{2}\subseteq G$ be open
subsets such that $U_{1}\times U_{2}\subseteq U$. As we remarked
above, the subset $\mathcal{N}$ of $U_{1}\times U_{2}$ of pairs
whose logarithms do not generate the Lie algebra of $G$ is contained
a closed null subset. 

Now, denote by $p_{\ell}:G\to G_{\ell}$ the projection onto the $\ell^{\text{th}}$
coordinate, for $\ell=1,\dots,m$. By \corref{aut-orbit}, the closure
of the subset
\[
\mathcal{B}\coloneqq\left\{ (y_{1},y_{2})\in G^{2}|\begin{array}{l}
\text{for some }\ell\text{ there is an isomorphism }\varphi:H\to G_{\ell}\\
\text{ such that }p_{\ell}(y_{i})=\varphi(w_{i}(\underline{h}))\text{ for }i=1,2
\end{array}\right\} 
\]
is null. Therefore we can find open subsets $\hat{U}_{i}\subseteq U_{i}$
($i=1,2$) such that 
\[
\hat{U}_{1}\times\hat{U}_{2}\subseteq(U_{1}\times U_{2})\backslash\left(\mathcal{N}\cup\mathcal{B}\right).
\]

Let $g_{1},g_{2}\in G$ be such that $w_{i}(g_{1},g_{2},1_{G},\dots,1_{G})\in\hat{U}_{i}$
for $i=1,2$. Such an $n$-tuple actually exists since, by construction,
the subset $\hat{U}_{1}\times\hat{U}_{2}$ lies in the image of $\Psi$.
We claim $(g_{1},h_{1}),(g_{2},h_{2}),(1_{G},h_{3}),\dots,(1_{G},h_{n})\in G\times H$
generate $G\times H$. Set $\underline{g}=(g_{1},g_{2},1_{G},\dots,1_{G})$.
Clearly, it is enough to show the pair 
\begin{align*}
\gamma_{1}\coloneqq(w_{1}(\underline{g}),w_{1}(\underline{h})) & =w_{1}((g_{1},h_{1}),(g_{2},h_{2}),(1_{G},h_{3}),\dots,(1_{G},h_{n})),\\
\gamma_{2}\coloneqq(w_{2}(\underline{g}),w_{2}(\underline{h})) & =w_{2}((g_{1},h_{1}),(g_{2},h_{2}),(1_{G},h_{3}),\dots,(1_{G},h_{n}))
\end{align*}
 generates $G\times H$. The Lie algebra of $G\times H$ is $\mathfrak{g}\oplus\mathfrak{h}=\mathfrak{g}_{1}\oplus\cdots\oplus\mathfrak{g}_{n}\oplus\mathfrak{h}$;
denote by $R$ the Lie subalgebra generated by $\log\gamma_{1},\log\gamma_{2}$.
By our construction, $R$ projects onto $\mathfrak{h}$ and $\mathfrak{g}$,
and therefore also onto each $\mathfrak{g}_{\ell}$ (for $\ell=1,\dots,m$).
The subset $\left\{ \log\gamma_{1},\log\gamma_{2}\right\} $ is not
diagonally embedded: if it were, the isomorphism would be the differential
of an isomorphism between the Lie groups (since the groups are connected,
centre-free and semisimple), contrary to the construction of $\gamma_{1},\gamma_{2}$.
Therefore, $R$ is not diagonally embedded, and hence $R=\mathfrak{g}\oplus\mathfrak{h}$
by \lemref{lem}. Since $\gamma_{1},\gamma_{2}\in W\times\Omega$
and $W\times\Omega$ is a strongly Zassenhaus neighbourhood, this
means they generate $G\times H$. 
\end{proof}
\begin{cor}
\label{cor:SsCase}Connected semisimple Lie groups are Gasch\"utz. In
other words, if $G$ is a connected semisimple Lie group, $f:G\to H$
is an open epimorphism and $h_{1},\dots,h_{n}\in H$ are generators
with $n\geqslant\tr G$, then there are generating lifts $g_{1},\dots,g_{n}\in G$.
\end{cor}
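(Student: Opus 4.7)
The plan is to reduce to the centre-free case, exploit the product decomposition of centre-free semisimple groups to rewrite the map as a canonical projection, and then induct on the number of kernel factors by invoking a ``mirror'' version of \lemref{semi-simple-main-lemma}. First, since $G$ is connected semisimple and hence perfect, \lemref{CentralQuotient} says the centre $Z(G)$ is a non-generating normal subgroup. Applying \lemref{AbsGas} with $N = Z(G)$ replaces the lifting problem for $f$ by the equivalent lifting problem for the induced map $\tilde f \colon G/Z(G) \to H/f(Z(G))$, and since $\tr{G/Z(G)} \leq \tr G \leq n$, the generator count remains adequate. So I will assume from now on that $G$ is centre-free semisimple.

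In that case $G = G_1 \times \cdots \times G_k$ with each $G_i$ centre-free simple, and every closed normal subgroup of $G$ has the form $\prod_{i \in I} G_i$. Setting $K = \ker f = \prod_{i \in I} G_i$, I can identify $G \cong K \times H$ (with $H \cong \prod_{i \notin I} G_i$) so that $f$ becomes the canonical projection. The reformulated task is: given generators $h_1, \dots, h_n \in H$ with $n \geq \tr G = 2$, find $k_1, \dots, k_n \in K$ such that $(k_i, h_i) \in K \times H$ topologically generate $K \times H$. I propose to induct on $|I|$. The base case $|I| = 0$ is trivial. For the inductive step I will write $K = K' \times G_{i_0}$ for some simple kernel factor $G_{i_0}$, apply the inductive hypothesis to the projection $K' \times H \to H$ (whose kernel has size $|I| - 1$) to obtain generating lifts $\bar g_i \in K' \times H$ of the $h_i$, and then seek $z_i \in G_{i_0}$ so that $(\bar g_i, z_i) \in (K' \times H) \times G_{i_0} = G$ still generate $G$. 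Such pairs are automatically lifts of $h_i$ through $f$, since $G_{i_0} \subseteq \ker f$.

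The hard part is this final adjustment step: it requires a ``mirror'' variant of \lemref{semi-simple-main-lemma} in which the prescribed generators $\bar g_i$ live on the product-of-simples side $L \coloneqq K' \times H$, while the adjustment elements $z_1, z_2$ are chosen on the single-simple side $G_{i_0}$ (with $z_i = 1_{G_{i_0}}$ for $i \geq 3$). I intend to prove this mirror form by an essentially symmetric adaptation of the proof of \lemref{semi-simple-main-lemma}: a strongly Zassenhaus neighbourhood in $G_{i_0}$ paired with Breuillard--Gelander density of generating pairs in $L$ (which holds for any connected semisimple group, not only a simple one), the same sum-of-powers control via perfectness, and the same word-map-differential computation yielding a local diffeomorphism whose image meets any prescribed open set. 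The diagonal-avoidance step becomes a finite union of null sets, one for each simple factor of $L$ admitting an isomorphism to $G_{i_0}$, each null by \corref{aut-orbit}. Executing this symmetric adaptation carefully is the main technical obstacle, but no genuinely new ideas beyond those already developed for \lemref{semi-simple-main-lemma} are needed.
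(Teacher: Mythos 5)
Your proposal is correct, and in spirit it matches the paper's own proof, but it is more scrupulous about a point the paper passes over. The reduction to the centre-free case is exactly the one used in the paper (the paper does the \lemref{AbsGas}-style bookkeeping by hand rather than citing the lemma, but the content is identical; note that $f(Z(G))\subseteq Z(H)$ is discrete, hence closed, so the quotient stays Hausdorff). For the centre-free case, the paper simply asserts that the result ``follows immediately from the lemma'' because ``every epimorphism from such a group is a composition of projections as in the lemma''. Taken literally, however, \lemref{semi-simple-main-lemma} requires the target $H$ to be a \emph{single} centre-free simple group, while the intermediate (and final) targets in such a composition are generally semisimple but not simple. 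You correctly identify that one therefore needs a variant of the lemma. Your ``mirror'' version — prescribed generating tuple in a semisimple $L$, single simple factor $G_{i_0}$ to be adjoined, adjustments only in the $G_{i_0}$-coordinates of the first two entries — is exactly the variant that makes the one-factor-at-a-time induction run, and the inductive lifts $(\bar g_i, z_i)$ are indeed automatically lifts of the $h_i$ because $G_{i_0}\subseteq\ker f$.

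Your sketch of why the mirror lemma holds is sound: the Breuillard--Gelander density of generating pairs in a strongly Zassenhaus neighbourhood applies to semisimple $L$, not just simple; the sum-of-powers control uses only perfectness of $L$ (via \lemref{commutator-still-dense}); the word map $\Psi$ and its identity differential live entirely on the $G_{i_0}^2$ side, so full rank is unaffected; and the diagonal-avoidance set becomes a finite union over those simple factors $L_\ell$ of $L$ isomorphic to $G_{i_0}$, each contributing a subset of $\overline{\{\varphi(a_1^\ell)\}_\varphi}\times G_{i_0}$, which is null in $G_{i_0}^2$ by the Humphreys fact (or \corref{aut-orbit}). An equally valid alternative, closer to how the paper's sentence probably intends to be read, is to prove \lemref{semi-simple-main-lemma} itself with $H$ allowed to be semisimple — modifying the set $\mathcal B$ to range over isomorphisms from each simple factor of $H$ to each $G_\ell$ — and then apply it once with the full kernel as $G$ and the full quotient as $H$; this avoids the induction but requires exactly the same amendments. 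Either way, your proposal fills in the gap that the paper leaves implicit, so it is a legitimate (and arguably more complete) account of the proof. The only caveat is that you have not actually executed the mirror lemma's proof, only argued that the adaptation is routine; that claim is correct, but a fully rigorous write-up would need to restate the lemma and rerun the argument.
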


\begin{rem}
Recall that $\tr G=2$ if $G$ is nontrivial, and that $H$ is necessarily
a connected semisimple Lie group.
\end{rem}

\begin{proof}
For centre-free groups this follows immediately from the lemma, since
connected centre-free semisimple Lie groups are direct products of
connected centre-free simple Lie groups, and their only normal subgroups
are subproducts, and hence every epimorphism from such a group is
a composition of projections as in the lemma. 

Now, let $G$ be a general semisimple Lie group, let $f:G\to H$ be
an epimorphism and $\underline{h}\in H^{n}$ a generating system with
$n\geqslant\tr G$. Denote $K=\ker f$, $Z=Z(G)$, $\hat{G}=G/Z(G)$,
$\hat{H}=H/f(Z)$. Observe $f(Z)\subseteq Z(H)$ is discrete and hence
closed. We have a natural epimorphism $\hat{f}:\hat{G}\to\hat{H}$
and a commutative diagram
\[
\xymatrix{G\ar[r]^{f}\ar[d] & H\ar[d]\\
\hat{G}\ar[r]_{\hat{f}} & \hat{H}
}
\]
Clearly, $h_{1}f(Z),\dots,h_{n}f(Z)$ generate $\hat{H}$, so there
are $g_{1}Z,\dots,g_{n}Z$ which generate $G/Z$ and such that $\hat{f}(g_{i}Z)=h_{i}f(Z)$.
By \lemref{CentralQuotient}, $g_{1},\dots,g_{n}$ generate $G$.
It is still possible that they are not lifts of $h_{1},\dots,h_{n}$.
However, the fact $\hat{f}(g_{i}Z)=h_{i}f(Z)$ means that $f(g_{i})^{-1}h_{i}\in f(Z)$
for $i=1,\dots,n$. Thus, there are $\varepsilon_{1},\dots,\varepsilon_{n}\in Z$
such that $f(\varepsilon_{i})=f(g_{i})^{-1}h_{i}$, so $f(g_{i}\varepsilon_{i})=h_{i}$
for $i=1,\dots,n$. Moreover, $g_{1}\varepsilon_{1},\dots,g_{n}\varepsilon_{n}$
generate $G$ (again by \lemref{CentralQuotient}, because they are
still lifts of $g_{1}Z,\dots,g_{n}Z$), so we are done.
\end{proof}

\section{Reductive Groups\label{sec:Reductive}}

In this section we consider products of semisimple groups with abelian
groups. Generation in such groups is simple:
\begin{lem}[{\cite[Lemma 6.6]{AN24}}]
\label{lem:GenReductive}Let $S$ be a connected semisimple Lie group
and $A$ a connected abelian Lie group, and denote by $p_{1}:S\times A\to S$,
$p_{2}:S\times A\to A$ the projections. Then $(s_{1},a_{1}),\dots,(s_{n},a_{n})\in S\times A$
generate $S\times A$ if and only if $s_{1},\dots,s_{n}\in S$ generate
$S$ and $a_{1},\dots,a_{n}\in A$ generate $A$. In particular, $\tr{S\times A}=\max\{\tr S,\tr A\}$.
\end{lem}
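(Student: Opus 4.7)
The forward direction is immediate: the projections $p_1, p_2$ are continuous surjective homomorphisms, so they send topological generating sets to topological generating sets. The content of the lemma is the converse, which I would prove by analysing the closed subgroup $H \coloneqq \overline{\langle (s_i, a_i) : i=1,\dots,n\rangle} \leqslant S\times A$ and showing first that $H \supseteq S \times \{0\}$, and then that $H \supseteq \{0\} \times A$.

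To obtain $S \times \{0\} \subseteq H$, set $K \coloneqq H \cap (S \times \{0\})$. Since $S\times\{0\}$ is normal in $S\times A$, $K$ is a closed normal subgroup of $H$. Crucially, because $A$ is abelian, every commutator in $H$ lies inside $S \times \{0\}$; in particular, $([s_i, s_j], 0) \in K$ for all $i,j$. Conjugating these elements by the generators $(s_k, a_k)$ of $H$ keeps us inside $K$ (by normality) and acts on the first coordinate by conjugation by $s_k$. Iterating, $K$ contains the normal closure of $\{[s_i,s_j]\}$ inside $\langle s_1,\dots,s_n\rangle$, which by standard commutator calculus equals the commutator subgroup $[\langle s_i\rangle,\langle s_i\rangle]$. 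Now $S$ is connected semisimple, so its Lie algebra is perfect and $[S,S] = S$; combined with the fact that $\langle s_i\rangle$ is dense in $S$, \lemref{commutator-still-dense} gives that $[\langle s_i\rangle,\langle s_i\rangle]$ is dense in $S$. Hence $K$, being closed and containing a dense subset of $S\times\{0\}$, equals $S\times\{0\}$.

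For the second containment, using $S \times \{0\} \subseteq H$ and $(s_i, a_i) \in H$, the product $(s_i^{-1}, 0)(s_i, a_i) = (1_S, a_i)$ lies in $H$ for each $i$. Therefore $\{1_S\} \times \langle a_1, \dots, a_n\rangle \subseteq H$; taking closure and using that $a_1,\dots,a_n$ topologically generate $A$ together with the fact that $H$ is closed, we conclude $\{1_S\} \times A \subseteq H$. Combined with $S \times \{0\} \subseteq H$, this gives $H = S \times A$.

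The moreover statement follows formally: the inequality $\tr{S\times A} \geqslant \max\{\tr S,\tr A\}$ holds by projecting generators, while the reverse inequality follows by taking minimal generating tuples $(s_1,\dots,s_{\tr S})$ and $(a_1,\dots,a_{\tr A})$, padding the shorter one with identity elements, and applying the ``if'' direction just proved. The only subtle point in the whole argument is step two, where one has to translate the normality of $K$ inside $H$ into the commutator identity $[\langle s_i\rangle,\langle s_i\rangle]\times\{0\}\subseteq K$; this is the place where the semisimplicity of $S$ and the abelianness of $A$ both enter in an essential way.
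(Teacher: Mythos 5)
Your proof is correct. Note that the paper itself does not prove this lemma: it is quoted verbatim from \cite[Lemma 6.6]{AN24}, so there is no in-paper argument to compare with. Your argument — reduce to showing the closed subgroup $H = \overline{\langle (s_i,a_i)\rangle}$ contains $S\times\{0\}$ by observing that all commutators land there (since $A$ is abelian), pass to the normal closure of the generator commutators to get $[F,F]\times\{0\}\subseteq H$ where $F=\langle s_i\rangle$, invoke \lemref{commutator-still-dense} with $[S,S]=S$ to conclude $S\times\{0\}\subseteq H$, and then strip off the $S$-coordinates to get $\{1\}\times A\subseteq H$ — is complete and makes appropriate use of the tools the paper already has in place. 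The only minor cosmetic point is that you describe conjugation "by the generators of $H$," whereas $(s_k,a_k)$ generate only the dense subgroup $\langle(s_i,a_i)\rangle$, not $H$ itself; but this does not affect the argument, since the normal closure of $\{[s_i,s_j]\}$ in $F$ already equals $[F,F]$, and you then take closures. The "in particular" statement is correctly deduced by padding the shorter generating tuple with identities.
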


\begin{prop}
\label{prop:Gas-Red}Let $S$ be a connected semisimple Lie group
and $A$ a connected abelian Lie group. Let $f:S\times A\to H$ be
an open epimorphism, and let $f^{\mathrm{ab}}:A\to H/f(S)$ be the
induced epimorphism. If $h_{1},\dots,h_{n}\in H$ are generators with
$n\geqslant\tr{S\times A}$, then they admit generating lifts to $G$
via $f$ if and only if $h_{1}f(S),\dots,h_{n}f(S)$ admit generating
lifts to $A$ via $f^{\mathrm{ab}}$.

If $S$ has finite centre and $H$ is Hausdorff, then $H/f(S)$ is
Hausdorff as well.
\end{prop}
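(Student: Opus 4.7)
My plan is to prove the forward direction directly via \lemref{GenReductive}, and to reduce the backward direction to a special case. Write $K = \ker f$, $K_S = K \cap (S \times \{e_A\})$, and $K_A = K \cap (\{e_S\} \times A)$. The forward direction is immediate: if $(s_i, a_i)$ are generating lifts of $h_i$, then the $a_i$ generate $A$ by \lemref{GenReductive}, and from $h_i = f(s_i, e_A) \cdot f(e_S, a_i)$ with $f(s_i, e_A) \in f(S)$ one reads off $f^{\mathrm{ab}}(a_i) = h_i f(S)$. For the backward direction, the hypothesis $f^{\mathrm{ab}}(a_i) = h_i f(S)$ gives $f(e_S, a_i) \in h_i f(S)$, which lets me pick some $s_i \in S$ with $f(s_i, a_i) = h_i$; any two such choices differ by an element of $K_S$. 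By \lemref{GenReductive}, it suffices to arrange, possibly after replacing each $s_i$ by an element of $s_i K_S$, that the $s_i$ topologically generate $S$.

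I would first handle the special case $K_S = \{e_S\}$, in which I claim any initial choice of $s_i$ already generates $S$. Since $\langle h_i \rangle$ is dense in $H$, a standard argument using that $f$ is open shows that $\langle (s_i, a_i) \rangle \cdot K$ is dense in $S \times A$; projecting via $\pi_S: S \times A \to S$ then gives that $\langle s_i \rangle \cdot \pi_S(K)$ is dense in $S$. The key structural observation is that the normal subgroup $\pi_S(K) \trianglelefteq S$ is \emph{abelian}: the assignment $\pi_S(K) \to A/K_A$ sending the $S$-component of $(s, a) \in K$ to $-a + K_A$ is a well-defined homomorphism (using that $S$ and $A$ commute in $S \times A$), and it is injective precisely because $K_S$ is trivial. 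Any abelian normal subgroup of a semisimple Lie group is central -- its closure has no nontrivial abelian ideal as Lie algebra, hence is discrete and normal in a connected group -- so $\pi_S(K) \leqslant Z(S)$. Since $S$ is perfect, \lemref{CentralQuotient} says $Z(S)$ is non-generating, forcing $\langle s_i \rangle$ itself to be dense in $S$.

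The general case reduces to the above by factoring $f$ through $\tilde{f}: (S/K_S) \times A \to H$; by construction, $\ker \tilde{f}$ intersects $(S/K_S) \times \{e_A\}$ trivially, so the preceding step produces $\tilde{s}_i \in S/K_S$ such that $(\tilde{s}_i, a_i)$ are generating lifts of $h_i$ via $\tilde{f}$. In particular, the $\tilde{s}_i$ generate $S/K_S$ by \lemref{GenReductive}, and since $S \to S/K_S$ is an open epimorphism between connected semisimple Lie groups with $n \geqslant \tr{S \times A} \geqslant \tr S$, \corref{SsCase} yields lifts $s_i^* \in S$ of the $\tilde{s}_i$ that topologically generate $S$. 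The pairs $(s_i^*, a_i)$ are then the desired generating lifts of $h_i$ via $f$.

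For the moreover statement, if $S$ has finite centre and $H$ is Hausdorff, then $H$ is a Hausdorff Lie group and $S/K_S$ is a connected semisimple Lie group with finite centre mapping injectively into $H$; by the classical theorem that such subgroups of Lie groups are closed, $f(S)$ is closed in $H$, and hence $H/f(S)$ is Hausdorff. I expect the main obstacle to be the abelianness of $\pi_S(K)$ in the reduced case, a step which crucially exploits the commutativity of the two factors inside $S \times A$; once it is in hand, the rest of the argument simply assembles \corref{SsCase}, \lemref{CentralQuotient}, and \lemref{GenReductive}.
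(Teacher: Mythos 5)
Your proof is correct and takes a genuinely different route from the paper's. The paper proceeds by a chain of reductions: it first settles the cases $\Delta\subseteq A$ and $\Delta\subseteq S$ separately, then the case $\Delta$ discrete (passing to the centre-free quotient of $S$), and finally the general case, where it quotients out $\conn{\overline\Delta}$ and proves a splitting $\Delta=N_S\times p_A(\Delta)$ via the observation that $[\Delta,\Delta]$ projects onto the closed semisimple group $p_S(\Delta)$ while living inside $S$. Your argument bypasses most of this. You divide once by $K_S=K\cap S$ and then give a single clean structural observation for the reduced case $K_S=\{e\}$: by Goursat-type reasoning, $\pi_S(K)$ embeds into $A/K_A$ and is therefore abelian; an abelian normal subgroup of a connected semisimple group is central; and the centre is non-generating by \lemref{CentralQuotient}, so \emph{any} choice of $s_i$ already works. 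This is arguably more economical and makes the role of perfectness more transparent. Both approaches crucially use Ragozin's theorem that normal subgroups of connected semisimple groups are closed, and both use \lemref{GenReductive} and \corref{SsCase}.

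Two small gaps worth closing. First, you use $S/K_S$ as a Lie group and $S\to S/K_S$ as an open epimorphism onto a Hausdorff target, which requires $K_S$ to be closed; this is not automatic when $H$ is non-Hausdorff (so $K$ need not be closed), but it does hold because $K_S$ is normal in the connected semisimple group $S$, hence closed by Ragozin (the paper cites this for exactly the analogous step). Second, in the ``moreover'' part you assert that $S/K_S$ has finite centre whenever $S$ does; this is true but deserves a sentence (e.g.\ lift to the universal cover and note the projection of a finite-index central subgroup to a subproduct of the simple factors remains finite-index), and it is what licenses the classical closedness theorem you invoke. With those points noted, the argument is complete.
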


\begin{proof}
It is easy to see that, if $g_{1},\dots,g_{n}\in S\times A$ are generating
lifts of $h_{1},\dots,h_{n}$, then their images in $A$ are generating
lifts of $h_{1}f(S),\dots,h_{n}f(S)$. Thus, let $h_{1},\dots,h_{n}\in H$
be generators with $n\geqslant\tr{S\times A}$ such that $h_{1}f(S),\dots,h_{n}f(S)$
admit generating lifts to $A$ via $f^{\mathrm{ab}}$. We will find
generating lifts to $S\times A$.

Set $G=S\times A$ and let $\Delta$ be the kernel of $f$. Let $g_{1},\dots,g_{n}\in G$
be arbitrary lifts of $h_{1},\dots,h_{n}$. Since $h_{1},\dots,h_{n}$
generate $H$, we get that $\left\langle g_{1},\dots,g_{n}\right\rangle \Delta$
is dense in $G$; we need to prove there are $\delta_{1},\dots,\delta_{n}\in\Delta$
such that $g_{1}\delta_{1},\dots,g_{n}\delta_{n}$ generate $G$. 

We first prove the statement under the assumption that either $\Delta\subseteq A$
or $\Delta\subseteq S$. If $\Delta\subseteq A$, then $(S\times A)/\Delta=S\times(A/\Delta)$.
There are $s_{i}\in S,a_{i}\in A$ such that $h_{i}=(s_{i},a_{i}\Delta)$
for $i=1,\dots,n$. Since $a_{1}\Delta,\dots,a_{n}\Delta$ generate
$A/\Delta$, there are $\varepsilon_{1},\dots,\varepsilon_{n}\in\Delta$
such that $a_{1}\varepsilon_{1},\dots,a_{n}\varepsilon_{n}$ generate
$A$ (since the map $A\longrightarrow A/\Delta$ is exactly $f^{\mathrm{ab}}$),
so $(s_{1},a_{1}\varepsilon_{1}),\dots,(s_{n},a_{n}\varepsilon_{n})$
are lifts of $(s_{1},a_{1}\Delta),\dots,(s_{n},a_{n}\Delta)$ that
generate $S\times A$ (by \lemref{GenReductive}), as needed. The
case $\Delta\subseteq S$ is exactly the same, only naturally one
needs to use the fact $n\geqslant\tr S$ and \corref{SsCase}.

Next, we prove the statement under the assumption that $\Delta$ is
discrete. The centre of $S$ is non-generating as a subgroup of $S$
(\lemref{CentralQuotient}), and hence as a subgroup of $S\times A$
(by \lemref{GenReductive}). Denote it by $Z$; we have the following
commutative diagram:
\[
\xymatrix{S\times A\ar[r]^{f}\ar[d] & H\ar[d]\\
(S/Z\times A)\ar[r]_{\tilde{f}} & H/f(Z)
}
\]
where $\tilde{f}:(S/Z\times A)\to H/f(Z)$ is the epimorphism induced
by $f$. By \lemref{AbsGas}, the elements $h_{1},\dots,h_{n}\in H$
admit generating lifts to $S\times A$ if and only if the images of
$h_{1},\dots,h_{n}$ in $H/f(Z)$ admit generating lifts to $(S/Z\times A)$
via $\tilde{f}$, so we may have assumed $S$ is centre-free. If
$S$ is centre-free and $\Delta$ is discrete, $\Delta$ is contained
in $A$ (being discrete and normal, hence central), so we may proceed
as above. 

We can now prove the statement in general. Set $N=\overline{\Delta}$
and consider $\conn N$. Since $\left\langle g_{1},\dots,g_{n}\right\rangle \Delta$
is dense in $G$, we get that $g_{1}N,\dots,g_{n}N$ generate $G/N$,
which (by the isomorphism $G/N\cong(G/\conn N)/(N/\conn N)$) is the
same thing as saying $(g_{1}\conn N)N/\conn N,\dots,(g_{n}\conn N)N/\conn N$
generate $(G/\conn N)/(N/\conn N)$. Since $G/\conn N$ is still a
direct product of a semisimple Lie group and an abelian Lie group,
and $N/\conn N$ is a closed normal subgroup, we may proceed as in
the case $\Delta$ is discrete. This means that there are $\varepsilon_{1}\conn N,\dots,\varepsilon_{n}\conn N\in N/\conn N$
such that $g_{1}\varepsilon_{1}\conn N,\dots,g_{n}\varepsilon_{n}\conn N$
generate $G/\conn N$. Now, $\Delta$ is dense in $N$, hence its
image in $N/\conn N$ is dense in $N/\conn N$; but $N/\conn N$ is
discrete, so it means $\Delta$ surjects onto it, so we may assume
$\varepsilon_{1},\dots,\varepsilon_{n}\in\Delta$. Thus, $g_{1}\varepsilon_{1}\conn N,\dots,g_{n}\varepsilon_{n}\conn N\in G/\conn N$
generate $G/\conn N$, and we need to prove there are $\delta_{1},\dots,\delta_{n}\in\Delta$
such that $g_{1}\varepsilon_{1}\delta_{1},\dots,g_{n}\varepsilon_{n}\delta_{n}$
generate $G$.

In order to simplify notations, let us just assume $N$ is connected.
So we are now in the following scenario: $G=S\times A$ where $S$
is a connected semisimple Lie group and $A$ is a connected abelian
Lie group, $\Delta\trianglelefteqslant G$ is a not-necessarily-closed
normal subgroup such that $N=\overline{\Delta}$ is connected, and
$g_{1},\dots,g_{n}\in G$ are elements such that $g_{1}N,\dots,g_{n}N$
generate $G/N$. We need to prove there are $\delta_{1},\dots,\delta_{n}\in\Delta$
such that $g_{1}\delta_{1},\dots,g_{n}\delta_{n}$ generate $G$.

Denote by $p_{S}:S\times A\to S$ the projection, and consider $p_{S}(\Delta)$.
This is a normal subgroup of $S$, hence a closed subgroup (see for
example \cite{Rag72}). We have $p_{S}(N)=p_{S}(\overline{\Delta})\subseteq\overline{p_{S}(\Delta)}=p_{S}(\Delta)$,
and clearly $p_{S}(\Delta)\subseteq p_{S}(N)$, so $p_{S}(N)=p_{S}(\Delta)$.
Denote $N_{S}=p_{S}(N)=p_{S}(\Delta)$. This is a connected normal
subgroup of a semisimple Lie group, hence also a connected semisimple
Lie group, which means in particular that $[N_{S},N_{S}]=N_{S}$.
Now, consider $[\Delta,\Delta]$; on the one hand,
\[
p_{S}([\Delta,\Delta])=[p_{S}(\Delta),p_{S}(\Delta)]=[N_{S},N_{S}]=N_{S}.
\]
On the other hand, if $p_{A}:S\times A\to A$ is the projection onto
$A$, we clearly have that $p_{A}([\Delta,\Delta])\leqslant A$ is
trivial. In other words, $[\Delta,\Delta]$ is contained in $S$,
which means that $[\Delta,\Delta]=N_{S}$ (so, in particular, it is
closed). Clearly $[\Delta,\Delta]\subseteq\Delta$, so $N_{S}=p_{S}(\Delta)\subseteq\Delta$.
That is, $\Delta\trianglelefteqslant S\times A$ contains its own
projection into $S$, so $\Delta=p_{S}(\Delta)\times p_{A}(\Delta)=N_{S}\times p_{A}(\Delta)$.
In particular, this means that $N=N_{S}\times\overline{p_{A}(\Delta)}$. 

The rest follows from the case $\Delta$ is contained in either $A$
or $S$, since dividing by $N_{S}\times\rho_{A}(\Delta)$ is the same
thing as first dividing by $N_{S}$ and then by (the image of) $\rho_{A}(\Delta)$.

\subparagraph{$ $}

Observe that, if $\Delta=N_{S}\times p_{A}(\Delta)$ is closed, $p_{A}(\Delta)$
must be closed as well, so $\ker f^{\mathrm{ab}}=p_{A}(\Delta)$ is
closed. For this part, we need to assume the centre of $S$ is finite,
because otherwise we might have changed the closedness of $\Delta\cdot S$
when dividing by it above.
\end{proof}
\begin{rem}
If $S$ has infinite centre, it is possible for $H/f(S)$ not to be
Hausdorff even though $H$ is.
\end{rem}

\section{Abels--Noskov Groups\label{sec:AN}}

At this point, we have handled the cases where $G$ is semisimple,
abelian, or a product of such groups. Extending this to the general
case involves using the structure theory of Lie groups. Fortunately,
we can rely on the work of \cite{AN24} where the Frattini subgroup
of a connected Lie group is characterised. Elements of this subgroup
are irrelevant to generation problems, and by quotienting out this
subgroup, we obtain a group with a highly structured form, which we
refer to as \textit{Abels--Noskov groups}.

This section is dedicated to establishing Theorem \ref{thm:intro-main}
for this specific class. In the following section, we will show how
the proof of the general case (Theorem \ref{thm:intro-main}) can
be reduced to this setting.

\subsection{Generating Sets of Modules}

We start by collecting facts about finite dimensional representations
of abstract groups, and their generating sets. 

Let $L$ be a group, $V$ a finite dimensional vector space over $\mathbb{R}$
and $\rho:L\to\mathrm{GL}(V)$ a representation. We denote by $\mathbb{R}[\rho]$
the subalgebra of $\mathrm{End}_{\mathbb{R}}(V)$ generated by $\rho(L)$.
We call $V$ an $L$-module, and we call a subspace $U$ of $V$ a
\emph{submodule }if it is $L$-invariant (equivalently, $\mathbb{R}[\rho]$-invariant).
In this case, we denote by $\rho|_{U}:L\to\mathrm{GL}(U)$ the corresponding
representation. We say $V$ (or $\rho$) is \emph{irreducible }if
it admits no nontrivial submodules. It is basic fact in representation
theory that $V$ is a direct sum of irreducible submodules if, and
only if, every submodule admits a complement (i.e., every submodule
$U\subseteq V$ admits a submodule $W\subseteq V$ such that $V=U\oplus W$
as vector spaces, and hence as $L$-modules). In this case we say
that $V$ is \emph{completely reducible}. For a subset $S\subseteq V$,
the \emph{submodule generated by $S$ }is the smallest submodule of
$V$ containing $S$.

We say that $V$ is \emph{isotypic }if it is a direct sum of irreducible
representations which are all isomorphic to one another. The decomposition
is not unique, but obviously the number of irreducible submodules
in the sum is; it is called the \emph{multiplicity }of the isotypic
module $V$. We denote it by $m(V)$.

Every completely reducible module is a direct sum of isotypic submodules,
and this decomposition is unique. Given an irreducible representation
$\sigma$ of $L$, we denote by $V_{\sigma}$ the isotypic component
of $V$ corresponding to $\sigma$ (which might be zero). The \emph{$\sigma$-multiplicity
}of $V$ is the multiplicity of $V_{\sigma}$. We denote it by $m_{\sigma}(V)$.
We denote $m(V)=\max_{\sigma}\left\{ m_{\sigma}(V)\right\} $.

If $\sigma:L\to\mathrm{GL}(V)$ is an irreducible representation then
by Schur's lemma, the centraliser of $\mathbb{R}[\sigma]$ inside
$\mathrm{End}_{\R}(V)$ is a division algebra $k$ over $\mathbb{R}$,
and as such $k$ is isomorphic to $\mathbb{R}$, $\mathbb{C}$ or
$\mathbb{H}$ (the quaternions). In this case we may regard $V$ as
a vector space over $k$, and it holds that $\mathbb{R}[\sigma]\cong\mathrm{End}_{k}(V)$
(see \cite[Proposition C.1]{AN24}). If $\rho$ is isotypic of type
$\sigma$, then the diagonal embedding gives us an isomorphism $\mathbb{R}[\sigma]\to\mathbb{R}[\rho]$.
More generally, if $\rho$ is completely reducible then $\R[\rho]$
is a direct sum of $\R[\sigma]$, one for each irreducible $\sigma$
that occurs with nonzero multiplicity in $\rho$. 

Given a representation $\rho:L\to\mathrm{GL}(V)$, we get another
module, namely $\mathbb{R}[\rho]$ itself, where the action of $L$
on $\mathbb{R}[\rho]$ is given by $\ell.A=\rho(\ell)\circ A\in\mathbb{R}[\rho]$.
If $\rho$ is irreducible, then $\mathbb{R}[\rho]$ is isotypic of
type $\rho$, and its multiplicity is $\dim_{k}V$.
\begin{lem}[{\cite[Corollary C.2]{AN24}}]
\label{lem:gen-module}Let $\sigma$ be an irreducible representation
of a group $L$, and let $\rho:L\to\mathrm{GL}(V)$ be an isotypic
representation of $L$ of type $\sigma$ on a finite dimensional real
vector space $V$, which is isomorphic to the sum of $\ell$ copies
of $\sigma$. Denote by $\mathrm{d}_{\mathbb{R}[\rho]}(V)$ the minimal
number of elements needed to generate $V$ as an $\mathbb{R}[\rho]$-module.
Using the notations above, we have
\[
\mathrm{d}_{\mathbb{R}[\rho]}(V)=\left\lceil \frac{\ell}{\dim_{k}\sigma}\right\rceil =\left\lceil \frac{\dim_{\mathbb{R}}V}{\dim_{\mathbb{R}}\mathbb{R}[\rho]}\right\rceil .
\]
\end{lem}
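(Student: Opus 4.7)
The plan is to identify $V$ as a direct sum of copies of the unique simple $\mathbb{R}[\rho]$-module and count composition lengths. From the discussion preceding the lemma, since $\rho$ is isotypic of type $\sigma$, the diagonal embedding gives an algebra isomorphism $\mathbb{R}[\rho] \cong \mathbb{R}[\sigma] \cong \mathrm{End}_k(V_\sigma)$, where $V_\sigma$ is the underlying space of $\sigma$ and $k$ is its Schur division algebra. Setting $n = \dim_k V_\sigma$, the algebra $\mathbb{R}[\rho]$ is simple and (by Artin--Wedderburn, together with Frobenius' classification of real division algebras) isomorphic to an $n \times n$ matrix algebra over $k^{op}$; it has a unique simple left module up to isomorphism, namely $V_\sigma$, and $V \cong V_\sigma^{\oplus \ell}$ as $\mathbb{R}[\rho]$-modules.

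The first step is a composition-length computation. The standard decomposition of a simple Artinian algebra as a left module over itself yields $\mathbb{R}[\rho] \cong V_\sigma^{\oplus n}$, hence the free module $\mathbb{R}[\rho]^{\oplus m} \cong V_\sigma^{\oplus mn}$ has composition length $mn$, while $V$ has composition length $\ell$.

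The second step is to deduce $\mathrm{d}_{\mathbb{R}[\rho]}(V)$. A module generated by $m$ elements is a quotient of $\mathbb{R}[\rho]^{\oplus m}$; since composition length cannot increase under quotients, we need $mn \geqslant \ell$, so $m \geqslant \lceil \ell/n \rceil$. Conversely, whenever $mn \geqslant \ell$ there is an obvious surjection $\mathbb{R}[\rho]^{\oplus m} \cong V_\sigma^{\oplus mn} \twoheadrightarrow V_\sigma^{\oplus \ell} = V$ by projecting onto $\ell$ of the summands, so $\lceil \ell/n \rceil$ generators suffice. This yields $\mathrm{d}_{\mathbb{R}[\rho]}(V) = \lceil \ell / \dim_k \sigma \rceil$. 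For the equivalent formula, note $\dim_{\mathbb{R}} \mathbb{R}[\rho] = \dim_{\mathbb{R}} \mathrm{End}_k(V_\sigma) = \dim_k V_\sigma \cdot \dim_{\mathbb{R}} V_\sigma$ and $\dim_{\mathbb{R}} V = \ell \cdot \dim_{\mathbb{R}} V_\sigma$, so the ratio equals $\ell / \dim_k \sigma$ and the two ceilings agree.

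The main technical point is the identification $\mathbb{R}[\rho] \cong \mathrm{End}_k(V_\sigma)$ together with the isotypic decomposition of the regular module; both are standard consequences of Artin--Wedderburn, but one must be careful keeping track of left versus right $k$-module structures when passing between $\mathrm{End}_k(V_\sigma)$ and the matrix algebra $M_n(k^{op})$. Once the algebraic setup is in place, the counting argument is routine.
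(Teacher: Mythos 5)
Your proof is correct. Note that the paper itself does not prove this lemma; it is quoted verbatim from \cite[Corollary C.2]{AN24}, so there is no in-paper proof to compare against. Your argument supplies a clean and self-contained verification, and it uses exactly the ingredients the paper sets up in the surrounding discussion: the isomorphism $\mathbb{R}[\rho]\cong\mathrm{End}_k(V_\sigma)$ (quoted from \cite[Proposition~C.1]{AN24}), and the fact that $\mathbb{R}[\rho]$, as a module over itself, is isotypic of type $\sigma$ with multiplicity $\dim_k V_\sigma$.

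The logic is sound in both directions. Since $\mathbb{R}[\rho]$ is finite-dimensional and simple, every $\mathbb{R}[\rho]$-module is semisimple with a well-defined composition length, so the surjection $\mathbb{R}[\rho]^{\oplus m}\twoheadrightarrow V$ (defined by an $m$-element generating set) forces $m\,\dim_k V_\sigma \geqslant \ell$; conversely, projecting $V_\sigma^{\oplus m\dim_k V_\sigma}$ onto $\ell$ of the summands shows $\lceil \ell/\dim_k V_\sigma\rceil$ generators suffice. Your dimension count giving the equivalence of the two ceiling expressions is also correct: $\dim_{\mathbb{R}}\mathbb{R}[\rho]=(\dim_k V_\sigma)(\dim_{\mathbb{R}}V_\sigma)$ and $\dim_{\mathbb{R}}V=\ell\,\dim_{\mathbb{R}}V_\sigma$, so the ratio is $\ell/\dim_k V_\sigma$. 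You correctly flag the left-versus-right convention when passing between $\mathrm{End}_k(V_\sigma)$ and $M_n(k^{op})$; this affects only bookkeeping, not the composition length, which is what the argument actually uses. One small point worth making explicit: you pass between $L$-module and $\mathbb{R}[\rho]$-module decompositions of $V$, which is legitimate precisely because $\mathbb{R}[\rho]$ is generated by $\rho(L)$, so the two categories of submodules of $V$ coincide. With that observation made, there is no gap.
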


\begin{cor}[{\cite[Corollary C.3]{AN24}}]
\label{cor:gen-module}Let $\rho:L\to\mathrm{GL}(V)$ be a completely
reducible finite dimensional representation of a group $L$ on real
vector space $\mathbb{R}$ with isotypic components $\left\{ V_{\sigma}\right\} _{\sigma}$.
Let $k_{\sigma}$ be the centraliser of $\mathbb{R}[\rho|_{V_{\sigma}}]$,
so that $V_{\sigma}$ is a vector space over the division algebra
$k_{\sigma}$. Denote by $\mathrm{d}_{\mathbb{R}[\rho]}(V)$ the minimal
number of elements needed to generate $V$ as an $\mathbb{R}[\rho]$-module.
Using the notations above, we have
\[
\mathrm{d}_{\mathbb{R}[\rho]}(V)=\max_{\sigma}\left\{ \left\lceil \frac{m_{\sigma}(V)}{\dim_{k_{\sigma}}\sigma}\right\rceil \right\} =\max_{\sigma}\mathrm{d}_{\mathbb{R}[\rho|_{V_{\sigma}}]}(V_{\sigma}).
\]
\end{cor}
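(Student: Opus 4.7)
My plan is to reduce the computation to the isotypic case handled by \lemref{gen-module} by exploiting the canonical decomposition both of the module $V$ and of the algebra $\R[\rho]$ into isotypic components. Concretely, the complete reducibility of $\rho$ gives the canonical decomposition $V=\bigoplus_{\sigma}V_{\sigma}$, and, as remarked in the paragraph preceding the statement, the algebra $\R[\rho]$ decomposes as a direct sum of unital subalgebras $\R[\rho]=\bigoplus_{\sigma}\R[\rho|_{V_{\sigma}}]$ acting on distinct summands. The key point I will use (and verify) is that, because this is a direct sum of unital algebras, $\R[\rho]$ contains for every $\sigma$ the idempotent $e_{\sigma}$ which acts as the identity on $V_{\sigma}$ and as zero on every other isotypic component.

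For the lower bound, if $v_{1},\dots,v_{n}$ generate $V$ as an $\R[\rho]$-module, then applying the idempotent $e_{\sigma}$ shows that $e_{\sigma}v_{1},\dots,e_{\sigma}v_{n}\in V_{\sigma}$ generate $V_{\sigma}$ as an $\R[\rho|_{V_{\sigma}}]$-module. Thus $\mathrm{d}_{\R[\rho]}(V)\geqslant\mathrm{d}_{\R[\rho|_{V_{\sigma}}]}(V_{\sigma})$ for every $\sigma$, giving $\mathrm{d}_{\R[\rho]}(V)\geqslant\max_{\sigma}\mathrm{d}_{\R[\rho|_{V_{\sigma}}]}(V_{\sigma})$.

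For the upper bound, set $N=\max_{\sigma}\mathrm{d}_{\R[\rho|_{V_{\sigma}}]}(V_{\sigma})$. For each $\sigma$, pick generators $v_{1}^{\sigma},\dots,v_{N_{\sigma}}^{\sigma}$ of $V_{\sigma}$ over $\R[\rho|_{V_{\sigma}}]$ with $N_{\sigma}\leqslant N$, and pad with zeros so that $v_{i}^{\sigma}$ is defined for every $i=1,\dots,N$. Define $w_{i}=\sum_{\sigma}v_{i}^{\sigma}\in V$ for $i=1,\dots,N$. Since $e_{\sigma}w_{i}=v_{i}^{\sigma}$ and $\R[\rho|_{V_{\sigma}}]\subseteq\R[\rho]$, the submodule of $V$ generated by $w_{1},\dots,w_{N}$ contains each $V_{\sigma}$, and hence all of $V$. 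This shows $\mathrm{d}_{\R[\rho]}(V)\leqslant N$. Finally, the first equality in the statement follows by applying \lemref{gen-module} to each $\rho|_{V_{\sigma}}$.

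The only delicate point I anticipate is confirming that $\R[\rho]$ really is the internal direct sum $\bigoplus_{\sigma}\R[\rho|_{V_{\sigma}}]$ (so that the $e_{\sigma}$ exist inside $\R[\rho]$ rather than merely inside $\mathrm{End}_{\R}(V)$). This is essentially the Artin--Wedderburn decomposition for the semisimple finite dimensional $\R$-algebra $\R[\rho]$: distinct isotypic components are annihilated by different two-sided ideals, and the centre of $\R[\rho]$ contains the projections onto them. Once this structural fact is in place, the rest of the argument is a straightforward packaging of \lemref{gen-module}.
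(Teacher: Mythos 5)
The paper does not include its own proof of this corollary; it is quoted directly from \cite[Corollary C.3]{AN24}. Your argument is correct and is the natural deduction from Lemma \ref{lem:gen-module}: the semisimple algebra $\R[\rho]$ decomposes as an internal direct sum $\bigoplus_\sigma \R[\rho|_{V_\sigma}]$ of its simple blocks (a fact the paper already records just before the statement), and the associated central idempotents $e_\sigma\in\R[\rho]$ give both the lower bound (apply $e_\sigma$ to a generating tuple) and the upper bound (bundle block-wise generators into $w_i=\sum_\sigma v_i^\sigma$, noting $e_\sigma w_i = v_i^\sigma$). The ``delicate point'' you flag — that the $e_\sigma$ really live in $\R[\rho]$ and not merely in $\mathrm{End}_\R(V)$ — is exactly the Artin--Wedderburn/Peirce decomposition of a finite-dimensional semisimple algebra, and $\R[\rho]$ is semisimple because it acts faithfully on the completely reducible module $V$; your appeal to this is appropriate and complete. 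No gaps.
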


\subsection{Generating Sets of Abels--Noskov Groups}
\begin{defn}
\label{def:abels-noskov}An \emph{Abels--Noskov group} is a connected
Lie group of the form $(S\times A)\ltimes_{\rho}V$, where
\begin{enumerate}
\item $S$ is a connected semisimple Lie group with finite centre;
\item $A$ is a connected abelian Lie group;
\item $V$ is a finite dimensional real vector space;
\item The semi-direct product is defined via a representation $\rho:(S\times A)\to\mathrm{GL}(V)$
that is completely reducible and admits no non-trivial fixed vectors. 
\end{enumerate}
\end{defn}

We may now quote the beautiful result of Abels and Noskov which characterizes
generating sets.
\begin{thm}[{\cite[Lemmas 6.4 and 6.5]{AN24}}]
\label{thm:gen-of-AN}Let $L\ltimes_{\rho}V$ be an Abels--Noskov
group and let $(\ell_{1},v_{1}),\dots,(\ell_{n},v_{n})\in L\ltimes_{\rho}V$.
Let $\varepsilon_{\boldsymbol{\ell}}:\mathbb{R}[\rho]^{n}\to\mathbb{R}[\rho]$
be the $\mathbb{R}[\rho]$-module homomorphism 
\[
\varepsilon_{\boldsymbol{\ell}}(a_{1},\dots,a_{n})=\sum_{i=1}^{n}a_{i}(\bbm 1-\rho(\ell_{i}))
\]
and let $\alpha_{\boldsymbol{v}}:\mathbb{R}[\rho]^{n}\to V$ be the
$\mathbb{R}[\rho]$-module homomorphism
\[
\alpha_{\boldsymbol{v}}(a_{1},\dots,a_{n})=\sum_{i=1}^{n}a_{i}v_{i}.
\]
Then $(\ell_{1},v_{1}),\dots,(\ell_{n},v_{n})$ generate $L\ltimes_{\rho}V$
if and only if the following two conditions hold: $\ell_{1},\dots,\ell_{n}$
generate $L$, and 
\[
\alpha_{\boldsymbol{v}}(\ker\varepsilon_{\boldsymbol{\ell}})=V.
\]
\end{thm}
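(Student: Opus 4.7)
The plan is to work inside the matrix embedding
\[
G = L\ltimes_\rho V \hookrightarrow \mathrm{GL}(V\oplus\R),\qquad (\ell,v)\mapsto\begin{pmatrix}\rho(\ell) & v\\ 0 & 1\end{pmatrix},
\]
and use Fox calculus to read off the $V$-component of any word in the generators $g_i=(\ell_i,v_i)$ as a linear combination of the $v_i$'s weighted by Fox derivatives of the word.

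\textbf{Step 1 (Fox--Magnus identity).} Let $F_n=\langle x_1,\dots,x_n\rangle$, with $\phi:F_n\to L$, $x_i\mapsto\ell_i$, and $\tilde\phi:F_n\to G$, $x_i\mapsto g_i$. A direct matrix multiplication gives, for every $w\in F_n$,
\[
\tilde\phi(w)=\bigl(\phi(w),D(w)\bigr),\qquad D(w)=\sum_{i=1}^n\tfrac{\partial w}{\partial x_i}\cdot v_i,
\]
where $\tfrac{\partial w}{\partial x_i}\in\Z[\rho(L)]\subseteq\R[\rho]$ are the usual Fox derivatives, together with the fundamental identity
\[
\rho(\phi(w))-\bbm 1 \;=\; \sum_{i=1}^n\tfrac{\partial w}{\partial x_i}\bigl(\rho(\ell_i)-\bbm 1\bigr).
\]
In particular, for every $w\in\ker\phi$ the tuple $(\partial_i w)_i$ lies in $\ker\varepsilon_{\boldsymbol\ell}$ and $D(w)\in\alpha_{\boldsymbol v}(\ker\varepsilon_{\boldsymbol\ell})$.

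\textbf{Step 2 (Necessity).} Suppose the $g_i$ topologically generate $G$. Projecting to $L$ gives the first condition. Set $W=\alpha_{\boldsymbol v}(\ker\varepsilon_{\boldsymbol\ell})$: it is the image of an $\R[\rho]$-linear map between finite-dimensional modules, hence a closed $\R[\rho]$-submodule of $V$. Complete reducibility furnishes a complement $W'$, giving a projection $G\twoheadrightarrow L\ltimes W'$ under which the cocycle $\bar D\colon F_n\to W'$ induced by $\tilde\phi$ vanishes on $\ker\phi$ (by Step 1) and so factors through $\phi(F_n)\leq L$. The image of $\tilde\phi(F_n)$ in $L\ltimes W'$ is thus the graph of a map $\phi(F_n)\to W'$ and is dense; using the no-fixed-vectors hypothesis (so that cocycles of the connected group $L$ in $W'$ are rigid) one shows the closure of this graph forces $W'=0$, i.e., $W=V$.

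\textbf{Step 3 (Sufficiency).} Assume the $\ell_i$ generate $L$ and $\alpha_{\boldsymbol v}(\ker\varepsilon_{\boldsymbol\ell})=V$. Let $N=\overline{\langle g_i\rangle}\cap V$. The closure projects onto $L$, so conjugation makes $N$ an $L$-invariant closed subgroup of $V$; since any closed $L$-invariant subgroup of $V$ is a linear subspace (a connected group acts trivially on a discrete quotient, and the no-fixed-vectors hypothesis rules out a lattice factor), $N$ is a closed $\R[\rho]$-submodule. By Step 1, $N$ contains every $D(w)$ with $\phi(w)=\bbm 1$. The heart of the proof is the claim that the closed $\R[\rho]$-submodule of $\R[\rho]^n$ spanned by the tuples $\{(\partial_i w)_i:\phi(w)=\bbm 1\}$ is all of $\ker\varepsilon_{\boldsymbol\ell}$; applying $\alpha_{\boldsymbol v}$ then yields $N\supseteq\alpha_{\boldsymbol v}(\ker\varepsilon_{\boldsymbol\ell})=V$, as desired.

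\textbf{Main obstacle.} The chief difficulty is the density claim in Step 3: realising every element of $\ker\varepsilon_{\boldsymbol\ell}\subseteq\R[\rho]^n$ as a limit of $\R[\rho]$-combinations of Fox derivatives of null words. The approach is to write down explicit null words --- iterated commutators $[x_i,x_j]\cdot r_{ij}$ with $r_{ij}$ expressing $[\ell_i,\ell_j]^{-1}$ as a word in the $\ell_k$, and their conjugates by arbitrary elements of $F_n$ --- whose Fox-derivative tuples, together with their $\R[\rho]$-multiples, topologically span $\ker\varepsilon_{\boldsymbol\ell}$. One then upgrades the $\Z[\rho(L)]$-coefficients arising from Fox calculus to arbitrary $\R[\rho]$-coefficients by exploiting that $\rho(L)$ topologically spans the finite-dimensional algebra $\R[\rho]$ and using complete reducibility to work isotypic component by isotypic component.
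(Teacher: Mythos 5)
The paper does not prove this theorem; it is cited directly from \cite{AN24} (Lemmas 6.4 and 6.5). So the comparison here is against correctness of your argument rather than against a proof given in the paper.

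Your Step 1 (the Fox--Magnus identity for the $V$-component of a word in $L\ltimes_\rho V$) is correct and standard. The problem is Step 3, and the gap you flag as the ``main obstacle'' is not merely a difficulty to overcome -- the density claim you need there is false in general. You want the closed $\R[\rho]$-submodule of $\R[\rho]^n$ spanned by $\{(\partial_i w)_i : w\in\ker\phi\}$ to equal $\ker\varepsilon_{\boldsymbol\ell}$. But $\ker\phi$ can be trivial while $\ker\varepsilon_{\boldsymbol\ell}$ is not: take $L=\mathrm{PSL}_2(\R)$, $\ell_1,\ell_2$ generating a free \emph{and} dense subgroup of $L$, and $V=\mathfrak{sl}_2(\R)$ the adjoint representation. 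Then $\phi:F_2\to L$ is injective, so the only null word is $1$ and the Fox-derivative tuples of null words span $\{0\}$; yet $\varepsilon_{\boldsymbol\ell}:\R[\rho]^2\to\R[\rho]$ is surjective, so $\ker\varepsilon_{\boldsymbol\ell}\cong\R[\rho]\cong M_3(\R)\neq 0$. And the theorem is not vacuous for such $\ell_i$: by \lemref{gen-module} one has $\mathrm{d}_{\R[\rho]}(V)=1$, so there do exist $v_1,v_2$ with $\alpha_{\boldsymbol v}(\ker\varepsilon_{\boldsymbol\ell})=V$, which must then generate $G$ -- but your mechanism cannot detect this. Your proposed ``explicit null words'' $[x_i,x_j]r_{ij}$ with $\phi(r_{ij})=[\ell_i,\ell_j]^{-1}$ do exist formally, but when $\phi$ is injective they are forced to be the identity of $F_n$, with zero Fox derivatives.

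The underlying issue is that $N=\overline{\langle g_i\rangle}\cap V$ is produced by taking a \emph{topological} closure, and the elements of $N$ you can access are limits $D(w_k)$ where $\phi(w_k)\to 1$ without $\phi(w_k)=1$; tracking these limits (and the potentially unbounded Fox-derivative coefficients) is the actual content of the theorem, and it cannot be reduced to exact null words. A route that works, and stays closer to your Step 1, is the following. For sufficiency: $N$ is a closed $L$-submodule (as you show), and $\bar G/N$ is a closed complement to $V/N$ in $L\ltimes(V/N)$; by vanishing of continuous $H^1(L,V/N)$ for $L$ connected and $V/N$ completely reducible without invariants, $\bar G/N$ is a conjugate $(1,u)(L\times\{0\})(1,u)^{-1}$, so $v_i\equiv u-\rho(\ell_i)u \pmod N$; then for $(a_i)\in\ker\varepsilon_{\boldsymbol\ell}$ one gets $\alpha_{\boldsymbol v}(a)\equiv\varepsilon_{\boldsymbol\ell}(a)u=0\pmod N$, hence $V=\alpha_{\boldsymbol v}(\ker\varepsilon_{\boldsymbol\ell})\subseteq N$. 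For necessity, note that $\alpha_{\boldsymbol v'}(\ker\varepsilon_{\boldsymbol\ell})=0$ (where $v_i'$ is the $W'$-component) means $\alpha_{\boldsymbol v'}$ factors through $\varepsilon_{\boldsymbol\ell}$, giving $v_i'=(\bbm 1-\rho(\ell_i))u$ for a single $u\in W'$; then $\langle\bar g_i\rangle$ is contained in the proper closed subgroup $(1,u)(L\times\{0\})(1,u)^{-1}$ of $L\ltimes W'$, contradicting generation unless $W'=0$. This uses the fundamental Fox identity only through the module identity $\varepsilon_{\boldsymbol\ell}(e_i)=\bbm 1-\rho(\ell_i)$, and avoids any density claim about Fox derivatives of relators. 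Your Step 2, while heuristically pointing in the right direction with the remark on cocycle rigidity, should be replaced by this concrete linear-algebra factorisation; as written it does not rule out a graph of an (abstract, discontinuous) cocycle from a dense subgroup of $L$ being dense in $L\ltimes W'$.
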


Observe that every $\mathbb{R}[\rho]$-module homomorphism $\alpha:\mathbb{R}[\rho]^{n}\to V$
is of the form $\alpha_{\boldsymbol{v}}$ for $\boldsymbol{v}=(\alpha(e_{1}),\dots,\alpha(e_{n}))$,
where $e_{i}\in\mathbb{R}[\rho]^{n}$ is $\bbm 1$ in the $i^{\text{th}}$
coordinate and $0$ in the others. If $\ell_{1},\dots,\ell_{n}$ generate
$L$, then $\varepsilon_{\boldsymbol{\ell}}$ is always surjective,
by \cite[Lemma C.4]{AN24}. 
\begin{thm}[{\cite[Theorem 6.3]{AN24}}]
\label{thm:AN-tr}Let $G=L\ltimes_{\rho}V$ be an Abels--Noskov
group. Then
\[
\tr G=\max\left\{ \tr L,\mathrm{d}_{\mathbb{R}[\rho]}(V)+1\right\} 
\]
\end{thm}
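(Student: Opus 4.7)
The plan is to apply the generation criterion in \thmref{gen-of-AN} and exploit the fact that, because $\rho$ is completely reducible, the finite dimensional $\mathbb{R}$-algebra $\mathbb{R}[\rho]$ is semisimple. In particular, every short exact sequence of finite dimensional $\mathbb{R}[\rho]$-modules splits, and Krull--Schmidt holds for them (one just compares multiplicities isotype by isotype). I will prove the two inequalities separately.

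For the lower bound $\tr G\geq\max\{\tr L,\mathrm{d}_{\mathbb{R}[\rho]}(V)+1\}$, suppose that $(\ell_{1},v_{1}),\dots,(\ell_{n},v_{n})$ generate $G$. Then $\ell_{1},\dots,\ell_{n}$ generate $L$, so certainly $n\geq\tr L$. By \thmref{gen-of-AN} together with the remark immediately following it (which quotes Lemma C.4 of \cite{AN24}), the map $\varepsilon_{\boldsymbol{\ell}}:\mathbb{R}[\rho]^{n}\to\mathbb{R}[\rho]$ is surjective, so semisimplicity gives a splitting
\[
\mathbb{R}[\rho]^{n}\cong\ker\varepsilon_{\boldsymbol{\ell}}\oplus\mathbb{R}[\rho].
\]
Comparing multiplicities of each simple summand of $\mathbb{R}[\rho]$ on both sides forces $\ker\varepsilon_{\boldsymbol{\ell}}\cong\mathbb{R}[\rho]^{n-1}$. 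Since $\alpha_{\boldsymbol{v}}(\ker\varepsilon_{\boldsymbol{\ell}})=V$, the module $V$ is a quotient of $\mathbb{R}[\rho]^{n-1}$, hence $\mathrm{d}_{\mathbb{R}[\rho]}(V)\leq n-1$.

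For the upper bound, set $n=\max\{\tr L,\mathrm{d}_{\mathbb{R}[\rho]}(V)+1\}$ and pick any generators $\ell_{1},\dots,\ell_{n}$ of $L$. The same splitting argument yields $\ker\varepsilon_{\boldsymbol{\ell}}\cong\mathbb{R}[\rho]^{n-1}$, and as $n-1\geq\mathrm{d}_{\mathbb{R}[\rho]}(V)$ there is a surjective $\mathbb{R}[\rho]$-linear map $\varphi:\ker\varepsilon_{\boldsymbol{\ell}}\to V$. Extending $\varphi$ by zero on a chosen complement of $\ker\varepsilon_{\boldsymbol{\ell}}$ in $\mathbb{R}[\rho]^{n}$ produces $\alpha:\mathbb{R}[\rho]^{n}\to V$ whose restriction to $\ker\varepsilon_{\boldsymbol{\ell}}$ is still $\varphi$ and therefore still surjective. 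Setting $v_{i}:=\alpha(e_{i})$ gives $\alpha=\alpha_{\boldsymbol{v}}$, so by \thmref{gen-of-AN} the elements $(\ell_{1},v_{1}),\dots,(\ell_{n},v_{n})$ generate $G$.

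The only real obstacle is the identification $\ker\varepsilon_{\boldsymbol{\ell}}\cong\mathbb{R}[\rho]^{n-1}$; it is essentially forced by complete reducibility, but one should be careful that ``rank'' here is meant isotype by isotype, since $\mathbb{R}[\rho]$ decomposes as a product of matrix algebras over $\mathbb{R}$, $\mathbb{C}$ or $\mathbb{H}$ and free summands have to be cancelled within each factor. Everything else is a direct application of \thmref{gen-of-AN} and the existence of splittings in a semisimple module category.
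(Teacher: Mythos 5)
The paper does not actually prove this statement; it quotes it from \cite[Theorem 6.3]{AN24}, so there is no in-paper proof to compare against. Judged on its own, your argument is correct and is a clean derivation of the theorem from the two cited facts: the generation criterion of \thmref{gen-of-AN} (Lemmas 6.4--6.5 of \cite{AN24}) and the surjectivity of $\varepsilon_{\boldsymbol{\ell}}$ for generating $\boldsymbol{\ell}$ (Lemma C.4 of \cite{AN24}, via the remark following \thmref{gen-of-AN}).

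The two load-bearing module-theoretic facts you invoke are both legitimate: since $\rho$ is completely reducible and $V$ is finite-dimensional, $\mathbb{R}[\rho]$ is a semisimple finite-dimensional $\mathbb{R}$-algebra, so every short exact sequence of $\mathbb{R}[\rho]$-modules splits, and finite-length semisimple modules satisfy cancellation, giving $\ker\varepsilon_{\boldsymbol{\ell}}\cong\mathbb{R}[\rho]^{n-1}$ from $\ker\varepsilon_{\boldsymbol{\ell}}\oplus\mathbb{R}[\rho]\cong\mathbb{R}[\rho]^{n}$. Your closing caveat about verifying this isotype by isotype is exactly the right thing to say, and is also the mechanism the paper itself uses later in the proof of \lemref{submodule-case}, where the same multiplicity count over each isotypic component appears. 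One small scope note: the stated formula has an off-by-one degeneracy when $G$ (hence $L$ and $V$) is trivial, where the left side is $0$ and the right side is $1$; your argument, like the cited statement, implicitly assumes $G$ nontrivial, which costs nothing since any nontrivial connected Lie group has $\tr L\geq 1$ and $V\neq 0$ forces $L$ nontrivial.
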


\subsection{The Case \texorpdfstring{$\ker f\subseteq V$}{the Kernel is Contained in V}}
\begin{lem}
\label{lem:RedToIsotypic}Let $G=L\ltimes_{\rho}V$ be an Abels--Noskov
group, let $\left\{ V_{\sigma}\right\} _{\sigma}$ be the isotypic
components of $V$, and let $\pi_{\sigma}:L\ltimes_{\rho}V\to L\ltimes_{\rho|_{V_{\sigma}}}V_{\sigma}$
be the projections. Let $X\subseteq G$. Then $X$ generates $G$
if and only if $\pi_{\sigma}(X)$ generates $L\ltimes_{\rho|_{V_{\sigma}}}V_{\sigma}$
for every $\sigma$.
\end{lem}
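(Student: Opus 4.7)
The strategy is to reduce the generation question to an algebraic condition on $V$ via the characterisation \thmref{gen-of-AN}, and then exploit the fact that $\R[\rho]$ decomposes as a direct sum of two-sided ideals indexed by isotypic components.

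The forward implication is immediate since each $\pi_{\sigma}$ is a continuous epimorphism. For the converse, I would first pass to a finite subset $X_{0}=\{(\ell_{1},v_{1}),\dots,(\ell_{n},v_{n})\}\subseteq X$ such that $\pi_{\sigma}(X_{0})$ still generates $L\ltimes V_{\sigma}$ for every $\sigma$; this is possible because there are only finitely many nontrivial isotypic components (as $V$ is finite dimensional) and each $L\ltimes V_{\sigma}$ is topologically finitely generated. It then suffices to prove that $X_{0}$ generates $G$.

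The crucial observation is that complete reducibility of $\rho$ gives a decomposition into two-sided ideals
\[
\R[\rho]=\bigoplus_{\sigma}\R[\rho|_{V_{\sigma}}],
\]
in which the unit $e_{\sigma}$ of the summand $\R[\rho|_{V_{\sigma}}]$ acts on $V$ as the projection onto $V_{\sigma}$, and $\bbm 1=\sum_{\sigma}e_{\sigma}$. Writing $a_{i}^{\sigma}=e_{\sigma}a_{i}\in\R[\rho|_{V_{\sigma}}]$ and $v_{i}^{\sigma}=e_{\sigma}v_{i}\in V_{\sigma}$, a direct calculation (using that $a_{i}^{\sigma}(\bbm 1-\rho(\ell_{i}))=a_{i}^{\sigma}(e_{\sigma}-e_{\sigma}\rho(\ell_{i}))$ lies in $\R[\rho|_{V_{\sigma}}]$) yields
\[
\varepsilon_{\boldsymbol{\ell}}=\bigoplus_{\sigma}\varepsilon^{\sigma}_{\boldsymbol{\ell}},\qquad\alpha_{\boldsymbol{v}}=\bigoplus_{\sigma}\alpha^{\sigma}_{\boldsymbol{v}^{\sigma}},
\]
where the superscript $\sigma$ denotes the analogous map for $L\ltimes V_{\sigma}$ and $\boldsymbol{v}^{\sigma}=(v_{1}^{\sigma},\dots,v_{n}^{\sigma})$. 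This is the main step that requires care.

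Taking kernels and images across the direct sum gives $\alpha_{\boldsymbol{v}}(\ker\varepsilon_{\boldsymbol{\ell}})=\bigoplus_{\sigma}\alpha^{\sigma}_{\boldsymbol{v}^{\sigma}}(\ker\varepsilon^{\sigma}_{\boldsymbol{\ell}})$. Applying \thmref{gen-of-AN} to the hypothesis on each $\pi_{\sigma}(X_{0})$ yields $\alpha^{\sigma}_{\boldsymbol{v}^{\sigma}}(\ker\varepsilon^{\sigma}_{\boldsymbol{\ell}})=V_{\sigma}$, along with the statement that $\ell_{1},\dots,\ell_{n}$ generate $L$. Summing over $\sigma$ gives $\alpha_{\boldsymbol{v}}(\ker\varepsilon_{\boldsymbol{\ell}})=V$, and a final application of \thmref{gen-of-AN} in the reverse direction completes the proof. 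The principal obstacle is entirely in verifying the algebraic splitting of $\varepsilon_{\boldsymbol{\ell}}$ and $\alpha_{\boldsymbol{v}}$ cleanly; once this is in place, the remainder is formal bookkeeping.
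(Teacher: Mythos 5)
Your central algebraic step---decomposing $\mathbb{R}[\rho]$ as a direct sum of two-sided ideals $\mathbb{R}[\rho|_{V_{\sigma}}]$ with central idempotents $e_{\sigma}$ acting as the projections $p_{\sigma}$, checking that $\varepsilon_{\boldsymbol{\ell}}$ and $\alpha_{\boldsymbol{v}}$ split compatibly along this decomposition, and applying Theorem~\ref{thm:gen-of-AN} componentwise---is correct and is exactly what the paper's (much terser) proof does; the paper merely packages the same computation as the observation that for any submodule $U\subseteq V$ one has $p_{\sigma}(U)=U\cap V_{\sigma}$, so that $U=V$ iff $U$ surjects onto every isotypic component.

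There is, however, one genuine gap: the reduction to a finite subset $X_{0}\subseteq X$. You justify this by saying each $L\ltimes V_{\sigma}$ is topologically finitely generated, but that property does not imply that an arbitrary topological generating set contains a finite topological generating subset. For instance $\mathbb{Q}$ topologically generates $\mathbb{R}$, which is $2$-generated, yet every finite subset of $\mathbb{Q}$ generates a discrete cyclic subgroup. So the inference ``$L\ltimes V_{\sigma}$ is topologically finitely generated, hence some finite $X_{0}\subseteq X$ works for every $\sigma$'' does not hold. This matters because the lemma is stated for an arbitrary subset $X\subseteq G$, while Theorem~\ref{thm:gen-of-AN} is a statement about finite tuples only. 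In fairness, the paper's own one-line proof also implicitly treats only finite $X$, and in every application in the paper $X$ is the finite set of lifts of a finite generating tuple, so the substance of the argument is sound; but as written, the step you use to bridge from arbitrary $X$ to finite $X_{0}$ is not justified, and if the lemma is to be proved for genuinely infinite $X$ a different argument (not passing through Theorem~\ref{thm:gen-of-AN} directly) would be needed.
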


\begin{proof}
First, recall that if $U\subseteq V$ is a submodule, then the isotypic
components of $U$ are $U\cap V_{\sigma}$, and that $U\cap V_{\sigma}=p_{\sigma}(U)$
(where $p_{\sigma}:V\to V_{\sigma}$ is the projection). In particular,
if $U$ projects onto every isotypic component of $V$, then $U=V$.
With this in mind, the lemma follows immediately from \thmref{gen-of-AN}.
\end{proof}
\begin{lem}
Let $L$ be a path-connected topological group and let $\rho:L\to\mathrm{End}(V)$
be completely reducible representation of $L$ on a finite dimensional
real vector space $V$ without nonzero fixed vectors. Let $U\leqslant V$
be a (not-necessarily-closed) subgroup of (the additive group of)
$V$. If $U$ is $L$-invariant, then it is a submodule (and in particular
closed).
\end{lem}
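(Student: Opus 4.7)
The plan is to use Lie-theoretic differentiation to show that $U$ contains an open neighborhood of a point, and hence an open neighborhood of the origin, which forces $U$ to equal the ambient space.

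First, I reduce to the case that $U$ spans $V$ over $\R$. The $\R$-span $M := \mathrm{span}_{\R}(U)$ is an $L$-invariant subspace (since $L$ acts $\R$-linearly), hence a submodule of $V$; by complete reducibility it has an $L$-invariant complement, and it inherits the no-nonzero-fixed-vectors property. So replacing $V$ by $M$, I may assume $U$ spans $V$. If $U = 0$ there is nothing to prove; otherwise choose $u_1,\dots,u_n \in U$ forming an $\R$-basis of $V$.

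Next, in the case where $L$ is a connected Lie group with Lie algebra $\mathfrak{l}$ (which is the setting of the Abels--Noskov applications, where $L = S \times A$), I introduce the smooth map
\[
\sigma\colon L^n \longrightarrow V, \qquad \sigma(g_1,\dots,g_n) = \sum_{j=1}^n \rho(g_j) u_j,
\]
whose image lies inside $U$ because each $\rho(g_j) u_j \in U$ and $U$ is a subgroup. Its differential at $(e,\dots,e)$ sends $(X_1,\dots,X_n) \in \mathfrak{l}^n$ to $\sum_j d\rho(X_j) u_j$, so the image of $d\sigma|_{(e,\dots,e)}$ is $d\rho(\mathfrak{l}) \cdot \mathrm{span}_{\R}(u_1,\dots,u_n) = d\rho(\mathfrak{l}) V$. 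The heart of the argument is showing $d\rho(\mathfrak{l}) V = V$: this subspace is $L$-invariant (via $\rho(g) d\rho(X) = d\rho(\mathrm{Ad}_g X) \rho(g)$), and on the quotient $V/d\rho(\mathfrak{l}) V$ every $\rho(\exp X)$ acts as the identity, so the connected group $L$ (generated by $\exp(\mathfrak{l})$) acts trivially; by complete reducibility this trivial quotient corresponds to a trivial direct summand of $V$, which must vanish by the no-fixed-vectors hypothesis.

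With $d\sigma$ surjective at $(e,\dots,e)$, the submersion theorem produces a neighborhood $N$ of $\sigma(e,\dots,e) = \sum_j u_j$ in $V$ lying entirely in $U$. Translating by the element $\sum_j u_j \in U$, the subgroup $U$ contains the neighborhood $N - \sum_j u_j$ of $0$; and any subgroup of $V \cong \R^{\dim V}$ that contains a neighborhood of $0$ is all of $V$. Hence $U = V$, as desired. The main obstacle is the identity $d\rho(\mathfrak{l}) V = V$, which rests precisely on combining complete reducibility with the no-fixed-vectors hypothesis: dropping either would allow a trivial summand of $V$ to evade being in $d\rho(\mathfrak{l}) V$. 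A secondary subtlety is that the statement is written for a general path-connected topological group $L$, while the differential argument needs Lie structure; the general case reduces to the Lie group setting by replacing $L$ with $\overline{\rho(L)} \subseteq \mathrm{GL}(V)$, which is a connected Lie group by the closed subgroup theorem.
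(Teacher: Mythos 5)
Your argument is correct in its core idea and takes a genuinely different route from the paper's. The paper's proof is more "qualitative": it considers the path-component $U^{p}$ of the identity in $U$, invokes Goto's theorem (on arcwise connected subgroups of Lie groups) to conclude $U^{p}$ is a vector subspace, hence an $L$-submodule, takes a complementary submodule $W$ by complete reducibility, and observes that $U\cap W$ is path-discrete and therefore (by path-connectedness of $L$) pointwise fixed, hence zero; this gives $U=U^{p}$. Your submersion argument instead reduces to the case $U$ spans $V$ and shows directly that the orbit-sum map $\sigma$ is an open map at the identity, forcing $U$ to contain a neighbourhood of the origin. The key computation $d\rho(\mathfrak{l})V=V$ is correct, and plays the same role here that the no-fixed-vector hypothesis plays in the paper's disposal of $U\cap W$. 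Your route is arguably more illuminating about \emph{why} $U$ must be everything.

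However, there is a genuine (though localised) gap in the last step, namely the reduction from a general path-connected $L$ to a Lie group. You propose replacing $L$ by $\overline{\rho(L)}$, justified by Cartan's closed subgroup theorem. But $U$ is only assumed $\rho(L)$-invariant, and since $U$ is not assumed closed, it need not be invariant under the closure $\overline{\rho(L)}$. Consequently, once you replace $L$ by $\overline{\rho(L)}$, the claim that the image of $\sigma$ lies in $U$ is no longer justified; at best you would conclude that $\overline{U}$ contains a neighbourhood of $\sum_{j}u_{j}$, which only yields that $U$ is dense, not that $U=V$. The correct reduction, which the paper itself uses for the analogous step with $U^{p}$, is Goto's theorem \cite{Got69}: the path-connected subgroup $\rho(L)\subseteq\mathrm{GL}(V)$ is itself a connected Lie subgroup (with its intrinsic manifold topology, possibly finer than the subspace topology). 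One then runs your submersion argument with $\rho(L)$ and its Lie algebra $\mathfrak{l}_{0}$ in place of $\overline{\rho(L)}$; the inclusion $\rho(L)\hookrightarrow\mathrm{GL}(V)$ being a smooth immersion, $\sigma:\rho(L)^{n}\to V$ is smooth, its image lies in $U$, and your computation shows $\mathfrak{l}_{0}V=V$ (the complete reducibility and no-fixed-vector hypotheses are equivalent for $L$, $\rho(L)$, and $\overline{\rho(L)}$ since subspaces and fixed-point sets are closed). With that substitution the proof is complete.
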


\begin{proof}
First, denote by $U^{p}$ the path-connected component of the identity
of $U$. Recall that $U^{p}$ is a characteristic subgroup of $U$.
Since $U^{p}$ is path-connected, it is a connected Lie subgroup of
$V$ (see, e.g., \cite{Got69}), and hence a vector subspace. Since
vector subspaces are always closed (in finite dimension), we get that
$U^{p}$ is a closed subgroup. Since $U$ is $L$-invariant and $U^{p}$
is characteristic in $U$, we get that $U^{p}$ is $L$-invariant;
in other words, $U^{p}$ is a submodule. We will show that $U=U^{p}$.

Since $\rho$ is a completely reducible representation, there is a
submodule $W$ complementing $U^{p}$, so that $V=U^{p}\oplus W$
as $L$-modules. Since both $U$ and $W$ are $L$-invariant, the
intersection $U\cap W$ is $L$-invariant as well. The path-component
of the identity of $U\cap W$ is $\left\{ 0\right\} $, so all the
path-components of $U\cap W$ are singletons; since $L$ is path-connected,
it follows $L$ acts trivially on $U\cap W$. By assumption, the action
of $L$ on $V$ does not admit nonzero fixed vectors; therefore, $U\cap W=\left\{ 0\right\} $.
Since $U$ contains $U^{p}$ and $V=U^{p}\oplus W$, we get that $U=U^{p}\oplus(U\cap W)=U^{p}$,
as needed.
\end{proof}
\begin{lem}
\label{lem:submodule-case}Let $G=L\ltimes_{\rho}V$ be an Abels--Noskov
group, and let $f:G\to H$ be an open epimorphism such that $\ker f$
is contained in $V$. If $h_{1},\dots,h_{n}\in H$ generate $H$ and
$n\geqslant\tr G$ then there are lifts $g_{1},\dots,g_{n}\in G$
that generate $G$.
\end{lem}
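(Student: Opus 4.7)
My plan is to apply the algebraic characterisation of generating tuples from \thmref{gen-of-AN} on both sides of $f$ and use complete reducibility of $V$ to realise the required correction inside a well-chosen submodule of $\ker f$.

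Since $\ker f$ is normal in $G$, it is $L$-invariant, and since it lies in $V$, the preceding lemma (applied to the path-connected $L$) shows that $W := \ker f$ is an $\mathbb{R}[\rho]$-submodule of $V$; hence $H \cong L \ltimes_{\bar\rho}(V/W)$. Write $h_i = (\ell_i,\bar v_i)$ and fix arbitrary lifts $v_i \in V$. By \thmref{gen-of-AN} applied to $H$, $\ell_1,\dots,\ell_n$ generate $L$ and $\alpha^H_{\bar{\boldsymbol v}}(\ker\varepsilon^H_{\boldsymbol\ell}) = V/W$. Using \cite[Lemma~C.4]{AN24}, the map $\varepsilon^G_{\boldsymbol\ell}$ is surjective; since $\mathbb{R}[\rho]$ is semisimple, $K := \ker\varepsilon^G_{\boldsymbol\ell}$ splits off $\mathbb{R}[\rho]^n$, and Krull--Schmidt gives $K \cong \mathbb{R}[\rho]^{n-1}$. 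Applying \cite[Lemma~C.4]{AN24} a second time to the ideal $\ker\phi \subseteq \mathbb{R}[\rho]$, where $\phi : \mathbb{R}[\rho] \twoheadrightarrow \mathbb{R}[\bar\rho]$ is the induced algebra quotient, the component-wise map $\phi^n$ sends $K$ onto $\ker\varepsilon^H_{\boldsymbol\ell}$. Combined with the generation assumption in $H$, this shows that $M := \alpha^G_{\boldsymbol v}(K)$ is a submodule of $V$ with $M + W = V$.

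By complete reducibility of $W$, pick a submodule $N' \subseteq W$ with $W = (M \cap W) \oplus N'$, so that $V = M \oplus N'$. By \thmref{gen-of-AN} it now suffices to find $\delta_i \in N'$ with $\alpha^G_{\boldsymbol\delta}(K) = N'$; then $\alpha^G_{\boldsymbol v + \boldsymbol\delta}(K) = M + N' = V$, and $(\ell_i,v_i+\delta_i)$ both generate $G$ and still lift $h_i$ (since $\delta_i \in W$).

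To produce such $\delta_i$, observe that $L \ltimes N'$ is itself an Abels--Noskov group, and that the condition $\alpha^G_{\boldsymbol\delta}(K) = N'$ is equivalent, via \thmref{gen-of-AN} for $L \ltimes N'$, to $(\ell_i,\delta_i)$ generating $L \ltimes N'$. Tuples $\boldsymbol\delta \in (N')^n$ correspond bijectively to $\mathbb{R}[\rho|_{N'}]$-module maps $\mathbb{R}[\rho|_{N'}]^n \to N'$, and via the analogous splitting $\mathbb{R}[\rho|_{N'}]^n = \ker\varepsilon^{L\ltimes N'}_{\boldsymbol\ell} \oplus \mathbb{R}[\rho|_{N'}]$, every module map from $\ker\varepsilon^{L\ltimes N'}_{\boldsymbol\ell} \cong \mathbb{R}[\rho|_{N'}]^{n-1}$ to $N'$ arises as a restriction of some $\alpha_{\boldsymbol\delta}$. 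A surjective such map exists because
\[
\mathrm{d}_{\mathbb{R}[\rho|_{N'}]}(N') \leq \mathrm{d}_{\mathbb{R}[\rho]}(V) \leq n - 1,
\]
where the first inequality follows from $m_\sigma(N') \leq m_\sigma(V)$ and \corref{gen-module}, and the second from \thmref{AN-tr} together with $n \geq \tr G$. The main obstacle is the bookkeeping between the algebras $\mathbb{R}[\rho]$, $\mathbb{R}[\bar\rho]$ and $\mathbb{R}[\rho|_{N'}]$ and their corresponding kernel modules; once these identifications are in place, the remaining step is a formal consequence of working over a semisimple algebra.
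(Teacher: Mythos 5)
Your overall strategy matches the paper's: show $\ker f$ is a submodule, apply the Abels--Noskov criterion (\thmref{gen-of-AN}) on both sides, exploit complete reducibility and semisimplicity of $\mathbb{R}[\rho]$, and close with a multiplicity count using \corref{gen-module} and \thmref{AN-tr}. However, there is a genuine gap at the central "combine" step. You claim that if $\alpha^G_{\boldsymbol v}(K)=M$, $\alpha^G_{\boldsymbol\delta}(K)=N'$ and $V=M\oplus N'$, then $\alpha^G_{\boldsymbol v+\boldsymbol\delta}(K)=M+N'=V$. This implication is false in general: the map $a\mapsto(\alpha_{\boldsymbol v}(a),\alpha_{\boldsymbol\delta}(a))$ can easily fail to be surjective onto $M\oplus N'$ even when each coordinate map is surjective (e.g.\ if $\alpha_{\boldsymbol\delta}$ is, up to an isomorphism $M\cong N'$, proportional to $\alpha_{\boldsymbol v}$ on a common complement of the two kernels, the image is a diagonal). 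Producing \emph{some} $\boldsymbol\delta$ with $\alpha_{\boldsymbol\delta}(K)=N'$ is therefore not enough.

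The missing idea is to choose the correcting map so that it is supported on $\ker(\alpha_{\boldsymbol v}\!\restriction_K)$: split $K=\ker(\alpha_{\boldsymbol v}\!\restriction_K)\oplus M''$ (possible by semisimplicity), pick a surjection $\beta:\ker(\alpha_{\boldsymbol v}\!\restriction_K)\twoheadrightarrow N'$, and extend by zero on $M''$. Then the combined map is block‑triangular and surjects onto $M\oplus N'$. The existence of such a $\beta$ requires $m_\sigma(\ker(\alpha_{\boldsymbol v}\!\restriction_K))=m_\sigma(K)-m_\sigma(M)\geqslant m_\sigma(N')$ for each $\sigma$, i.e.\ $(n-1)\dim_{k_\sigma}\sigma\geqslant m_\sigma(V)$, which is precisely what $n\geqslant\tr G\geqslant\mathrm{d}_{\mathbb{R}[\rho]}(V)+1$ gives. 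Your bound $\mathrm{d}_{\mathbb{R}[\rho|_{N'}]}(N')\leqslant n-1$ only yields a surjection from all of $K\cong\mathbb{R}[\rho]^{n-1}$ onto $N'$, which is the weaker and insufficient condition; the paper's argument instead directly bounds the multiplicity of $Q=\ker(\bar\alpha_{\boldsymbol w}\!\restriction_P)$ and defines the corrector as $\alpha(q,w)=\beta(q)$, making the block structure explicit. Once you route the surjection through $\ker(\alpha_{\boldsymbol v}\!\restriction_K)$ rather than through $K$, your proof closes and is essentially the paper's.
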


\begin{proof}
Write $U=\ker f$; since it is a normal subgroup of $G$ contained
in $V$, it is $L$-invariant, and hence an $L$-submodule of $V$
by the previous lemma. Therefore, there is an $L$-submodule $W\subseteq V$
such that $V=W\oplus U$, and $H$ is naturally isomorphic to $L\ltimes_{\rho|_{W}}W$. 

In light of \lemref{RedToIsotypic}, we may assume $\rho$ is an isotypic
representation of type $\sigma$. As usual, we denote by $\mathbb{R}[\rho]$
the subalgebra of $\mathrm{End}(V)$ generated by $\rho(L)$. It is
isomorphic to $\mathbb{R}[\rho|_{W}]$, the subalgebra of $\mathrm{End}(W)$
generated by $\rho|_{W}(L)$ (except in the case $U=V$, in which
case everything below works with minor changes). Writing $h_{i}=(\ell_{i},w_{i})$
for $i=1,\dots,n$, we denote by $\varepsilon_{\boldsymbol{\ell}}:\mathbb{R}[\rho]^{n}\to\mathbb{R}[\rho]$
the map 
\[
\varepsilon_{\boldsymbol{\ell}}\left(a_{1},\dots,a_{n}\right)=\sum_{i=1}^{n}a_{i}(\bbm 1-\rho(\ell_{i})),
\]
as above. Abusing notations somewhat, we identify $\mathbb{R}[\rho]$
with $\mathbb{R}[\rho|_{W}]$ and denote by $\varepsilon_{\boldsymbol{\ell}}$
also the map $\mathbb{R}[\rho|_{W}]^{n}\to\mathbb{R}[\rho|_{W}]$
defined by
\[
\varepsilon_{\boldsymbol{\ell}}\left(a_{1},\dots,a_{n}\right)=\sum_{i=1}^{n}a_{i}(\bbm 1-\rho|_{W}(\ell_{i})).
\]
We write $P=\ker\varepsilon_{\boldsymbol{\ell}}$. We denote by $\alpha_{\boldsymbol{w}}:\mathbb{R}[\rho]^{n}\to W$
the map 
\[
\alpha_{\boldsymbol{w}}\left(a_{1},\dots,a_{n}\right)=\sum_{i=1}^{n}a_{i}w_{i}.
\]
The fact $h_{1},\dots,h_{n}$ generate $H$ means that $\alpha_{\boldsymbol{w}}(P)=W$. 

We need to find lifts $g_{1},\dots,g_{n}\in G$ that generate $G$.
This means finding $u_{1},\dots,u_{n}\in U$ such that $(\ell_{1},w_{1}\oplus u_{1}),\dots,(\ell_{n},w_{n}\oplus u_{n})$
generate $G$. For $\boldsymbol{u}=(u_{1},\dots,u_{n})\in U^{n}$,
we denote by $\alpha_{\boldsymbol{u}}:\mathbb{R}[\rho]^{n}\to U$
the map 
\[
\alpha_{\boldsymbol{u}}\left(a_{1},\dots,a_{n}\right)=\sum_{i=1}^{n}a_{i}u_{i}.
\]
We denote by $\alpha_{\boldsymbol{w}\oplus\boldsymbol{u}}:\mathbb{R}[\rho]^{n}\to V$
the map 
\[
\alpha_{\boldsymbol{w}\oplus\boldsymbol{u}}\left(a_{1},\dots,a_{n}\right)=\sum_{i=1}^{n}a_{i}(w_{i}\oplus u_{i}),
\]
so that $\alpha_{\boldsymbol{w}\oplus\boldsymbol{u}}=\alpha_{\boldsymbol{w}}\oplus\alpha_{\boldsymbol{u}}$.
Therefore, we need to find $\boldsymbol{u}=(u_{1},\dots,u_{n})\in U^{n}$
such that $\left(\alpha_{\boldsymbol{w}}\oplus\alpha_{\boldsymbol{u}}\right)(P)=V$.
As noted above, every map $\alpha:\mathbb{R}[\rho]^{n}\to U$ is of
the form $\alpha_{\boldsymbol{u}}$ for some $\boldsymbol{u}=(u_{1},\dots,u_{n})\in U^{n}$
(namely, $\boldsymbol{u}=(\alpha(e_{1}),\dots,\alpha(e_{n}))$), so
we actually just need to find a map $\alpha:\mathbb{R}[\rho]^{n}\to U$
such that $\left(\alpha_{\boldsymbol{w}}\oplus\alpha\right)(P)=V$.
Since $\mathbb{R}[\rho]^{n}\cong P\oplus\mathbb{R}[\rho]$, this is
the same thing as finding a map $\alpha:P\to U$ such that $\left(\bar{\alpha}_{\boldsymbol{w}}\oplus\alpha\right)(P)=V$,
where $\bar{\alpha}_{\boldsymbol{w}}$ is the restriction of $\alpha_{\boldsymbol{w}}$
to $P$.

Write $Q=\ker\bar{\alpha}_{\boldsymbol{w}}$. Since $P$ is completely
reducible, we get that $P\cong Q\oplus W$. Observe that there is
a surjective $\mathbb{R}[\rho]$-map $\beta:Q\to U$ if (and only
if) $m(Q)\geqslant m(U)$. Recall that 
\[
m(\mathbb{R}[\rho])=\dim_{k}\sigma,
\]
where $k$ is the corresponding Schur field (i.e., the centraliser
of $\mathbb{R}[\rho]$). Thus, 
\[
m(P)=(n-1)\cdot m(\mathbb{R}[\rho])=(n-1)\dim_{k}\sigma.
\]
So we know that
\[
m(Q)=m(P)-m(W)=(n-1)\dim_{k}\sigma-m(W).
\]
So what we need is that  
\[
(n-1)\dim_{k}\sigma\geqslant m(W)+m(U)=m(V).
\]
That is, we need
\[
n\geqslant\frac{m(V)}{\dim_{k}\sigma}+1.
\]
Recall that $d_{\mathbb{R}[\rho]}(V)=\left\lceil \frac{m(V)}{\dim_{k}\sigma}\right\rceil $
(\lemref{gen-module}) and $\tr G\geqslant d_{\mathbb{R}[\rho]}(V)+1$
(\thmref{AN-tr}). Thus, 
\[
n\geqslant\tr G\geqslant d_{\mathbb{R}[\rho]}(V)+1=\left\lceil \frac{m(V)}{\dim_{k}\sigma}\right\rceil +1,
\]
so we're in the clear. This means there is such a $\beta$, and we
can define $\alpha:Q\oplus W\to U$ by $\alpha(q,w)=\beta(q)$. Observe
that, with the identification $P\cong Q\oplus W$, the map $\bar{\alpha}_{\boldsymbol{w}}:P\to W$
is the projection onto the second coordinate (it was, after all, exactly
the map $\bar{\alpha}_{\boldsymbol{w}}$ that gave us this isomorphism).
Therefore, $\left(\bar{\alpha}_{\boldsymbol{w}}\oplus\alpha\right)(W)=W$
and $\left(\bar{\alpha}_{\boldsymbol{w}}\oplus\alpha\right)(Q)=U$,
so $\left(\bar{\alpha}_{\boldsymbol{w}}\oplus\alpha\right)(P)=V$,
as needed.
\end{proof}

\subsection{The Case \texorpdfstring{$\ker f\cap V$}{the Kernel} is Trivial}
\begin{lem}
\label{lem:NormalOfSDP}Let $G=L\ltimes_{\rho}V$ be an Abels--Noskov
group, and let $K\trianglelefteqslant G$ be a not-necessarily-closed
normal subgroup of $G$ such that $K\cap V$ is trivial. Then $K\leqslant\ker\rho\leqslant L$
.
\end{lem}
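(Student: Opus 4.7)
The plan is to use the normality of $K$ in $G$ in two stages: first to trap $K$ inside $\ker\rho\times V$, and then to eliminate the $V$-component using the structure of $L = S\times A$ together with the no-fixed-vectors hypothesis. To begin, since both $V$ and $K$ are normal in $G$, the commutator $[V,K]$ lies in $V\cap K = 0$, so $V$ centralises $K$. A direct computation of $C_G(V)$ in the semidirect product yields $C_G(V) = \ker\rho\cdot V$, and because $\ker\rho$ acts trivially on $V$ this is internally a direct product $\ker\rho\times V\subseteq G$. Hence $K\subseteq \ker\rho\times V$. The assumption $K\cap V = 0$ forces the first-coordinate projection $K\to \ker\rho$ to be injective, and writing $K' \coloneqq \pi_L(K)\subseteq \ker\rho$ (which is normal in $L$), we can express $K$ as the graph of a group homomorphism $\phi\colon K'\to V$, i.e.\ $K = \{(\ell,\phi(\ell)):\ell\in K'\}$. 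Tracking how $(\ell',0)\in L$ conjugates $K$ shows that normality of $K$ in $G$ is equivalent to the $L$-equivariance relation $\phi(\ell'\ell\ell'^{-1}) = \rho(\ell')\phi(\ell)$ for all $\ell\in K'$ and $\ell'\in L$. The remaining task is to deduce $\phi \equiv 0$.

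To do this, I would show $\phi(\ell)\in V^L = 0$ for every $\ell\in K'$, using two complementary sources of centralising elements. First, since $A\subseteq Z(L)$, the equivariance relation with $\ell' = a\in A$ gives $\rho(a)\phi(\ell) = \phi(\ell)$, whence $\phi(\ell)\in V^A$. Second, decompose $S$ as an almost direct product $S = S_{\mathrm{triv}}\cdot S_{\mathrm{faith}}$ of its simple factors, grouping into $S_{\mathrm{triv}}$ those on which $\rho$ vanishes and into $S_{\mathrm{faith}}$ the rest, so $\rho|_{S_{\mathrm{faith}}}$ has (finite, necessarily central) kernel. For $\ell = (s_{\mathrm{triv}} s_{\mathrm{faith}},a)\in K'\subseteq \ker\rho$ the identity $\rho(\ell) = 1$ reduces to $\rho(s_{\mathrm{faith}}) = \rho(a)^{-1}$; since $\rho(A)$ commutes with $\rho(S_{\mathrm{faith}})$ inside $\mathrm{GL}(V)$, the element $\rho(s_{\mathrm{faith}})$ lies in $Z(\rho(S_{\mathrm{faith}}))$. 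A standard argument (a continuous commutator map from a connected group into a discrete one is constant) shows that a surjective homomorphism of connected semisimple Lie groups with finite centre pulls the centre back to the centre, whence $s_{\mathrm{faith}}\in Z(S_{\mathrm{faith}})$. Consequently every $s'\in S_{\mathrm{faith}}$ centralises $\ell$ (it commutes with $s_{\mathrm{triv}}$ by the factor decomposition, with $s_{\mathrm{faith}}$ by centrality, and with $a$ because $S$ and $A$ commute in $L$), and the equivariance relation then forces $\phi(\ell)\in V^{S_{\mathrm{faith}}}$.

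Combining both containments and using $V = V^{S_{\mathrm{triv}}}$ (since $S_{\mathrm{triv}}\subseteq \ker\rho$ acts trivially on $V$) yields
\[
\phi(\ell) \in V^A \cap V^{S_{\mathrm{faith}}} = V^A \cap V^S = V^L,
\]
which vanishes by the defining no-fixed-vectors property of an Abels--Noskov group. Hence $\phi\equiv 0$, so $K = K'\times\{0\}\subseteq \ker\rho$. The most delicate point I expect to negotiate is the structural claim $s_{\mathrm{faith}}\in Z(S_{\mathrm{faith}})$: it requires both the commutativity of $\rho(A)$ and $\rho(S_{\mathrm{faith}})$ inside $\mathrm{GL}(V)$ (which encodes the direct-product structure of $L = S\times A$) and the preimage-of-centre lemma for surjections of connected semisimple Lie groups with finite centre. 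The fact that the decomposition $S = S_{\mathrm{triv}}\cdot S_{\mathrm{faith}}$ is only almost direct creates no difficulty, since the finite central ambiguity in $s_{\mathrm{faith}}$ lies inside $Z(S_{\mathrm{faith}})$ itself.
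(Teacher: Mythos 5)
Your proof is correct, and it takes a somewhat different route from the paper's, the difference lying in which almost-direct-product decomposition of $S$ drives the argument. Both proofs open with the same observation that $[K,V]\subseteq K\cap V=\{0\}$ forces $\pi_L(K)\subseteq\ker\rho$, and both finish by exhibiting a subgroup of $L$ that both generates $L$ and fixes the $V$-component of elements of $K$. The paper takes the decomposition \emph{adapted to} $K$: it lets $H_0$ be the product of simple factors of $S$ \emph{not} contained in $\overline{\pi_L(K)}$, so that $H\coloneqq H_0A$ commutes with $\overline{\pi_L(K)}$ by construction, and the commutator $[k,h]\in K\cap V$ immediately gives $v_k\in V^H$ for $h\in H$, with $v_k\in V^{\overline{\pi_L(K)}}$ free because $\overline{\pi_L(K)}\subseteq\ker\rho$. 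You instead take the decomposition adapted to $\rho$ alone, splitting $S=S_{\mathrm{triv}}\cdot S_{\mathrm{faith}}$. The price is that $S_{\mathrm{faith}}$ does not automatically commute with $K'$, so you need the extra step showing the $S_{\mathrm{faith}}$-component of $\ell\in K'\subseteq\ker\rho$ is central in $S_{\mathrm{faith}}$; this you handle correctly via the ``continuous commutator into a discrete kernel is constant'' argument. One small imprecision in the statement of that auxiliary lemma: the preimage-of-centre fact holds for surjections with \emph{discrete kernel}, not merely for surjections of semisimple groups with finite centre (for a projection $S_1\times S_2\twoheadrightarrow S_1$ the conclusion fails); your application is nonetheless valid because you correctly verify that $\ker(\rho|_{S_{\mathrm{faith}}})$ is finite. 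Your graph-of-a-homomorphism reformulation of $K\subseteq\ker\rho\times V$ and the resulting equivariance identity $\phi(\ell'\ell\ell'^{-1})=\rho(\ell')\phi(\ell)$ is an elegant repackaging of the paper's commutator computations, and it does make the $A$-invariance of $\phi(\ell)$ fall out instantly; the trade-off is the additional centre-pullback lemma, which the paper avoids by tailoring the complement to $K$ in advance.
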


\begin{proof}
Let $\pi:G\to L$ denote the quotient map. Since $V$ is abelian,
we have, for any $k\in K$ and $v\in V$, 
\[
[k,v]=[\pi(k),v]=\rho(\pi(k))v-v.
\]
Moreover, $[k,v]\in K\cap V$, since both $K$ and $V$ are normal.
As $K\cap V$ is trivial, it follows that $K$ and $\pi(K)$ act trivially
on $V$ by conjugation; since $\pi(K)$ is contained in $L$ (and
the action by conjugation is given by $\rho$), we have that $\pi(K)\leqslant\ker\rho$. 

As $K$ is normal in $G$, $\overline{\pi(K)}$ is normal in $L$.
Any connected simple normal subgroup of $S$ is either contained
in $\overline{\pi(K)}$ or else it commutes with $\overline{\pi(K)}$.
Let $H_{0}$ be the product all connected simple normal subgroups
of $S$ which are not contained in $\overline{\pi(K)}$, and set $H=H_{0}A$.
Then $\overline{\pi(K)}H=L$ and $[\overline{\pi(K)},H]=\{1\}$. Now,
let $k\in K$. There is some $v_{k}\in V$ such that $k=\pi(k)v_{k}$.
We have seen that $\pi(K)$ commutes with $H$ as well as with $V$.
Hence for any $h\in H$ we have that 
\[
[k,h]=[\pi(k)v_{k},h]=[v_{k},h]=v_{k}-\rho(h)v_{k}.
\]
On the other hand, $[k,h]\in K$ (since $K$ is normal) and $[v_{k},h]\in V$
(since $V$ is normal), so this element belongs to $K\cap V$ and
is thus trivial. We see that $v_{k}$ is fixed by all of $H$. We
know that $\overline{\pi(K)}$ acts trivially on all of $V$, so $v_{k}$
is fixed by $H$ as well as by $\overline{\pi(K)}$, hence by all
of $L$. But by definition of Abels--Noskov groups, the only vector
in $V$ fixed by $L$ is the zero vector. This means that $v_{k}=0$,
and so $k=\pi(k)\in\ker\rho\leqslant L$.
\end{proof}
We are now ready to complete the case $\ker f\cap V$ is trivial.
Observe that, since $\rho$ admits no non-zero fixed vectors and $S$
is perfect, a direct computation shows that the commutator subgroup
of $(S\times A)\ltimes V$ is $S\ltimes V$.
\begin{lem}
\label{lem:trivial-case}Let $G=(S\times A)\ltimes_{\rho}V$ be an
Abels--Noskov group, and let $f:G\to H$ be an open epimorphism
such that $\ker f\cap V$ is trivial. Let $f^{\mathrm{ab}}:A\to H/f(S\ltimes V)$
be the map induced by $f$. If $h_{1},\dots,h_{n}\in H$ are generators
with $n\geqslant\tr G$, then they admit generating lifts to $G$
via $f$ if and only if $h_{1}f(S\ltimes V),\dots,h_{n}f(S\ltimes V)$
admit generating lifts to $A$ via $f^{\mathrm{ab}}$. If $H$ is
Hausdorff, then so is $H/f(S\ltimes V)$.
\end{lem}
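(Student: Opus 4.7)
The plan is to identify $H$ itself as a semidirect product and reduce to Proposition \ref{prop:Gas-Red}. By Lemma \ref{lem:NormalOfSDP}, the hypothesis $\ker f\cap V=\{1\}$ forces $K:=\ker f\subseteq\ker\rho\subseteq L=S\times A$. Since $\rho$ therefore factors through $\bar L := L/K$ to a representation $\bar\rho$, the assignment $(\ell,v)\mapsto (\ell K, v)$ is an open surjective homomorphism $G = L\ltimes_\rho V\to\bar L\ltimes_{\bar\rho} V$ with kernel $K$. Combined with $G/K\cong H$ this gives a topological isomorphism $H\cong\bar L\ltimes_{\bar\rho}V$. Under this identification every $h_i$ has the form $(\bar\ell_i,v_i)$, and every lift $g_i\in G$ is of the form $(\ell_i,v_i)$ with $\ell_i\in L$ lifting $\bar\ell_i$. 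The forward direction is then immediate: if the $g_i$ generate $G$, their projections to the abelianisation $A = G/(S\ltimes V)$ are generators and are lifts of $h_i f(S\ltimes V)$ via $f^{\mathrm{ab}}$.

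For the converse, I would use Theorem \ref{thm:gen-of-AN} on the Abels--Noskov group $G$: the lifts $g_i=(\ell_i,v_i)$ generate $G$ iff (i) $\ell_1,\dots,\ell_n$ generate $L$, and (ii) $\alpha_{\boldsymbol v}(\ker\varepsilon_{\boldsymbol\ell})=V$. Condition (ii) depends on $\ell_i$ only through $\rho(\ell_i)=\bar\rho(\bar\ell_i)$, so it is a condition on the $\bar\ell_i$ and $v_i$ alone, and it is forced by the $h_i$ generating $H$ via a Fox-derivative/commutator argument: a standard computation in $\bar L\ltimes V$ shows $\langle h_1,\dots,h_n\rangle\cap V\subseteq\alpha_{\boldsymbol v}(\ker\varepsilon_{\bar{\boldsymbol\ell}})=:U$; if $U\subsetneq V$ one writes $V=U\oplus W$ with $W\ne 0$ a submodule, passes to $H/U\cong\bar L\ltimes W$, and observes that the resulting dense subgroup there meets $W$ trivially, so commutators $[g,w]=(1,(1-\bar\rho(\bar\ell))w)$ are forced to vanish, making the dense subgroup abelian and contradicting the fact that $\bar L\ltimes W$ is non-abelian (since $\bar\rho$ has no fixed vectors on $W\ne 0$). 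Therefore (ii) is automatic, and the problem reduces to lifting $\bar\ell_1,\dots,\bar\ell_n$ to generators of $L$.

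Since $n\ge\tr G\ge\tr L$ by Theorem \ref{thm:AN-tr}, Proposition \ref{prop:Gas-Red} applies to the open epimorphism $L=S\times A\to\bar L$ and yields that such lifts exist iff the images $\bar\ell_i\bar f(S)\in\bar L/\bar f(S)$ admit generating lifts in $A$ via the induced map. The canonical isomorphism $H/f(S\ltimes V)\cong\bar L/\bar f(S)$, following from the semidirect product structure together with the injectivity of $f$ on $V$, identifies this induced map with $f^{\mathrm{ab}}$ and the cosets $\bar\ell_i\bar f(S)$ with $h_i f(S\ltimes V)$, completing the equivalence. For the last sentence, if $H$ is Hausdorff then $K$ is closed in $L$, so $\bar L$ is a Hausdorff Lie group; as $S$ has finite centre, the Hausdorff clause of Proposition \ref{prop:Gas-Red} gives that $H/f(S\ltimes V)\cong\bar L/\bar f(S)$ is Hausdorff. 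The main technical point I anticipate is the Fox-derivative verification of (ii); the rest is structural bookkeeping around the identification $H\cong\bar L\ltimes V$.
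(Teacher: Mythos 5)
Your structural outline matches the paper's proof closely: reduce via Lemma \ref{lem:NormalOfSDP} to $\ker f\subseteq L=S\times A$, recognise that condition (ii) of Theorem \ref{thm:gen-of-AN} depends on the $\ell_i$ only through $\rho(\ell_i)$, and feed the $L$-component into Proposition \ref{prop:Gas-Red}. The Hausdorff clause and the inequality $n\geqslant\tr L$ are also handled correctly.

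The gap is in the verification of condition (ii). You cannot simply cite Theorem \ref{thm:gen-of-AN} for $H\cong\bar L\ltimes_{\bar\rho}V$, because when $K=\ker f$ is not closed, $\bar L=L/K$ is not Hausdorff and $H$ is not an Abels--Noskov group in the paper's sense. You acknowledge this implicitly and substitute a direct ``Fox-derivative/commutator argument,'' but as written it does not close. After passing to $H/U\cong\bar L\ltimes W$, you have a dense subgroup $\bar\Gamma$ meeting $W$ trivially, and you assert that commutators $[g,w]=(1,(1-\bar\rho(\bar\ell))w)$ are ``forced to vanish.'' But the $w$ you want to plug in is never an element of $\bar\Gamma$ (precisely because $\bar\Gamma\cap W=0$), so this commutator is not in $\bar\Gamma$ and no vanishing is forced; nor does it follow that $\bar\Gamma$ is abelian. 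A dense subgroup of $\bar L\ltimes W$ intersecting $W$ trivially simply corresponds to a $1$-cocycle $\Lambda\to W$ with dense graph, and ruling that out is essentially the content of Theorem \ref{thm:gen-of-AN} itself, which is circular to invoke.

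The paper's route avoids all of this: set $\overline{\Delta}=\overline{\ker f}$ and pass to $\bar G:=G/\overline{\Delta}\cong\bar L\ltimes_{\bar\rho}V$ with $\bar L=L/\overline{\Delta}$, which \emph{is} a Hausdorff Abels--Noskov group. Since $h_1,\dots,h_n$ generate $H$, their images $(\bar\ell_i,v_i)$ generate $\bar G$, and Theorem \ref{thm:gen-of-AN} applied to $\bar G$ gives $\alpha_{\boldsymbol v}(\ker\varepsilon_{\bar{\boldsymbol\ell}})=V$. Because $\overline{\Delta}$ acts trivially on $V$, one has $\mathbb{R}[\rho]=\mathbb{R}[\bar\rho]$ and hence $\varepsilon_{\boldsymbol\ell}=\varepsilon_{\bar{\boldsymbol\ell}}$ literally as maps, giving condition (ii) for $G$ without any new argument. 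The same observation shows $\varepsilon_{\boldsymbol{\ell\delta}}=\varepsilon_{\boldsymbol\ell}$ after correcting the $L$-components by $\delta_i\in\ker f$, which is what lets Proposition \ref{prop:Gas-Red} finish the job. You should replace your contradiction sketch by this passage to the Hausdorff quotient $G/\overline{\ker f}$.
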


\begin{proof}
It is clear that, if $h_{1},\dots,h_{n}$ admit generating lifts
via $f$, then $h_{1}f(S\ltimes V),\dots,h_{n}f(S\ltimes V)$ admit
generating lifts to $A$ via $f^{\mathrm{ab}}$. Thus, suppose $h_{1},\dots,h_{n}\in H$
are generators (with $n\geqslant\tr G$) such that $h_{1}f(S\ltimes V),\dots,h_{n}f(S\ltimes V)$
admit generating lifts to $A$ via $f^{\mathrm{ab}}$. Set $L=S\times A$
and let $(\ell_{1},v_{1}),\dots,(\ell_{n},v_{n})\in L\ltimes V$ be
arbitrary lifts of $h_{1},\dots,h_{n}$. Set $\Delta=\ker f$ and
\[
F=\overline{\left\langle (\ell_{1},v_{1}),\dots,(\ell_{n},v_{n})\right\rangle },
\]
so that $F\Delta$ is dense in $G$. What we need to show is that
there are $\delta_{1},\dots,\delta_{n}\in\Delta$ such that $(\ell_{1},v_{1})\delta_{1},\dots,(\ell_{n},v_{n})\delta_{n}$
generate $G$. 

By the previous lemma, we know $\Delta$ is contained in $L$ and
acts trivially on $V$. Set $\bar{L}=L/\overline{\Delta}$, so that
$G/\overline{\Delta}$ is naturally isomorphic with $\bar{G}\coloneqq\bar{L}\ltimes_{\bar{\rho}}V$,
where $\bar{\rho}:\bar{L}\to\mathrm{GL}(V)$ is the representation
induced by $\rho$. Then $\bar{G}$ too is Abels--Noskov. Let $\bar{\ell}_{i}$
be the image of $\ell_{i}$ in $\bar{L}$. By assumption, $(\bar{\ell}_{1},v_{1}),\dots,(\bar{\ell}_{n},v_{n})$
generate $\bar{G}$, so
\[
\alpha_{\boldsymbol{v}}(\ker\varepsilon_{\boldsymbol{\bar{\ell}}})=V
\]
by \thmref{gen-of-AN}, where $\alpha_{\boldsymbol{v}},\varepsilon_{\bar{\boldsymbol{\ell}}}$
are defined as usual.

Observe that, since $\overline{\Delta}$ acts trivially on $V$, we
have that $\rho(L)=\bar{\rho}(\bar{L})$, so that $\mathbb{R}[\rho]=\mathbb{R}[\bar{\rho}]$
(where this is a genuine equality of subsets of $\mathrm{End}_{\mathbb{R}}(V)$,
not just a natural isomorphism). This means that $\varepsilon_{\boldsymbol{\bar{\ell}}}=\varepsilon_{\boldsymbol{\ell}}$
and hence $\ker\varepsilon_{\boldsymbol{\bar{\ell}}}=\ker\varepsilon_{\boldsymbol{\ell}}$,
so that 
\[
\alpha_{\boldsymbol{v}}(\ker\varepsilon_{\boldsymbol{\ell}})=\alpha_{\boldsymbol{v}}(\ker\varepsilon_{\boldsymbol{\bar{\ell}}})=V.
\]

Observe that $f:L\ltimes V\to H$ and $L\longrightarrow\bar{L}$ give
rise to the same abelianisation map $f^{\mathrm{ab}}:A\to H/f(S\ltimes V)$.
Thus, by \propref{Gas-Red}, there are $\delta_{1},\dots,\delta_{n}$
such that $\ell_{1}\delta_{1},\dots,\ell_{n}\delta_{n}$ generate
$L$. Since $\Delta$ acts trivially on $V$, we have that $\varepsilon_{\boldsymbol{\ell\delta}}=\varepsilon_{\boldsymbol{\ell}}$.
Therefore, by \thmref{gen-of-AN}, the elements $(\ell_{1}\delta_{1},v_{1}),\dots,(\ell_{n}\delta_{n},v_{n})$
generate $G$, as needed.

If $H$ is Hausdorff, then $\ker f\subseteq L$ is closed, so $\ker f^{\mathrm{ab}}$
is closed as well (by \propref{Gas-Red}).
\end{proof}

\subsection{The General Case}
\begin{prop}
\label{prop:AN-case}Let $G=(S\times A)\ltimes_{\rho}V$ be an Abels--Noskov
group, and let $f:G\to H$ be an open epimorphism. Let $f^{\mathrm{ab}}:A\to H/f(S\ltimes V)$
be the induced epimorphism. If $h_{1},\dots,h_{n}\in H$ are generators
with $n\geqslant\tr G$, then they admit generating lifts to $G$
via $f$ if and only if $h_{1}f(S\ltimes V),\dots,h_{n}f(S\ltimes V)$
admit generating lifts to $A$ via $f^{\mathrm{ab}}$.

If $H$ is Hausdorff, then so is $H/f(S\ltimes V)$.
\end{prop}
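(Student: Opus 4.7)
The plan is to reduce to the two special cases already settled, namely \lemref{submodule-case} (where $\ker f\subseteq V$) and \lemref{trivial-case} (where $\ker f\cap V$ is trivial), by factoring $f$ through an intermediate Abels--Noskov group.

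The forward implication is immediate: if $g_{1},\dots,g_{n}\in G$ lift $h_{1},\dots,h_{n}$ and generate $G$, then their images under the quotient $G\to G/(S\ltimes V)\cong A$ are generating lifts of $h_{1}f(S\ltimes V),\dots,h_{n}f(S\ltimes V)$ via $f^{\mathrm{ab}}$.

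For the converse, set $U=\ker f\cap V$. Since both $\ker f$ and $V$ are normal in $G$, the subgroup $U$ is normal in $G$, hence $L$-invariant. Because $L=S\times A$ is connected (thus path-connected) and $\rho$ is completely reducible without nonzero fixed vectors, the unlabeled submodule lemma preceding \lemref{submodule-case} forces $U$ to be a closed $L$-submodule of $V$. Consequently $f$ factors as
\[
G \xrightarrow{\ \pi\ } G/U \xrightarrow{\ \tilde{f}\ } H,
\]
where $G/U\cong (S\times A)\ltimes_{\bar{\rho}}(V/U)$ is again Abels--Noskov (with $\bar{\rho}$ the induced representation), $\ker\pi=U\subseteq V$, and $\ker\tilde{f}=(\ker f)/U$ meets $V/U$ trivially.

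Now apply \lemref{trivial-case} to $\tilde{f}$. Since $\tilde{f}(S\ltimes V/U)=f(S\ltimes V)$, the induced abelianisation $\tilde{f}^{\mathrm{ab}}:A\to (G/U)/\tilde{f}(S\ltimes V/U)$ coincides with $f^{\mathrm{ab}}$, so the hypothesis transfers verbatim; and $n\geqslant \tr G\geqslant \tr(G/U)$. This yields generating lifts $\bar{g}_{1},\dots,\bar{g}_{n}\in G/U$ of $h_{1},\dots,h_{n}$ via $\tilde{f}$. Next apply \lemref{submodule-case} to $\pi:G\to G/U$, whose kernel $U$ is contained in $V$, with the generators $\bar{g}_{1},\dots,\bar{g}_{n}$ and the bound $n\geqslant \tr G$. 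We obtain generators $g_{1},\dots,g_{n}\in G$ lifting the $\bar{g}_{i}$ via $\pi$; then $f(g_{i})=\tilde{f}(\pi(g_{i}))=\tilde{f}(\bar{g}_{i})=h_{i}$, as required. The Hausdorffness of $H/f(S\ltimes V)$ when $H$ is Hausdorff follows from the corresponding clause in \lemref{trivial-case} applied to $\tilde{f}$. The main conceptual point is really just identifying the factorisation and verifying that $U$ is a submodule; once this is in place the proof is essentially bookkeeping glued together from the two prior lemmas.
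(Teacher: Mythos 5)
Your proof is correct and follows essentially the same route as the paper: both intersect the kernel with $V$ to get a submodule $U$, factor $f$ through $G/U$, and then apply \lemref{trivial-case} to $G/U\to H$ followed by \lemref{submodule-case} to $G\to G/U$. One small typo: the codomain of $\tilde f^{\mathrm{ab}}$ should be $H/\tilde f(S\ltimes V/U)$ rather than $(G/U)/\tilde f(S\ltimes V/U)$, but this does not affect the argument.
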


\begin{proof}
Let $h_{1},\dots,h_{n}\in H$ be generators such that $h_{1}f(S\ltimes V),\dots,h_{n}f(S\ltimes V)$
admit generating lifts to $A$ via $f^{\mathrm{ab}}$. Denote $\Delta=\ker f$.
Let $g_{1},\dots,g_{n}\in G$ be arbitrary lifts of $h_{1},\dots,h_{n}$,
so that $\left\langle g_{1},\dots,g_{n}\right\rangle \Delta$ is dense
in $G$.

The idea of the proof is very simple: we know how to do this for kernels
contained in $V$, and we know how to do this for kernels intersecting
$V$ trivially; so we first divide by the intersection with $V$,
which is a submodule and hence a closed subgroup, and then we divide
by the rest, which intersects the image of $V$ trivially. We now
spell this out in detail.

Consider $U\coloneqq\Delta\cap V$. It is a normal subgroup of $G$
contained in $V$, so it is a submodule of $V$. In particular, it
is closed. Therefore, $G/U$ is naturally isomorphic with $L\ltimes_{\overline{\rho}}W$,
$W\subseteq V$ is some submodule complementing $U$, i.e.~such that
$V=U\oplus W$, and $\bar{\rho}=\rho\res_{W}$. 

Since $\left\langle g_{1},\dots,g_{n}\right\rangle \Delta$ is dense
in $G$, we get that $\left\langle g_{1}U,\dots,g_{n}U\right\rangle \hat{\Delta}$
is dense in $G/U$, where $\hat{\Delta}$ is the image of $\Delta$
in $G/U$. Clearly, $\hat{\Delta}\cap W$ is trivial. Now, $G/U=L\ltimes_{\overline{\rho}}W$
is still an Abels--Noskov group, and its abelianisation is $A$.
Moreover, the map $\bar{f}:G/U\to(G/U)/\hat{\Delta}$ gives rise to
the same map $f^{\mathrm{ab}}:A\to H/f(S\ltimes V)$. Thus, since
$h_{1}f(S\ltimes V),\dots,h_{n}f(S\ltimes V)$ admit generating lifts
to $A$, we may use \lemref{trivial-case} and get that there are
$\bar{\varepsilon}_{1},\dots,\bar{\varepsilon}_{n}\in\hat{\Delta}$
such that $g_{1}U\cdot\bar{\varepsilon}_{1},\dots,g_{n}U\cdot\bar{\varepsilon}_{n}$
generate $G/U$. This means that there are $\varepsilon_{1},\dots,\varepsilon_{n}\in\Delta$
such that $g_{1}\varepsilon_{1}U,\dots,g_{n}\varepsilon_{n}U$ generate
$G/U$. Therefore, by \lemref{submodule-case} (since $n\geqslant\tr G\geqslant\mathrm{d}_{\mathbb{R}[\rho]}(V)+1$),
there are $u_{1},\dots,u_{n}\in U\subseteq\Delta$ such that $g_{1}\varepsilon_{1}u_{1},\dots,g_{n}\varepsilon_{n}u_{n}$
generate $G$, as needed.

If $H$ is Hausdorff, then $H/f(S\ltimes V)$ is Hausdorff as well
by \lemref{trivial-case} (applied to $G/U$, which is Hausdorff in
any case).
\end{proof}

\section{General Connected Lie Groups\label{sec:GeneralLie}}
\begin{thm}[{\cite[Corollary 5.8]{AN24}}]
Let $G$ be a connected Lie group. Then it admits a non-generating
subgroup $B\trianglelefteqslant G$, an Abels--Noskov group $L\ltimes_{\rho}V$
and a finite covering map $L\ltimes_{\rho}V\to G/B$.
\end{thm}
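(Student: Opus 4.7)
The plan is to construct $B$ as a maximal non-generating normal subgroup, obtained by systematically stripping away the ``inessential'' parts of the structure of $G$, and then to verify that the remaining quotient has the Abels--Noskov shape up to a finite cover. The starting point is the Levi decomposition $G=L\ltimes R$, where $R$ is the solvable radical and $L$ is a connected semisimple Levi subgroup. Inside $R$ sits the nilpotent radical $N$, and the solvable quotient $R/N$ is a connected abelian Lie group. The rough goal is to arrange for the data $(S,A,V)$ to come, respectively, from (a quotient of) $L$, from $R/N$, and from an abelianisation of $N$.

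First I would show that the closure of the commutator subgroup $\overline{[N,N]}$ is non-generating: inside a connected nilpotent normal subgroup of $G$, commutators are absorbed into the Frattini subgroup, and hence modding them out does not affect generation. This makes $N/\overline{[N,N]}$ a connected abelian Lie group. Next, the maximal compact (toral) part of this abelianisation should also be non-generating in $G$: it is a characteristic, hence normal, compact connected subgroup of $G/\overline{[N,N]}$, and such ``absorbable'' tori can be shown to be non-generating by a Kronecker-type argument. After factoring it out, what remains of $N$ is a real vector space $V$. The action of $L\ltimes(R/N)$ on $V$ is linear, and I would then quotient out any nonzero $L\ltimes(R/N)$-fixed subspace of $V$ (again a closed normal subgroup that is visibly non-generating) so that no non-trivial fixed vectors remain. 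Complete reducibility of the resulting representation $\rho$ then follows from averaging over a maximal compact subgroup of $L\ltimes(R/N)$, giving the representation-theoretic hypothesis required in \defref{abels-noskov}.

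At this stage, the quotient of $G$ by the cumulative non-generating subgroup $B$ produced above looks like $(L'\cdot A')\ltimes_\rho V$ with $L'$ semisimple and $A'$ abelian, but two issues remain: the semisimple factor $L'$ may have infinite centre, and $L'$ and $A'$ need to be made to commute so as to form a direct product $S\times A$. Both are handled by passing to a finite cover: one takes the simply connected cover of $L'$, divides by a discrete central subgroup to produce $S$ with finite centre, and uses that the action of $L'$ on the abelian radical decomposes completely to split off a connected abelian complement $A$ commuting with $S$. The map $(S\times A)\ltimes_\rho V\to G/B$ is finite-to-one because the discrepancy between $L'$ and $S$, and the identification of the abelian complement, are both governed by discrete (central) subgroups. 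The main obstacle is the careful bookkeeping of non-generation at each step, especially verifying that the successive compact, commutator, and fixed-vector subgroups really are non-generating in $G$ and not merely in intermediate quotients; this is where one relies most heavily on the Frattini analysis carried out in \cite{AN24}.
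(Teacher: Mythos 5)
This statement is imported as a black box from \cite[Corollary 5.8]{AN24}; the paper supplies no proof of its own, so there is nothing internal to compare against. Your reconstruction follows a sensible template (Levi decomposition, strip the nilradical, linearise, make the representation tame), but it contains genuine gaps, and the construction as you describe it would actually fail even for very simple $G$.

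The core error is in the accounting of what goes into $B$ versus into the abelian factor $A$. You repeatedly designate as ``non-generating'' precisely those pieces which must survive into $A$ and cannot be killed. First, the maximal torus of $N/\overline{[N,N]}$ is in general \emph{not} non-generating: take $G=\T^{2}$, where $N=G$, $\overline{[N,N]}=1$ and the torus is all of $\T^{2}$; but $\{1\}\cdot\T^{2}$ is dense while $\{1\}$ is not, so $\T^{2}$ is not non-generating in itself. Since $G$ is connected and $\mathrm{Aut}(\T^{k})$ is discrete, any such torus is central in $G$ and has to land inside $A$ in the Abels--Noskov form, not in $B$. Second, the $L\ltimes(R/N)$-fixed subspace of $V$ is likewise not non-generating: take $G=\R$, where this fixed subspace is all of $\R$, and $\{0\}\cdot\R$ is dense while $\{0\}$ is not. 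Again, fixed vectors must be pushed into $A$; this is exactly why \defref{abels-noskov} allows $A$ to be a nontrivial connected abelian Lie group rather than forcing all the abelian structure into $V$. Third, complete reducibility of $\rho$ does \emph{not} follow from averaging over a maximal compact subgroup of $L\ltimes(R/N)$: that group need not be compact, and averaging only yields invariance under the compact part. A concrete obstruction is the solvable Lie algebra spanned by $X,Y,Z$ with $[X,Y]=Y+Z$, $[X,Z]=Z$, $[Y,Z]=0$, where $\mathfrak{n}=\langle Y,Z\rangle$ and $\mathrm{ad}(X)|_{\mathfrak{n}}$ is a nontrivial Jordan block; no averaging splits that filtration. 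The correct mechanism (which is the real content of the Frattini analysis in \cite{AN24}) is to quotient further by the part of $V$ on which the action fails to be semisimple, and to verify that \emph{that} subgroup, not the torus or the fixed vectors, is the one sitting in the Frattini subgroup.

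So while the overall plan has the right shape, the specific choices of which subgroups to put into $B$ are wrong, and the complete-reducibility step is justified by an argument that does not work; the proposed construction breaks already on $G=\R$ and $G=\T^{2}$.
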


\begin{lem}
Let $G$ be a connected Lie group. Let $B$ be a non-generating subgroup
$B\trianglelefteqslant G$ such that there is an Abels--Noskov group
$(S\times A)\ltimes_{\rho}V$ and a finite covering map $(S\times A)\ltimes_{\rho}V\longrightarrow G/B$.
Then $(G/B)'$ is a closed subgroup, and the induced maps 
\[
G/\overline{G'}\longrightarrow(G/B)/(G/B)'\longleftarrow A
\]
are finite coverings. In particular, $A$ is isomorphic to $G/\overline{G'}$.
\end{lem}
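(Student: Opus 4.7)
The plan is to combine information from both sides of the diagram: from the finite cover $\pi:(S\times A)\ltimes V\to G/B$ I compute $(G/B)/(G/B)'$ explicitly, and from the quotient $\phi:G\to G/B$ I push down $\overline{G'}$. First I note that $((S\times A)\ltimes V)'=S\ltimes V$, which follows from a direct commutator calculation using that $S$ is perfect and $\rho$ has no non-zero fixed vectors (so that commutators of the form $[(\ell,0),(1,w)]=(1,(\rho(\ell)-\mathbf{1})w)$ span all of $V$, while $[S,S]=S$). Since $K=\ker\pi$ is finite and $S\ltimes V$ is closed, $\pi(S\ltimes V)$ is closed in $G/B$ (its preimage is the finite union $(S\ltimes V)\cdot K$), establishing that $(G/B)'=\pi(S\ltimes V)$ is closed. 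Quotienting then identifies
\[
(G/B)/(G/B)'\;\cong\;((S\times A)\ltimes V)/((S\ltimes V)\cdot K)\;\cong\;A/\bar K,
\]
where $\bar K$ is the image of $K$ under $(S\times A)\ltimes V\twoheadrightarrow A$, a finite subgroup. In particular, the map $A\to(G/B)/(G/B)'$ becomes the natural quotient $A\to A/\bar K$ by the finite subgroup $\bar K$, which is a finite covering.

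For the map from the other side, I first verify $\phi(\overline{G'})=(G/B)'$: continuity and closedness of $(G/B)'$ give $\phi(\overline{G'})\subseteq(G/B)'$, while $(G/B)'=\phi(G')\subseteq\phi(\overline{G'})$ is immediate. Thus the kernel of the composition $G\to G/B\to(G/B)/(G/B)'$ equals $\phi^{-1}(\phi(\overline{G'}))=\overline{G'}\cdot B$, yielding a continuous surjection $G/\overline{G'}\to(G/B)/(G/B)'$ whose kernel is canonically identified, via the second isomorphism theorem, with the image $\bar B$ of $B$ in $G/\overline{G'}$.

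The main obstacle is to show $\bar B$ is finite, and this argument proceeds in two steps. First, I transfer the non-generating property: given $\bar H\leqslant G/\overline{G'}$ with $\bar H\bar B$ dense, its preimage $H=p^{-1}(\bar H)$ (where $p:G\to G/\overline{G'}$ is the open quotient) contains $\overline{G'}$, and using the normality of $\overline{G'}$ one checks directly that $HB$ is dense in $G$; the non-generating property of $B$ then forces $H$ dense, hence $\bar H$ dense (since $p$ is open), so $\bar B$ is non-generating in $G/\overline{G'}$. Second, I establish the following sublemma, which is the technical heart of the argument: \emph{any non-generating subgroup of a connected abelian Lie group $\mathbb{R}^n\times\mathbb{T}^m$ is finite}. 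Reinterpreting non-generation via characters, a subgroup $N$ is non-generating in $A$ if and only if $\chi(N)$ is not dense in the target for every non-trivial continuous homomorphism $\chi:A\to T$ with $T\in\{\mathbb{R},\mathbb{T}\}$. Testing against linear functionals to $\mathbb{R}$ forces the projection $\pi_{\mathbb{R}^n}(N)$ to be discrete in $\mathbb{R}^n$; then selecting characters to $\mathbb{T}$ which send a putative non-zero element of $\pi_{\mathbb{R}^n}(N)$ to an irrational angle rules out any non-trivial $\mathbb{R}^n$-projection, forcing $N\subseteq\{0\}\times\mathbb{T}^m$; finally, characters of $\mathbb{T}^m$ non-trivial on the identity component of $\overline{N}$ force $\dim\overline{N}=0$, so $N$ is finite.

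Given $\bar B$ finite, the continuous surjection $G/\overline{G'}\to(G/B)/(G/B)'$ between connected abelian Lie groups has finite (discrete) kernel and is therefore a finite covering. The concluding isomorphism $A\cong G/\overline{G'}$ is then immediate: both groups are connected abelian Lie groups admitting a finite cover onto the common group $A/\bar K$, and connected abelian Lie groups are classified up to isomorphism by their free and toral ranks, both of which are preserved by finite coverings.
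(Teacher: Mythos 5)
Your proof is correct, and the front half runs along essentially the same lines as the paper's: you compute $((S\times A)\ltimes_\rho V)' = S\ltimes V$, deduce closedness of $(G/B)'$ from the finiteness of $\ker\pi$, identify $(G/B)/(G/B)'\cong A/\bar K$ (the paper simply cites the general fact that a finite covering induces a finite covering of abelianisations), and observe $\phi(\overline{G'})=(G/B)'$ so that $G/\overline{G'}\to(G/B)/(G/B)'$ has kernel $\bar B$, the image of $B$ in $G/\overline{G'}$.

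Where you genuinely diverge is in showing $\bar B$ is finite. The paper first establishes that $\bar B$ is \emph{closed} in $G/\overline{G'}$ (since $\phi(\overline{G'})=(G/B)'$ is closed, $B\overline{G'}$ is closed in $G$, so $\bar B$ is closed), then invokes \cite[Lemma 2.4]{AN24} to see $\bar B$ is non-generating, concludes it is torsion, and finally uses closed + torsion $\Rightarrow$ finite. You bypass closedness entirely: you verify directly (and correctly) that $\bar B$ is non-generating in $G/\overline{G'}$ by pulling a putative witness back through the open quotient $p:G\to G/\overline{G'}$, and then prove a self-contained sublemma --- \emph{every} non-generating subgroup of a connected abelian Lie group is finite --- via the character criterion ($N$ non-generating $\Rightarrow$ $\chi(N)$ not dense for every non-trivial $\chi:A\to\mathbb{R}$ or $\mathbb{T}$), killing the $\mathbb{R}^n$-projection first and then the toral part. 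This sublemma is stronger than what the paper uses (it drops the closedness hypothesis) and your argument for it is sound; note that your intermediate discreteness step is actually redundant, since choosing $\lambda$ irrational on any nonzero $v\in\pi_{\mathbb{R}^n}(N)$ already contradicts non-generation. The paper's route is shorter because it leans on \cite{AN24}; yours is more self-contained and yields a reusable structural fact. For the final identification $A\cong G/\overline{G'}$ you argue both are connected abelian Lie groups finitely covering $A/\bar K$, and free/toral ranks are preserved by finite covers --- this is correct, and is what the paper tacitly means by ``in particular.''
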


\begin{proof}
Since $\rho$ admits no nonzero fixed vectors and $S$ is perfect,
a straightforward computations shows that the commutator subgroup
of $(S\times A)\ltimes_{\rho}V$ is $S\ltimes_{\rho}V$. In particular,
it is closed, so its image in $G/B$ is closed, since the map $(S\times A)\ltimes_{\rho}V\longrightarrow G/B$
is a closed map (being a finite covering). But its image is exactly
$(G/B)'$ (being an epimorphism), so $(G/B)'$ is closed. It is easy
to see in general that, if $f:H_{1}\to H_{2}$ is a finite covering,
then $(H_{1}/H_{1}')\longrightarrow(H_{2}/H_{2}')$ is a finite covering,
so $A\longrightarrow(G/B)/(G/B)'$ is a finite covering as well. 

The image of $G'$ in $G/B$ is equal to $(G/B)'$, which is closed,
so it is equal to the image of $\overline{G'}$ in $G/B$. The fact
the image of $\overline{G'}$ is closed in $G/B$ means that $B\overline{G'}$
is closed in $G$, which means that the image of $B$ in $G/\overline{G'}$
is closed. Now, by \cite[Lemma 2.4]{AN24}, the image of $B$ in $G/\overline{G'}$
is non-generating, so it is a torsion subgroup; being closed, it must
be finite. Since it is the kernel of the map $G/\overline{G'}\longrightarrow(G/B)/(G/B)'$,
we get that this map is a finite covering.
\end{proof}
\begin{thm}
\label{thm:main-thm}Let $G$ be a connected Lie group, $f:G\to H$
an open epimorphism. Let $f^{\mathrm{ab}}:G/\overline{G'}\to H/f\left(\overline{G'}\right)$
be the map induced by $f$. If $h_{1},\dots,h_{n}\in H$ are generators
with $n\geqslant\tr G$, then they admit generating lifts to $G$
via $f$ if and only if $h_{1}f\left(\overline{G'}\right),\dots,h_{n}f\left(\overline{G'}\right)$
admit generating lifts to $G/\overline{G'}$ via $f^{\mathrm{ab}}$. 

Thus, if in addition $\ker f$ is topologically-finitely-generated\footnote{Recall that $\ker f$ is automatically topologically-finitely-generated
if $H$ is Hausdorff.} and $n\geqslant2\dim(G/\overline{G'})-\dim T$ (where $T$ is the
maximal torus of the abelianisation $G/\overline{G'}$), then $h_{1},\dots,h_{n}$
admit generating lifts.
\end{thm}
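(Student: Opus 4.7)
The plan is to use the structural decomposition from \cite{AN24} stated above to reduce the general case to the Abels--Noskov case handled by Proposition \ref{prop:AN-case}. Concretely, I would start by invoking the theorem to obtain a non-generating subgroup $B\trianglelefteqslant G$, an Abels--Noskov group $\tilde{G}=(S\times A)\ltimes_{\rho}V$, and a finite covering map $\pi:\tilde{G}\to G/B$. By the lemma immediately preceding the statement, the abelianisation $A$ of $\tilde{G}$ is canonically identified with $G/\overline{G'}$, since both sit as finite covers of the common group $(G/B)/(G/B)'$. Note also that $B$ being non-generating forces $d(G)=d(G/B)$, and the AN24 rank formula (\thmref{AN-tr}) combined with the structure of $\pi$ gives $d(G/B)=d(\tilde{G})$; thus $n\geqslant d(G)$ yields $n\geqslant d(\tilde{G})$, which will be needed below.

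Next I would perform two successive reductions. Since $B$ is non-generating, \lemref{AbsGas} reduces the problem of lifting $h_1,\dots,h_n$ via $f$ to lifting their images via the induced map $\bar{f}:G/B\to H/f(B)$. Composing with the finite cover, consider the open epimorphism $\tilde{f}=\bar{f}\circ\pi:\tilde{G}\to H/f(B)$; any generating lifts via $\tilde{f}$ project under $\pi$ to generating lifts via $\bar{f}$, so it suffices to produce generating lifts in $\tilde{G}$ via $\tilde{f}$. Now apply Proposition \ref{prop:AN-case} to $\tilde{f}$: the problem reduces to lifting the images of the $h_i$ in $(H/f(B))/\tilde{f}(S\ltimes V)$ via the abelianisation map $\tilde{f}^{\mathrm{ab}}:A\to(H/f(B))/\tilde{f}(S\ltimes V)$. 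Under the canonical identifications (using that $\pi(S\ltimes V)=(G/B)'$, $\bar{f}((G/B)')=f(\overline{G'})f(B)/f(B)$, and $A\cong G/\overline{G'}$), the map $\tilde{f}^{\mathrm{ab}}$ is identified with $f^{\mathrm{ab}}:G/\overline{G'}\to H/f(\overline{G'})$ up to finite covers on source and target, so the hypothesis on $f^{\mathrm{ab}}$ yields the hypothesis required to apply Proposition \ref{prop:AN-case}. The reverse direction (lifts via $f$ imply lifts via $f^{\mathrm{ab}}$) is immediate by projection.

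For the final assertion, if $\ker f$ is topologically finitely generated then so is $\ker f^{\mathrm{ab}}$ (as a continuous image of $\ker f$), and the bound $n\geqslant 2\dim(G/\overline{G'})-\dim T$ is precisely $n\geqslant\sr{G/\overline{G'}}$ by \propref{sr-of-RnTm}. Thus Proposition \ref{prop:AbelianCase} applies directly to $f^{\mathrm{ab}}$, providing the required generating lifts to $G/\overline{G'}$, and the first half of the theorem converts these into generating lifts to $G$.

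The main obstacle is the diagram chase in the middle paragraph: carefully verifying that the lifting property transfers through both the non-generating quotient $G\to G/B$ and the finite cover $\tilde{G}\to G/B$, in particular that the two abelianisation lifting problems (for $f^{\mathrm{ab}}$ and for $\tilde{f}^{\mathrm{ab}}$) are equivalent in spite of the finite covers that intervene on source and target. This should boil down to the fact that finite central covers of connected abelian Lie groups preserve (and reflect) the existence of generating lifts, which can be handled by a short direct argument; a secondary bookkeeping point is the identification $d(\tilde{G})=d(G)$, which follows from the explicit formula of \thmref{AN-tr} applied to both $\tilde{G}$ and to any Abels--Noskov model of $G$ itself.
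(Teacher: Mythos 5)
Your proposal is correct and follows essentially the same route as the paper's own proof: invoke the Abels--Noskov decomposition of $G$ (non-generating $B$, finite cover by $(S\times A)\ltimes_\rho V$), reduce step by step using \lemref{AbsGas} (which is exactly the ``short direct argument'' you allude to — quotients by non-generating subgroups, and in particular finite covers, are absolutely Gasch\"utz), apply \propref{AN-case} to the Abels--Noskov model, and finish the final assertion via \propref{sr-of-RnTm} and \propref{AbelianCase}. The paper simply packages your chain of reductions into one explicit commutative diagram rather than carrying them out sequentially, but the ingredients and logical structure are the same.
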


\begin{rem}
In this case, it is not necessarily true that, if $H$ is Hausdorff,
then $H/f\left(\overline{G'}\right)$ must be Hausdorff as well.
\end{rem}

\begin{proof}
Fix a non-generating subgroup $B\trianglelefteqslant G$ such that
there is an Abels--Noskov group $(S\times A)\ltimes_{\rho}V$ and
a finite covering map $\pi:(S\times A)\ltimes_{\rho}V\to G/B$. Set
$\Delta=\ker f$, and consider the following commutative diagram:
\[
\xymatrix{G\ar[dddd]_{\vartheta}\ar[rd]^{\alpha}\ar[rr]^{f} &  & H\ar[dr]^{\bar{\alpha}}\ar[dddd]^{\bar{\vartheta}}\\
 & G/\overline{G'}\ar[dddd]_{q}\ar[rr]^{f^{\mathrm{ab}}} &  & H/f\left(\overline{G'}\right)\ar[dddd]\\
\\\\G/B\ar[rd]^{\beta}\ar[rr]^{\hat{f}} &  & H/f(B)\ar[dr]^{\bar{\beta}}\\
 & (G/B)/(G/B)'\ar[rr]^{\hat{f}^{\mathrm{ab}}} &  & (H/f(B))/\hat{f}((G/B)')\\
\\\\(S\times A)\ltimes_{\rho}V\ar[uuuu]^{\pi}\ar[rd]^{\gamma}\ar[rr]^{\tilde{f}} &  & \left((S\times A)\ltimes_{\rho}V\right)/\pi^{-1}(\vartheta(\Delta))\ar[uuuu]^{\bar{\pi}}\ar[dr]^{\bar{\gamma}}\\
 & A\ar[uuuu]^{p}\ar[rr]^{\tilde{f}^{\mathrm{ab}}} &  & A/\gamma(\pi^{-1}(\vartheta(\Delta)))\ar[uuuu]
}
\]
Observe that, even if $H$ is Hausdorff, it is possible that some
of the groups in the diagram are not (e.g., $H/f\left(\overline{G'}\right)$).

The maps $\alpha,\beta,\gamma$ are the topological abelianisation
maps -- the quotient by the closure of the commutator subgroup (which
happen to be equal to the abstract commutator subgroup in the case
of the latter two). The corresponding maps to their right, $\bar{\alpha},\bar{\beta},\bar{\gamma}$,
are abelianisation maps of sorts: quotients by subgroups that are
somewhere between the abstract commutator subgroup and its closure.

The leftmost vertical maps, $\pi,p,q,\vartheta$, are all absolutely
Gasch\"utz (as are in fact the rightmost vertical maps); the maps $\pi,p,q$
are even finite covering maps. We therefore get that the maps $f,\hat{f},\tilde{f},\tilde{f}^{\mathrm{ab}},\hat{f}^{\mathrm{ab}},f^{\mathrm{ab}}$
are all `Gasch\"utz equivalent', so $h_{1},\dots,h_{n}\in H$ admit
generating lifts via $f$ if and only if $h_{1}f\left(\overline{G'}\right),\dots,h_{n}f\left(\overline{G'}\right)$
admit generating lifts via $f^{\mathrm{ab}}$. 

To spell it out: let $h_{1},\dots,h_{n}\in H$ be generators. Since
$\vartheta$ is absolutely Gasch\"utz, they admit generating lifts to
$G$ via $f$ if and only if $h_{1}f(B),\dots,h_{n}f(B)$ admit generating
lifts to $G/B$ via $\hat{f}$. Take some arbitrary lifts $x_{1},\dots,x_{n}\in\left((S\times A)\ltimes_{\rho}V\right)/\pi^{-1}(\vartheta(\Delta))$
of $h_{1}f(B),\dots,h_{n}f(B)$; since $\pi:(S\times A)\ltimes_{\rho}V\to G/B$
is absolutely Gasch\"utz, we get that $x_{1},\dots,x_{n}$ admit generating
lifts to $(S\times A)\ltimes_{\rho}V$ via $\tilde{f}$ if and only
if $h_{1}f(B),\dots,h_{n}f(B)$ admit generating lifts to $G/B$ via
$\hat{f}$. Now, denote the images of $x_{1},\dots,x_{n}$ in $A/\gamma(\pi^{-1}(\vartheta(\Delta)))$
by $\bar{x}_{1},\dots,\bar{x}_{n}$; by \propref{AN-case}, $x_{1},\dots,x_{n}$
admit generating lifts to $(S\times A)\ltimes_{\rho}V$ via $\tilde{f}$
if and only if $\bar{x}_{1},\dots,\bar{x}_{n}$ admit generating lifts
to $A$ via $\tilde{f}^{\mathrm{ab}}$. Denote the images of $\bar{x}_{1},\dots,\bar{x}_{n}$
in $(H/f(B))/\hat{f}((G/B)')$ by $y_{1},\dots,y_{n}$. Then, since
$p$ is absolutely Gasch\"utz, $\bar{x}_{1},\dots,\bar{x}_{n}$ admit
generating lifts to $A$ via $\tilde{f}^{\mathrm{ab}}$ if and only
if $y_{1},\dots,y_{n}$ admit generating lifts to $G/B$ via $\hat{f}^{\mathrm{ab}}$.
By the commutativity of the diagram, $y_{1},\dots,y_{n}$ are the
images of $h_{1}f\left(\overline{G'}\right),\dots,h_{n}f\left(\overline{G'}\right)$
under the corresponding map, so, since $q$ is absolutely Gasch\"utz,
$y_{1},\dots,y_{n}$ admit generating lifts via $\hat{f}^{\mathrm{ab}}$
if and only if $h_{1}f\left(\overline{G'}\right),\dots,h_{n}f\left(\overline{G'}\right)$
admit generating lifts via $f^{\mathrm{ab}}$. 
\end{proof}
This completes the proof of Theorem \ref{thm:intro-main}, and of
its immediate consequence Theorem \ref{thm:intro-perfect-groups}.

\section{The Gasch\"utz Rank\label{sec:GasRank}}

Recall \defref{gaschutz-rank} of the Gasch\"utz rank of a group. Since
we reduced the problem of lifting generators to epimorphisms between
abelian groups (\thmref{intro-main}), and computed the Gasch\"utz rank
of connected abelian Lie groups (\thmref{into-abel}), it might seem
like we should immediately obtain a computation for the Gasch\"utz rank
of any connected Lie group. This problem, however, turns out to be
a little subtler than it first appears.

Let $f:G\to H$ be an epimorphism between connected Lie groups, and
let $f^{\mathrm{ab}}:G/\overline{G'}\to H/f(\overline{G'})$ be the
map induced by $f$. \thmref{intro-main} says that, if $h_{1},\dots,h_{n}\in H$
generate $H$, then they can be lifted to generators of $G$ through
$f$ if and only if $\bar{h}_{1},\dots,\bar{h}_{n}\in H/f(\overline{G'})$
(the images of $h_{1},\dots,h_{n}$ in $H/f(\overline{G'})$) can
be lifted to $G/\overline{G'}$ through $f^{\mathrm{ab}}$. The subtlety
lies in the fact $\ker f^{\mathrm{ab}}$ might not be closed. This
is why, in \propref{AbelianCase}, we had to allow not-necessarily-closed
subgroups (equivalently, non-Hausdorff target groups). This gives
us an upper bound on the Gasch\"utz rank of $G$. Namely, if $G$ is
any connected Lie group, then we have shown that

\[
\tr G\leqslant\Gas G\leqslant\max\left\{ \tr G,2\dim G/\overline{G'}-\dim T\right\} ,
\]
where $T$ is the maximal torus of $G/\overline{G'}$. If $G/\overline{G'}$
is non-compact, then $\Gas{G/\overline{G'}}=2\dim G/\overline{G'}-\dim T$,
which means that $\Gas G$ is actually equal to $\max\left\{ \tr G,2\dim G/\overline{G'}-\dim T\right\} $
(since the Gasch\"utz rank cannot increase in quotients). However, if
$G/\overline{G'}$ is compact, then $\Gas{G/\overline{G'}}=1$ (e.g.,
by \cite{CG18}), so $\Gas G$ might be strictly smaller than $\max\left\{ \tr G,2\dim G/\overline{G'}-\dim T\right\} $.

In most cases, however, the Gasch\"utz rank can be computed by our results:
\begin{enumerate}
\item If $G$ is compact, then we have $\Gas G=\tr G$ (by \cite{CG18}).
\item \label{enu:noncompact}If $G/\overline{G'}$ is non-compact, then
we have $\Gas G=\max\left\{ \tr G,2\dim G/\overline{G'}-\dim T\right\} $.
\item If $\tr G\geqslant2\dim G/\overline{G'}-\dim T$ (for example, if
$G$ is perfect), then $\Gas G=\tr G$.
\item If every Hausdorff quotient of $G$ satisfies the condition that its
abstract commutator subgroup is closed,\footnote{For instance, every Abels--Noskov group satisfies this condition
(and hence every group which is Abels--Noskov up to finite coverings),
by \propref{AN-case}.} then $\Gas G=\max\{\tr G,\Gas{G/G'}\}$. This is because, in this
case, $f(G')=H'$ for every open epimorphism, which means that the
kernel of $f^{\mathrm{ab}}:G/G'\to H/H'$ is closed. 
\end{enumerate}
We are left with the case $G$ is non-compact, $G/\overline{G'}$
is a torus with $\dim(G/\overline{G'})>\tr G$, and $f:G\to H$ is
a an open epimorphism onto a group $H$ such that $H'$ is not closed.


\begin{thebibliography}{Luc13.1}
\bibitem[AN24]{AN24}Abels, Herbert, and G. A. Noskov. \textquotedblleft The
Frattini subgroup of a Lie group and the topological rank of a Lie
group.\textquotedblright{} Journal of Algebra 640 (2024): 326-367. 

\bibitem[BCL22]{BCL22}Bleher, Frauke M., Ted Chinburg, and Alexander
Lubotzky. \textquotedblleft On representations of Gal (Q\LyXoverline /Q),
GT\^{ } and Aut (F\^{ }2).\textquotedblright{} Journal of algebra
607 (2022): 134-159.

\bibitem[BG03]{BG03}Breuillard, Emmanuel, and Tsachik Gelander. \textquotedblleft On
dense free subgroups of Lie groups.\textquotedblright{} \emph{Journal
of Algebra} 261.2 (2003): 448-467.

\bibitem[BL22]{BL22}Bader, Uri, and Elyasheev Leibtag. \textquotedblleft Homomorhic
images of algebraic groups.\textquotedblright{} \emph{Journal of Algebra}
656 (2024): 77-117.

\bibitem[CC02]{CC02}Cameron, Peter J., and Philippe Cara. \textquotedblleft Independent
generating sets and geometries for symmetric groups.\textquotedblright{}
Journal of Algebra 258.2 (2002): 641-650.

\bibitem[CG18]{CG18}Cohen, Tal, and Tsachik Gelander. \textquotedblleft Gasch\"utz
lemma for compact groups.\textquotedblright{} \emph{Journal of Algebra}
498 (2018): 254-262.

\bibitem[DS91]{DS91}Remus, Dieter, and Luchezar Stojanov. \textquotedblleft Complete
minimal and totally minimal groups.\textquotedblright{} \emph{Topology
and its Applications} 42.1 (1991): 57-69.

\bibitem[EG16]{EG16}Egri-Nagy, Attila, and Volker Gebhardt. \textquotedblleft Computational
enumeration of independent generating sets of finite symmetric groups.\textquotedblright{}
arXiv preprint arXiv:1602.03957 (2016).

\bibitem[Gas55]{Gas55}Gasch\"utz, Wolfgang. \textquotedblleft Zu einem
von BH und H. Neumann gestellten Problem.\textquotedblright{} Mathematische
Nachrichten 14.4-6 (1955): 249-252.

\bibitem[Gel24]{Gel24}T.~Gelander, ${\rm Aut}(F_{n})$ actions on
representation spaces, J. Algebra \{${\bf 656}$\} (2024), 206--225;
MR4759502.

\bibitem[GL09]{GL09}Grunewald, Fritz, and Alexander Lubotzky. \textquotedbl Linear
representations of the automorphism group of a free group.\textquotedbl{}
Geometric and Functional Analysis 18 (2009): 1564-1608.

\bibitem[GM17]{GM17}Gelander, Tsachik, and François Le Maître. \textquotedblleft Infinitesimal
topological generators and quasi non-archimedean topological groups.\textquotedblright{}
Topology and its Applications 218 (2017): 97-113.

\bibitem[Got69]{Got69}Goto, Morikuni. \textquotedblleft On an arcwise
connected subgroup of a Lie group.\textquotedblright{} Proc. Amer.
Math. Soc. 20 (1969), 157-162.

\bibitem[HHR07]{HHR07}Halbeisen, Lorenz, Martin Hamilton, and Pavel
R\r{u}¸i\v{c}ka. \textquotedblleft Minimal generating sets of groups,
rings, and fields.\textquotedblright{} Quaestiones Mathematicae 30.3
(2007): 355-363.

\bibitem[Har23]{Har23}Harper S. The maximal size of a minimal generating
set. Forum of Mathematics, Sigma. 2023;11:e70. doi:10.1017/fms.2023.71

\bibitem[Hum95]{Hum95}Humphreys, James E. \emph{Conjugacy classes
in semisimple algebraic groups}. No. 43. American Mathematical Soc.,
1995.

\bibitem[Lub01]{Lub01}Lubotzky, Alexander. \textquotedblleft Pro-finite
presentations.\textquotedblright{} Journal of algebra 242.2 (2001):
672-690.

\bibitem[Lub11]{Lub11}A. Lubotzky, Dynamics of ${\rm Aut}(F_{N})$
actions on group presentations and representations, in \emph{Geometry,
rigidity, and group actions}, 609-{}-643, Chicago Lectures in Math.,
Univ. Chicago Press, Chicago, IL, ; MR2807845

\bibitem[Luc13.1]{Luc13.1}Lucchini, Andrea. \textquotedblleft The
largest size of a minimal generating set of a finite group.\textquotedblright{}
Archiv der Mathematik 101.1 (2013).

\bibitem[Luc13.2]{Luc13.2}Lucchini, Andrea. \textquotedblleft Minimal
generating sets of maximal size in finite monolithic groups.\textquotedblright{}
Archiv der Mathematik 101 (2013): 401-410.

\bibitem[LMS00]{LMS00}Lucchini, Andrea, Mariapia Moscatiello, and
Pablo Spiga. \textquotedblleft Bounding the maximal size of independent
generating sets of finite groups.\textquotedblright{} Proceedings
of the Royal Society of Edinburgh Section A: Mathematics 151.1 (2021):
133-150.

\bibitem[LS12]{LS12}Lubotzky, Alexander, and Dan Segal. Subgroup
growth. Vol. 212. Birkhäuser, 2012.

\bibitem[Min13]{Min13}Y.~N.~Minsky, On dynamics of $\mathrm{Out}(F_{n})$
on $\mathrm{PSL}_{2}(\mathbb{C})$ characters, Israel J. Math. \{${\bf 193}$\}
(2013), no.~1, 47--70; MR3038545.

\bibitem[Pak01]{Pak01}I. Pak, What do we know about the product replacement
algorithm?, in \emph{Groups and computation, III (Columbus, OH, 1999)},
301-{}-347, Ohio State Univ. Math. Res. Inst. Publ., 8, de Gruyter,
Berlin, ; MR1829489

\bibitem[Rag72]{Rag72}Ragozin, David L. \textquotedblleft A normal
subgroup of a semisimple Lie group is closed.\textquotedblright{}
\emph{Proceedings of the American Mathematical Society }32.2 (1972):
632-633.

\bibitem[Whi00]{Whi00}Whiston, Julius. \textquotedblleft Maximal
independent generating sets of the symmetric group.\textquotedblright{}
Journal of Algebra 232.1 (2000): 255-268.

\end{thebibliography}
\end{document}